\definecolor{gray}{rgb}{0.25, 0.25, 0.25}
\newtheorem{theorem}{Theorem}[section]
\newtheorem{lemma}[theorem]{Lemma}
\newtheorem{claim}[theorem]{Claim}
\newtheorem{corollary}[theorem]{Corollary}
\newtheorem{problem}[theorem]{Problem}
\newenvironment{definition*}
  {
   \innerdefinition}
  {\endinnerdefinition}
\theoremstyle{definition}
\newtheorem{defn}[theorem]{Definition}
\newtheorem*{defn*}{Definition}
\newtheorem*{observation*}{Observation}
\theoremstyle{remark}
\title{Strong spectral stabilities for $C_{2k+1}$-free graphs}
\author{
Lantao Zou\thanks{School of Mathematics, Hunan University, Changsha, Hunan, 410082, China.
Email: \url{zoulantao123@163.com}. }
\and
Yongtao Li\thanks{Corresponding author, Yau Mathematical Sciences Center, Tsinghua University, Beijing, China. Email: \url{yongtao_li@mail.tsinghua.edu.cn}. Supported by the Shuimu Scholar Program at Tsinghua University.}
\and
Yuejian Peng\thanks{School of Mathematics, Hunan University, Changsha, Hunan, 410082, China.
Email: \url{ypeng1@hnu.edu.cn}.
Supported by the NSF of Hunan Province (No. 2025JJ30003) and the NSFC grant (No. 11931002).}
}
\date{\today}
\begin{document}
\maketitle

\begin{abstract}
    The Tur\'{a}n-type problem for odd cycles is a classical problem in extremal graph theory.
    F\"{u}redi and Gunderson [Combin. Probab. Comput. 24 (4) (2015)] proved that if $k\ge 2$ and $n\ge 4k-2$, then every $n$-vertex $C_{2k+1}$-free graph contains at most $\lfloor n^2/4 \rfloor$ edges. Recently, a stability result due to  Ren, Wang, Wang and Yang [SIAM J. Discrete Math. 38 (2024)] shows that if $3\le r \le 2k$ and $n\ge 318 (r-2)^2k$, and $G$ is a $C_{2k+1}$-free graph on $n$ vertices with $e(G)\ge \lfloor {(n-r+1)^2}/{4}\rfloor +{r \choose 2}$, then $G$ can be made bipartite by deleting at most $r-2$ vertices. Using a different method, we give a linear bound on $n$ in terms of $k$ and show a stronger structural result, which roughly says that $G$ can be obtained from a large bipartite graph by suspending some small graphs that the total number of vertices is at most $r-2$.  This improves a result of Yan and Peng (2024) by weakening the requirement on $n$ and $k$.
    As a direct corollary, we obtain a tight upper bound  on the size of an $n$-vertex $C_{2k+1}$-free graph with chromatic number $\chi (G)\ge r$ for every $r\le 2k$.
   This corollary can be viewed as an analogue of a result of Erd\H{o}s (1955) and another result of Ren, Wang, Wang and Yang (2024) for triangle-free graphs with chromatic number at least $3$ and $4$, respectively.

    The second part of this paper concerns the spectral extremal problem for  $C_{2k+1}$-free graphs.
     We denote by $\lambda (G)$ the spectral radius of the adjacency matrix of a graph $G$.
    Let $T_{n-r+1,2}\circ K_r$ be the graph obtained by identifying a vertex of the complete graph $K_r$ and a vertex of the smaller partite set of the bipartite Tur\'{a}n graph $T_{n-r+1 ,2}$. Using the spectral techniques,
    we prove that if $3\le r\le 2k$ and $n\ge 712k$, and $G$ is an $n$-vertex $C_{2k+1}$-free graph with chromatic number $\chi (G) \ge r$, then $\lambda (G)\le \lambda (T_{n-r+1,2}\circ K_r)$, where the equality holds if and only if $G=T_{n-r+1,2}\circ K_r$.
    Our result not only extends a result of Guo, Lin and Zhao [Linear Algebra Appl. 627 (2021)] as well as a result of Zhang and Zhao [Discrete Math. 346 (2023)], but also provides the first solution to the spectral extremal problem for $F$-free graphs with high chromatic number. This is a significant advancement beyond previous work limited to non-bipartite graphs, as it gives a spectral condition to guarantee that a graph is structurally $(r-1)$-partite.

   The key ingredient in our approach is to show the existence of  a large  $2$-connected bipartite induced subgraph  with `nice' properties which allows us to have a clear picture on structures outside the block containing this $2$-connected bipartite induced subgraph. Our approach somehow provides a unified proof for both edge stability and spectral stability of $C_{2k+1}$-free graphs.

\end{abstract}

{{\bf Key words:}   Extremal graph theory; Spectral radius; Stability; Odd cycle; Non-bipartite. }

{{\bf 2010 Mathematics Subject Classification.}  05C35; 05C50.}

\section{Introduction}

 We follow the standard notations. For a simple undirected graph $G$, let $V(G)$, $E(G)$, $e(G)$, $\delta(G)$ and $\chi(G)$  denote the  vertex set of $G$,  the  edge set of  $G$, the size of $E(G)$, the minimum degree of $G$ and the chromatic number of $G$ respectively. A graph $G$ is called {\it $F$-free}  if it does not contain a subgraph isomorphic to $F$.
  For example, every bipartite graph is triangle-free.
 The {\em Tur\'{a}n number} of $F$, denoted by $\mathrm{ex}(n, F)$, is defined as the maximum number of edges
  in an $n$-vertex $F$-free graph.
  An $F$-free graph with the maximum number of edges is called an {\em extremal graph} for $F$.
 Let $T_{n,r}$ denote the complete $r$-partite graph on $n$ vertices
 whose part sizes are as equal as possible.
 The famous Tur\'{a}n theorem \cite{Turan41} states that
if $G$ is an $n$-vertex $K_{r+1}$-free graph,
then $e(G)\le e(T_{n,r})$,
and the equality holds if and only if $G$ is an $r$-partite Tur\'{a}n graph $T_{n,r}$.
The Tur\'{a}n theorem implies that
for every $K_{r+1}$-free graph $G$,
\begin{equation} \label{eq-Turan}
e(G)\le \left(1-\frac{1}{r} \right) \frac{n^2}{2}.
\end{equation}
Many alternative proofs of the Tur\'{a}n theorem can be found in the literature;
see \cite[pp. 269--273]{AZ2014} and \cite[pp. 294--301]{Bollobas78}.
There are various extensions of the Tur\'{a}n theorem; see, e.g., \cite{BT1981,Bon1983}.
The {\it chromatic number} of a graph $F$, denoted by $\chi (F)$, is the minimum number of colors required to color the vertices of $F$ such that no two adjacent vertices share the same color.
The  celebrated extension of
 Erd\H{o}s, Stone and Simonovits \cite{ES46,ES66} states that
 if $F$ is a graph with $\chi (F)=r+1$, then
 \[ \mathrm{ex}(n,F) =
 \left( 1- \frac{1}{r} + o(1) \right)\frac{n^2}{2}. \]
 This provides an asymptotic estimate for the Tur\'{a}n number of non-bipartite graphs.
 We refer the readers to the comprehensive surveys \cite{FS13,Sim2013} for related results on Tur\'{a}n type problems.

In 1966,
Erd\H{o}s \cite{Erd1966Sta1,Erd1966Sta2} and Simonovits \cite{Sim1966} proved
a  structural theorem, which shows that
the Tur\'{a}n problem exhibits a certain stability phenomenon.
The Erd\H{o}s--Simonovits stability says that
for any $\varepsilon >0$ and any graph $F$ with $\chi (F)=r+1$, there exist $n_0$ and $\delta >0$ such that
if $G$ is an $F$-free graph on $n\ge n_0$ vertices with
\[ e(G)\ge \left(1- \frac{1}{r} - \delta \right)\frac{n^2}{2}, \]
then $G$ can be made $r$-partite by removing at most $\varepsilon n^2$ edges.
This stability theorem has become a powerful tool for solving the Tur\'{a}n numbers of some classical graphs; see \cite[Section 5]{Kee2011}.
In 2015,  F\"{u}redi \cite{Fur2015} proved a more precise  stability result for cliques, which states that
if $G$ is an $n$-vertex $K_{r+1}$-free graph with
$e(G)\ge e(T_{n,r}) - t$ edges, then $G$ can be made $r$-partite by removing at most $t$ edges.
This result also gives a {\it precise linear dependency} of $\delta $ in terms of $\varepsilon$, and drops the condition that the order $n$ is sufficiently large.
 Further stability results for cliques have been further well-studied in the past few years;  see \cite{RS2018,BCLLP2021,KRS2021} and the references therein.

\subsection{Strong edge stability  for $C_{2k+1}$-free graphs}

In the study of graph theory, cycles are an extremely important class of graphs. The study of cycles plays a fundamental role in both structural graph theory and extremal graph theory; see the survey \cite{Ver2016} and the recent progress \cite{LM2023,GCS2024,CJMM2025}.
In this paper, we focus mainly on the study of extremal graph problems and stabilities for odd cycles.
The Tur\'{a}n number $\mathrm{ex}(n,C_{2k+1})$ could be determined by the celebrated stability method \cite{Sim1966} and \cite[Sec. 5]{Kee2011}.
However, the stability method works only for sufficiently large $n$. In fact, the value of $\mathrm{ex}(n,C_{2k+1})$ can be extracted from the works of Bondy \cite{Bon1971jctb,Bon1971dm}, Woodall \cite{Woo1972} and Bollob\'{a}s \cite[pp. 147--156]{Bollobas78}.
Later, F\"{u}redi and Gunderson \cite{FG2015cpc} proved that
    for every $k\ge 2$ and $n\ge 4k-2$,
    \[ \mathrm{ex}(n,C_{2k+1})= \left\lfloor \frac{n^2}{4} \right\rfloor . \]
    Moreover, for $n\ge 4k$, the unique extremal graph is the bipartite Tur\'{a}n graph $T_{n,2}$.

A stability result for odd cycles was proved by Li, Feng and Peng \cite[Theorem 4.8]{LFP2024-triangular}. It states that for every $k\ge 1$ and $0<\varepsilon < \frac{1}{2}$,
if $G$ is a $C_{2k+1}$-free graph on ${n\ge {2k}/{\varepsilon}}$ vertices
with $e(G)\ge (1 - 2{\varepsilon} ) \frac{n^2}{4}$,
then $G$ can be made bipartite by deleting at most $\varepsilon n^2$ edges.
In 2024, Ren, Wang, Wang and Yang \cite{RWWY2024-SIAM} established a  stability {when $e(G)$ is quite close to $e(T_{n,2})$}, i.e., within $O(n)$ of $e(T_{n,2})$, by showing that if $k\ge 2$ and $G$ is a $C_{2k+1}$-free graph with $e(G)\ge
\lfloor \frac{(n-r+1)^2}{4} \rfloor + {r \choose 2}$, where $3\le r\le 2k$, then $G$ can be made bipartite by removing at most ${r^2}/{4}$ edges.
Moreover, they proved a stability result by removing a few vertices to make the graph $G$ bipartite.

\begin{theorem}[Ren, Wang, Wang and Yang \cite{RWWY2024-SIAM}] \label{thm-RWWY}
Let $n,k,r$ be integers with $k\ge 2, 3\le r\le 2k$ and $n\ge 318 (r-2)^2k$. If $G$ is a $C_{2k+1}$-free graph on $n$ vertices with
\[ e(G)\ge \left\lfloor \frac{(n-r+1)^2}{4} \right\rfloor + {r \choose 2}, \]
then $G$ can be made bipartite by {deleting at most $r-2$ vertices}. The extremal graph is obtained from $T_{n-r+1,2}$ and $K_r$ by sharing exactly one vertex.
\end{theorem}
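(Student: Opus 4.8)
The plan is to argue by strong induction on $r$, peeling off one vertex of small degree at a time. Write $g(n,r)=\floor{(n-r+1)^2/4}+\binom{r}{2}$ for the threshold in the hypothesis and record the elementary identity $g(n,r)-g(n-1,r-1)=\binom{r}{2}-\binom{r-1}{2}=r-1$, which is the arithmetic engine of the induction. If $G$ has a vertex $v$ with $\deg(v)=\delta(G)\le r-1$, then $G-v$ is $C_{2k+1}$-free on $n-1$ vertices with $e(G-v)\ge g(n,r)-(r-1)=g(n-1,r-1)$; since $n-1\ge 318(r-3)^2k$ and $r-1\le 2k$, the induction hypothesis applies with parameter $r-1$, so $G-v$ becomes bipartite after deleting at most $r-3$ vertices, and adjoining $v$ to that set settles $G$ with at most $r-2$ deletions. (For $r=3$ the step lands on the parameter-$2$ base case, namely that a non-bipartite $C_{2k+1}$-free graph has at most $\floor{(n-1)^2/4}$ edges, which is itself the $r=2$ instance of the claim below, proved by a secondary induction on $n$ in which pendant and isolated vertices — never lying on an odd cycle — are stripped off freely.) Thus the whole theorem reduces to a single \emph{Key Claim}: if $G$ is $C_{2k+1}$-free with $e(G)\ge g(n,r)$ and $\delta(G)\ge r$, then $G$ is bipartite.

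The engine for the Key Claim is a path-extension argument. Fix a maximum cut $(A,B)$; by the edge-removal form of the stability stated above (or directly from the density bound of Füredi--Gunderson) at most $r^2/4$ edges lie inside $A$ or inside $B$, which I call the \emph{bad edges}, and by maximality every vertex sends at least half of its edges across the cut. The decisive observation is that, relative to the bipartition, a cycle is odd if and only if it uses an odd number of bad edges; hence a single bad edge $uv$ with $u,v\in A$, together with a crossing $u$--$v$ path of even length $2k$, closes up to a forbidden $C_{2k+1}$. The crossing graph is within $O(rn)$ edges of the complete bipartite graph $K_{|A|,|B|}$, so all but $O(r)$ vertices have cross-degree at least $n/2-o(n)$ and form a near-complete bipartite \emph{core}. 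Because $\deg(u),\deg(v)\ge r$, each of $u,v$ has at least $r/2$ cross-neighbours, enough to reach a core vertex; inside the core one greedily builds a length-$2k$ alternating path from $u$ to $v$, avoiding the $O(r^2)$ endpoints of bad edges and the at most $2k$ vertices already used. The hypothesis $n\ge 712k$ (and $n\ge 318(r-2)^2k$) guarantees the room for this routing, so a bad edge cannot exist and $G$ is bipartite, proving the Key Claim.

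With the Key Claim in hand the induction closes immediately: in the case $\delta(G)\ge r$ the graph is already bipartite, needing zero deletions, while in the case $\delta(G)\le r-1$ we peel a minimum-degree vertex as above. For the equality characterisation one tracks equality through the recursion. Equality in $e(G-v)\ge g(n-1,r-1)$ at each peeling step forces $\deg(v)=r-1$ and forces $G-v$ to be extremal for the parameter $r-1$; the peeled vertex is therefore joined to a clique, and unwinding the recursion reconstructs a $K_r$ that shares exactly one vertex with a balanced complete bipartite graph, i.e. $T_{n-r+1,2}\circ K_r$ in the notation introduced later, confirming uniqueness.

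The main obstacle is the path-extension step of the second paragraph: producing a crossing path of length \emph{exactly} $2k$ while simultaneously entering the core from both endpoints of a low-degree bad edge and dodging all other bad-edge endpoints and already-visited vertices. Making the numerology precise — bounding the cross-edge deficiency from complete bipartite by roughly $(r-1)n/2$, deducing that only $O(r)$ vertices fall short of cross-degree $\approx n/2$, and checking that the greedy extension never stalls — is exactly where the linear-in-$k$ lower bound on $n$ is consumed, and it is the step most sensitive to the constants. A secondary difficulty is that, unlike the triangle case, one cannot simply invoke a minimum-degree-forces-bipartiteness theorem for the appendage, since sparse non-bipartite $C_{2k+1}$-free graphs on arbitrarily many vertices exist (for instance a long even cycle with a single short chord); it is the \emph{density} of $G$, routing the odd cycle through the near-complete bipartite core rather than through the appendage, that supplies the contradiction.
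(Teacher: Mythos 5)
Your reduction is attractive and the arithmetic behind it is sound: the identity $g(n,r)-g(n-1,r-1)=r-1$ does let you peel a vertex of degree at most $r-1$ and descend to parameter $r-1$, so the whole theorem really does reduce to your Key Claim (minimum degree $\ge r$ plus the edge bound forces bipartiteness). The fatal problem is the proof of the Key Claim itself, specifically the step ``each of $u,v$ has at least $r/2$ cross-neighbours, enough to reach a core vertex.'' This is a pigeonhole that cannot be made to work at any threshold. The cross-edge deficiency budget is about $(r-1)n/2$, so the number of vertices whose cross-degree falls below $|A|-T$ can only be bounded by roughly $(r-1)n/(2T)$; making this smaller than $r/2$ forces $T>(r-1)n/r$, at which point ``core'' is a vacuous condition. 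In other words, the deficiency may be concentrated on $\Theta(r)$ vertices of essentially zero cross-degree, and \emph{all} $r/2$ cross-neighbours of a bad-edge endpoint may be such vertices, so the greedy routing never gets started. This failure mode is realized by a concrete configuration: glue a clique $K_{r+1}$ (take $r\le 2k-1$) at a single vertex to a complete bipartite graph. This graph is $C_{2k+1}$-free, non-bipartite, has minimum degree exactly $r$, and in its maximum cut every bad edge lies inside the clique pocket, where every vertex has cross-degree $O(r)\ll n/2$. The only reason this is not a counterexample to the Key Claim is that it has roughly $n/2$ fewer edges than $g(n,r)$ --- and \emph{proving} that any such pocket costs too many edges is exactly the edge-counting analysis your proposal never performs. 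That counting (structured as a case analysis on the length of the path connecting the bad structure to the dense bipartite core) is the heart of the matter; it is what the paper does in Cases 2--5 of Section \ref{sec-5} when proving its stronger Theorem \ref{thm-chromatic-edges-version} (the paper does not prove Theorem \ref{thm-RWWY} directly, but derives it from that structural result, built on Lemmas \ref{lem-k-dense-bi-sub} and \ref{lem-bad-for-H}), and it is what the cited proof of Ren--Wang--Wang--Yang must also contain. You flag this step as ``the main obstacle,'' but flagging it is not the same as closing it, and the lower bound on $n$ does not help: the obstruction is a ratio of quantities both of order $r$, independent of $n$.

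Two smaller points. First, your bound of $r^2/4$ on the number of bad edges is itself the edge-removal stability theorem of the same Ren--Wang--Wang--Yang paper; it does not follow ``directly from the density bound of F\"{u}redi--Gunderson'' (the Tur\'{a}n number gives no upper bound on $e(A)+e(B)$ for a maximum cut), so your argument either imports a theorem from the very source being proved or contains a second gap of the same nature as the first. Second, the equality characterisation is under-argued: if $G-v=T_{n-r+2,2}\circ K_{r-1}$ and $d(v)=r-1$, nothing in your recursion forces $v$ to attach to the clique rather than to one side of the bipartite part (the latter yields a graph with exactly $g(n,r)$ edges that is not $T_{n-r+1,2}\circ K_r$); one must additionally use that such a graph needs only $r-3$ deletions, which requires a more careful statement of what is being tracked through the induction.
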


Theorem \ref{thm-RWWY} was recently studied by Wang and Wang \cite{WW2025} under the condition $3\le r \le 2k$ and $n\ge 2(r+1)(r+2)(r+2k)$.  {Yan and Peng \cite{Yan-Peng2024} introduced a new concept `strong-$2k$-core' and provided a concise proof to  obtain a stronger structural stability for $r\le 2k-4$ and $n\ge 20(r+2)^2k$}.  We say that a graph $G$ is suspended on a graph $B$ if $G$ intersects with $B$ in exactly one vertex, see Figure \ref{Fig-Extremal-sus}.

 \begin{figure}[H]
\centering
\includegraphics[scale=0.8]{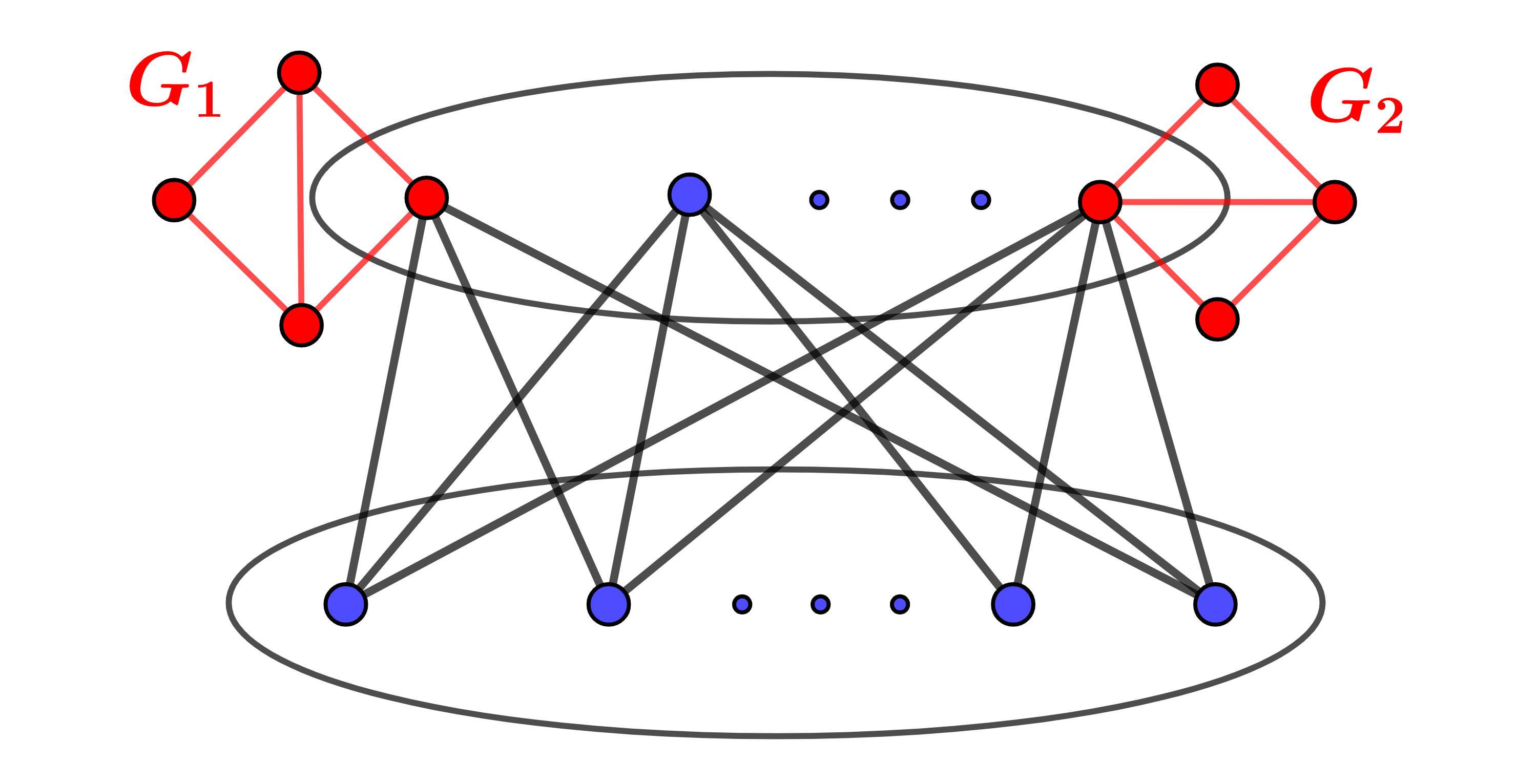}
\caption{A bipartite graph $B$ suspending $G_1$ and $G_2$.}
\label{Fig-Extremal-sus}
\end{figure}

\begin{defn*}
    Let \( \mathcal{G}_{n,r} \) be the family of \( n \)-vertex graphs that are obtained from a bipartite graph \( B \) by suspending graphs \( G_1, G_2,\ldots, G_s \) for some $s\in \mathbb{N}$, where \( \sum_{i=1}^s |V(G_i) \setminus V(B)|\le r-2 \).
\end{defn*}

\begin{theorem}[Yan and Peng \cite{Yan-Peng2024}]\label{thm-YP}
Let $r\ge 3$,  $2k\ge r+4$ and $n\ge 20(r+2)^2k$ be integers. Suppose that $G$ is an $n$-vertex $C_{2k+1}$-free graph with $e(G)\ge \big\lfloor{\frac{(n-r+1)^2}{4}}\big\rfloor+{r\choose 2} $.    Then $G\in \mathcal{G}_{n,r}$, unless $G$ is obtained from the bipartite Tur\'{a}n graph $T_{n-r+1,2}$ by suspending a clique $K_r$.
\end{theorem}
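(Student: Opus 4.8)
The plan is to reduce the statement to an exact edge count performed on a clean structural decomposition of $G$. I first record the arithmetic that makes the bound sharp: writing $f(m):=\floor{(n-m)^2/4}+\binom{m+1}{2}$, the threshold $\floor{(n-r+1)^2/4}+\binom{r}{2}$ equals $f(r-1)$, which is exactly $e(T_{n-r+1,2}\circ K_r)$. A short computation shows that $f$ is strictly decreasing for $0\le m\le r-1$ once $n\ge 20(r+2)^2k$. So if I can show that $G$ consists of a large bipartite piece on $n-m$ vertices together with hanging pieces carrying $m$ extra vertices, then $e(G)\le f(m)$, and the hypothesis $e(G)\ge f(r-1)$ forces $m\le r-1$, with the two cases $m\le r-2$ and $m=r-1$ producing respectively membership in $\mathcal{G}_{n,r}$ and the exceptional clique-suspension.

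To build the decomposition I would first extract a robust bipartite skeleton. Since $e(G)$ is within $O(rn)$ of $\ex(n,C_{2k+1})=\floor{n^2/4}$ and $G$ is $C_{2k+1}$-free, a stability estimate in the spirit of the Li--Feng--Peng bound quoted above shows $G$ is close to bipartite. I then iteratively discard vertices whose degree into the dominant bipartite part falls below about $k+1$ (the ``strong $2k$-core'' clean-up); because $e(G)$ is so large, only a negligible number of vertices are lost, so the surviving subgraph contains all but $o(n)$ vertices and can be taken to be bipartite, $2$-connected, and of minimum degree at least roughly $k+1$. The two facts I really need are: a Füredi--Gunderson-type lemma, that a $2$-connected non-bipartite graph on at least $2k+2$ vertices with minimum degree at least $k+1$ contains $C_{2k+1}$; and a path-flexibility lemma, that in a large $2$-connected bipartite graph of minimum degree at least $k+1$ any two vertices are joined by paths of every admissible length throughout a long interval. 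Securing an interval long enough to hit length exactly $2k+1$ is where the hypotheses $2k\ge r+4$ and $n\ge 20(r+2)^2k$ are spent.

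Let $B$ denote the block of $G$ that contains this core. I claim $B$ is bipartite and that every vertex outside $B$ hangs off a single cut vertex. For the first claim, if $B$ were non-bipartite it would carry an odd cycle $O$; since $B$ is $2$-connected, two internally disjoint paths join $O$ to the bipartite core, and the path-flexibility lemma lets me splice in a core path of the length and parity needed to close up a cycle of length exactly $2k+1$, a contradiction. Hence $B$ is bipartite. For the second claim, any piece meeting $B$ in two distinct vertices $x,y$ would lie in $B$'s block and hence in $B$ itself; were such a piece to attach inconsistently with the bipartition it would create an odd cycle and, by the same splicing argument, a $C_{2k+1}$. Thus all genuinely extra vertices lie in blocks attached to $B$ through a single cut vertex, which is precisely the suspension picture: $G$ is obtained from the bipartite graph $B$ by suspending the hanging blocks $G_1,\dots,G_s$.

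It remains to count. With $m$ the number of vertices outside $B$ we have $\abs{V(B)}=n-m$, and since blocks are edge-disjoint, $e(G)\le\floor{(n-m)^2/4}+\sum_i\binom{m_i+1}{2}$, where $m_i$ counts the extra vertices in the $i$-th hanging block and $\sum_i m_i=m$. A crude a priori estimate (each hanging block being $C_{2k+1}$-free) confines $m$ to a range in which every block has at most $2k$ vertices, so each is $C_{2k+1}$-free even when complete; convexity then gives $\sum_i\binom{m_i+1}{2}\le\binom{m+1}{2}$ and hence $e(G)\le f(m)$. As noted, this forces $m\le r-1$. If $m\le r-2$ then $G\in\mathcal{G}_{n,r}$ and we are done. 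If $m=r-1$, equality must propagate through every inequality: the bipartite base attains $\floor{(n-r+1)^2/4}$ edges and so equals $T_{n-r+1,2}$, while $\sum_i\binom{m_i+1}{2}=\binom{r}{2}$ with $\sum_i m_i=r-1$ forces a single hanging block equal to $K_r$, i.e.\ $G=T_{n-r+1,2}\circ K_r$. I expect the main obstacle to be the path-flexibility lemma and the extraction of the $2$-connected bipartite core of large minimum degree: once $B$ is simultaneously large, $2$-connected, bipartite and rich in paths of prescribed length and parity, ruling out two-vertex attachments and the final edge count are comparatively routine, but the entire scheme collapses without such a $B$.
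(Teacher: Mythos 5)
Your plan follows the same skeleton as the paper's proof of its Theorem~\ref{thm-chromatic-edges-version} (which subsumes the stated result): extract a large $2$-connected bipartite core with path-flexibility, show the block containing it is bipartite, then close with the monotone count $f(m)=\lfloor (n-m)^2/4\rfloor+\binom{m+1}{2}$, whose equality analysis you handle correctly. But the two ``facts you really need'' are both false at the degree threshold $k+1$ you propose, and this is fatal. For the Füredi--Gunderson-type lemma: the balanced blow-up of $C_{2k+3}$, with each vertex replaced by an independent set of size $\lceil (k+1)/2\rceil$, is $2$-connected, non-bipartite, has minimum degree at least $k+1$ and far more than $2k+2$ vertices, yet every odd cycle in it has length at least $2k+3$, so it is $C_{2k+1}$-free. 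For path-flexibility: the analogous blow-up of a long even cycle $C_{2N}$ is bipartite, $2$-connected, of minimum degree at least $k+1$, but vertices in antipodal parts are joined only by paths of length at least $N$, so there is no flexibility at lengths up to $2k+2$ at all; the issue is not the length of the interval (where you propose to spend the hypotheses on $n$ and $k$) but that degree $\sim k$ gives no flexibility whatsoever. This is exactly why the paper works at minimum degree $>\tfrac{2}{5}n$, a constant fraction of $n$: bipartiteness of the core $F$ comes from the weakly pancyclic theorem (Lemma~\ref{lem-weakly-pancyclic}, which needs $\delta\ge (n+2)/3$), path-flexibility comes from greedy path-building through linear-size common neighbourhoods (Lemma~\ref{lem-edgeversion-greedpath}), and only then is a second layer $G'$ of minimum degree $O(k)$ built on top of $F$, inheriting these properties from $F$ rather than from its own degree (Lemma~\ref{lem-k-dense-bi-sub}). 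Note also that when $k+2\le r\le 2k-4$ your clean-up at threshold $k+1$ applied to $T_{n-r+1,2}\circ K_r$ itself deletes nothing, leaving a non-bipartite ``core''.

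Even granting a bipartite, path-flexible core, your argument that a non-bipartite block $B$ forces a $C_{2k+1}$ ``by splicing'' only works when the odd structure outside the core is short: splicing can adjust the length of the portion inside the core, but only within the interval the core provides, so if the bad path (endpoints in the core, interior outside) already has length close to or exceeding $2k+1$, no core path can complete it to length exactly $2k+1$. This is where the bulk of the paper's proof actually lives: after using edge-maximality and a rewiring argument to pin down $V(G)=V(G')\sqcup V(P')$ (Claim~\ref{clm-4-1-part}), the paper disposes of bad paths of order $2k-1$ through $4k+2$ (Cases 2--5 of Section~\ref{sec-5}) not by finding a cycle but by delicate edge-counting --- bounding $e(G-P')$, $e(G-P',P')$ and $e(P')$ separately, using minimality of the bad path and $C_{2k+1}$-freeness to kill products of degrees such as $d_{V_1}(u)\cdot d_{V_2}(v)$ --- to contradict the density hypothesis. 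Your proposal has no mechanism for long bad paths, and your final counting stage (which is correct and matches the paper's ending) only becomes available after $B$ is known to be bipartite; so as written the proof collapses precisely at the step you yourself identify as the one on which ``the entire scheme'' depends.
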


Using a new method different from methods in \cite{RWWY2024-SIAM, Yan-Peng2024}, we weaken  the condition $2k\ge r+4$ to $2k\ge r$, and weaken $n\ge 20(r+2)^2k$ to $n\ge 100k$ in Theorem \ref{thm-YP}.  Furthermore, our method works not only for edge stability of $C_{2k+1}$-free graphs, it also allows us to obtain strong spectral stability of $C_{2k+1}$-free graphs (Theorem \ref{thm-chromatic-spectral-version}). Let us state the first main result in this paper precisely.

\begin{theorem}\label{thm-chromatic-edges-version}
Let  $k\ge 2$,  $3 \le r\le 2k$ and $n\ge 100k$. Suppose that $G$ is an $n$-vertex $C_{2k+1}$-free graph with $e(G)\ge \big\lfloor{\frac{(n-r+1)^2}{4}}\big\rfloor+{r\choose 2} $.   Then $G\in \mathcal{G}_{n,r}$, unless $G$ is obtained from $T_{n-r+1,2}$ by suspending $K_r$.
\end{theorem}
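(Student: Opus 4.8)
The plan is to isolate a single large bipartite $2$-connected ``core'' inside $G$, prove via the path structure of this core that everything else hangs off it at cut vertices, and then let a short edge count decide between membership in $\mathcal{G}_{n,r}$ and the extremal graph $T_{n-r+1,2}\circ K_r$. First I would produce the core. Since $e(G)\ge \lfloor (n-r+1)^2/4\rfloor+\binom r2$ is within $O(rn)$ of $\lfloor n^2/4\rfloor$, repeatedly deleting vertices of current degree below a threshold $\tau$ of order $n/2$ leaves a nonempty induced subgraph $G'$ of minimum degree at least $\tau$; here the Füredi--Gunderson bound $\mathrm{ex}(m,C_{2k+1})=\lfloor m^2/4\rfloor$ (applicable since the surviving order is at least $100k-2k\ge 4k-2$) controls how many edges the deletions can destroy and forces $G'$ to be large. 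Because $\tau\gg k$, a largest block $H$ of $G'$ is $2$-connected with minimum degree far exceeding $k+1$ and has large parts; being $C_{2k+1}$-free, it must then be \emph{bipartite}, for otherwise the known fact that a $2$-connected non-bipartite graph of minimum degree at least $k+1$ on sufficiently many vertices contains $C_{2k+1}$ would apply. The essential ``nice property'' I would record for $H$ is a \emph{path spectrum}: for vertices $u,v$ in the same (resp.\ different) part of $H$ there exist $u$--$v$ paths of every even (resp.\ odd) length up to at least $2k$, which follows from $H$ being $2$-connected bipartite with minimum degree $\gg k$ and parts of size $\gg k$.

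Next I would use this path spectrum to pin down the global structure. Let $\beta$ be the block of $G$ containing $H$. If $\beta$ were non-bipartite it would carry an odd closed walk; splicing that walk into $H$ through two internally disjoint paths (available by $2$-connectivity) and tuning the length inside $H$ by the path spectrum produces a cycle of length exactly $2k+1$, a contradiction, so $\beta$ is bipartite. Take $B\supseteq\beta$ to be a maximal induced bipartite subgraph obtained by absorbing every vertex whose neighbourhood in the current bipartite part lies in a single part. I then claim each component $C$ of $G-V(B)$ meets $V(B)$ in a single vertex: if $C$ attached to two distinct vertices $c_1,c_2\in V(B)$, an internal $c_1$--$c_2$ connection through $C$ together with a $c_1$--$c_2$ path in $H$ of complementary parity and suitable length (supplied by the path spectrum) yields $C_{2k+1}$ whenever the two attachments force an odd closed walk, while if no odd walk is forced then $C$ is bipartite-compatible with $B$ and would already have been absorbed; an attachment joining both parts of $B$ likewise creates $C_{2k+1}$ at once. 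Hence $G$ is obtained from the bipartite graph $B$ by suspending graphs $G_1,\dots,G_s$, exactly as in the definition of $\mathcal{G}_{n,r}$.

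Finally I would count edges. Writing $p=\sum_i|V(G_i)\setminus V(B)|$ for the number of vertices off the core, the bipartite part contributes at most $\lfloor (n-p)^2/4\rfloor$ edges, while each suspended piece $G_i$, sharing one vertex with $B$, contributes at most $\binom{p_i+1}{2}$ edges off $B$; superadditivity of $\binom{\,\cdot+1}{2}$ gives $e(G)\le f(p):=\lfloor (n-p)^2/4\rfloor+\binom{p+1}{2}$. Since $f$ is strictly decreasing for arguments below $(n-1)/3$ (a range containing both $p$ and $r-1$ because $n\ge 100k$) and $f(r-1)=\lfloor (n-r+1)^2/4\rfloor+\binom r2\le e(G)$, the hypothesis forces $p\le r-1$. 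If $p\le r-2$ then $G\in\mathcal{G}_{n,r}$ by definition, while if $p=r-1$, chasing equality through the three estimates forces $B=T_{n-r+1,2}$ and the single suspended piece to be $K_r$, that is, the excluded graph $T_{n-r+1,2}\circ K_r$.

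The main obstacle is everything resting on the path spectrum of $H$: establishing, under only the \emph{linear} hypothesis $n\ge 100k$, that $H$ is simultaneously large, genuinely $2$-connected (rather than merely of high minimum degree), bipartite, and rich enough in paths of every admissible length up to $2k$, and then routing an arbitrary odd structure or a two-vertex attachment through $H$ so as to realize \emph{exactly} the forbidden length $2k+1$. Controlling the absorption step so that it neither destroys $2$-connectivity nor the path spectrum is the delicate point; the concluding edge count and the equality analysis are then routine.
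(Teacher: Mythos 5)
Your skeleton matches the paper's (a large $2$-connected bipartite core with a path spectrum, then arguing the block containing it is bipartite, then a final edge count), but there is a genuine gap at the central step, and it is exactly the one you flag at the end as ``the main obstacle'' without resolving it. You claim that if the block $\beta$ containing $H$ is non-bipartite, then splicing an odd closed walk into $H$ and ``tuning the length inside $H$ by the path spectrum produces a cycle of length exactly $2k+1$.'' This works only when the odd attachment is \emph{short}: paths inside $H$ can only \emph{add} length (at least $2$ edges), so if the odd connection between two core vertices has length greater than $2k-1$, no completion through $H$ can realize length exactly $2k+1$. Such configurations genuinely exist and are $C_{2k+1}$-free: take $T_{m,2}$ and attach a path of odd length $\ell>2k+1$ between two vertices of the same part; every odd cycle then has length at least $\ell+2$. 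This graph has a non-bipartite block containing the core, is not a suspension structure, and cannot be excluded by any cycle-finding argument --- it can only be excluded because the long ear wastes vertices and forces $e(G)$ below the hypothesis. This is precisely why the paper's proof, after the easy case $h\le 2k-2$, spends its entire effort on four edge-counting cases ($h=2k-1$, $h=2k$, $h=2k+1$, $2k+2\le h\le 4k+2$), each using the minimality of the bad path, degree bounds on its interior vertices, and neighborhood-disjointness arguments. The same defect infects your later claim that every component of $G-V(B)$ meets $B$ in one vertex: a component attached at two vertices by a long odd connection ``forces an odd closed walk'' but does not yield $C_{2k+1}$, and it is not ``bipartite-compatible'' either, so your dichotomy fails. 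Your final edge count is applied only \emph{after} the suspension structure is established, so it cannot rescue these cases.

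Two further points. First, the machinery the paper uses to make the case analysis finite is missing from your proposal: the paper takes $G$ edge-maximal among $C_{2k+1}$-free graphs outside $\mathcal{G}_{n,r}$ (a WLOG you never set up), and uses this maximality in Claim 5.1 to show that \emph{every} vertex outside the core lies on the single bad path, which bounds its order by roughly $4k$; without this, the bad path and the vertices hanging elsewhere are uncontrolled and even the edge-counting cannot be organized. Second, the ``known fact'' you invoke for bipartiteness of the core --- that a $2$-connected non-bipartite graph of minimum degree at least $k+1$ on many vertices contains $C_{2k+1}$ --- is false: a balanced blow-up of a long odd cycle $C_{2m+1}$ with $m>k$ is $2$-connected, non-bipartite, has arbitrarily large minimum degree and order, and contains no odd cycle shorter than $2m+1$. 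What is true, and what the paper uses (Brandt--Faudree--Goddard weak pancyclicity plus Erd\H{o}s--Gallai), requires minimum degree \emph{linear in $n$}, namely at least $(n+2)/3$; your core's degree threshold of order $2n/5$ does satisfy this, so this particular error is reparable, unlike the first one.
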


Theorems \ref{thm-YP} and \ref{thm-chromatic-edges-version} provide a more detailed structure, instead of making it bipartite by deleting at most $r-2$ vertices.
In our result, we weaken the constraint of Theorems \ref{thm-RWWY} and \ref{thm-YP} to the linear dependence $n\ge 100k$. This improvement is interesting in view of the following corollary.

\begin{corollary}
Suppose that $r\ge 3$ and $G$ is an $n$-vertex graph with $e(G)\ge \big\lfloor{\frac{(n-r+1)^2}{4}}\big\rfloor+{r\choose 2}$.   If  $G\notin \mathcal{G}_{n,r}$, and $G$ is not obtained from $T_{n-r+1,2}$ by suspending $K_r$, then $C_{2k+1}\subset G$ for $\frac{r}{2} \le k\le {n \over 100}$.
\end{corollary}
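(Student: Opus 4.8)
The plan is to observe that this corollary is nothing more than the contrapositive of Theorem~\ref{thm-chromatic-edges-version}, so the entire task reduces to verifying that the stated window for $k$ coincides with the parameter range in which that theorem applies. First I would fix an arbitrary integer $k$ with $\frac{r}{2}\le k\le \frac{n}{100}$ and check each hypothesis of Theorem~\ref{thm-chromatic-edges-version} in turn. From $k\ge \frac{r}{2}$ and $r\ge 3$ we obtain $k\ge \frac{3}{2}$, and since $k$ is an integer this forces $k\ge 2$. The same inequality $k\ge \frac{r}{2}$ rearranges to $r\le 2k$, which together with the given $r\ge 3$ yields $3\le r\le 2k$. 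Finally, $k\le \frac{n}{100}$ is precisely $n\ge 100k$. Thus all three conditions $k\ge 2$, $3\le r\le 2k$ and $n\ge 100k$ required by Theorem~\ref{thm-chromatic-edges-version} are met.

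Next I would argue by contradiction. Suppose that $G$ were $C_{2k+1}$-free. Since by hypothesis $e(G)\ge \big\lfloor\frac{(n-r+1)^2}{4}\big\rfloor+{r\choose 2}$, Theorem~\ref{thm-chromatic-edges-version} would then force either $G\in \mathcal{G}_{n,r}$, or $G$ to be obtained from $T_{n-r+1,2}$ by suspending $K_r$. Each alternative directly contradicts the standing assumption of the corollary that $G\notin \mathcal{G}_{n,r}$ and that $G$ is not obtained from $T_{n-r+1,2}$ by suspending $K_r$. Hence $G$ cannot be $C_{2k+1}$-free, which is exactly the desired conclusion $C_{2k+1}\subset G$. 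As $k$ was an arbitrary integer in the window $\frac{r}{2}\le k\le \frac{n}{100}$, the containment holds for every such $k$.

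There is no genuine obstacle here: all the mathematical content lives in Theorem~\ref{thm-chromatic-edges-version}, and the corollary merely repackages it by taking the contrapositive and re-expressing the admissible parameter region. The only point demanding any care is the elementary arithmetic identifying the window $\frac{r}{2}\le k\le \frac{n}{100}$ (under $r\ge 3$) with the theorem's hypotheses $k\ge 2$, $3\le r\le 2k$, $n\ge 100k$; in particular, one should note that $k\ge 2$ is not an additional assumption but is automatically implied by $r\ge 3$ together with $k\ge \frac{r}{2}$.
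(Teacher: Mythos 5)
Your proposal is correct and matches the paper's intent exactly: the paper states this corollary without proof, treating it as the immediate contrapositive of Theorem~\ref{thm-chromatic-edges-version}, which is precisely your argument. Your careful verification that the window $\frac{r}{2}\le k\le \frac{n}{100}$ (with $r\ge 3$ and $k$ an integer) is equivalent to the hypotheses $k\ge 2$, $3\le r\le 2k$, $n\ge 100k$ is the only substantive step, and you carried it out correctly.
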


\subsection{$C_{2k+1}$-free graphs with high chromatic number}

 The Mantel theorem \cite{Bollobas78} states that every $n$-vertex triangle-free graph contains at most $\lfloor n^2/4\rfloor$ edges.
An old result of Erd\H{o}s \cite[p. 306, Exercise 12.2.7]{BM2008} states that
if $G$ is a non-bipartite triangle-free graph  on $n$ vertices, then
\[ e(G)\le \left\lfloor \frac{(n-1)^2}{4} \right\rfloor+1 .\]
There are many different extremal graphs that achieve this bound as seen by blowing up the vertices of a $5$-cycle to independent sets of size $1,1,a,b,c$, respectively, where $a+c=\lfloor \frac{n}{2} \rfloor$ and $b=\lceil\frac{n}{2} \rceil -2$.
Note that $G$ is non-bipartite if and only if the chromatic number $\chi (G)\ge 3$. In another paper, Ren, Wang, Wang and Yang \cite{RWWY2024} proved the following result.

\begin{theorem}[Ren, Wang, Wang and Yang \cite{RWWY2024}]
   If $G$ is a triangle-free graph on $n\ge 150$ vertices
   with chromatic number $\chi (G)\ge 4$, then
   \[ e(G)\le \left\lfloor \frac{(n-3)^2}{4} \right\rfloor + 5,\]
with equality if and only if $G$ is a specific blow-up of the Gr\"{o}tzsch graph.
\end{theorem}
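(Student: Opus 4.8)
The plan is to follow the template of Erd\H{o}s's classical bound for non-bipartite triangle-free graphs (the case $\chi\ge 3$, giving $\lfloor (n-1)^2/4\rfloor+1$), where one isolates a ``defect structure'' that certifies non-$2$-colourability and charges its cost against the bipartite maximum $\lfloor n^2/4\rfloor$. The point is that an odd-cycle obstruction costs only about $n/2$ edges, whereas the target $\lfloor (n-3)^2/4\rfloor+5\approx n^2/4-\tfrac{3n}{2}$ says that certifying $\chi\ge 4$ must cost roughly $\tfrac{3n}{2}$; so I must replace the single shortest odd cycle by a configuration that genuinely forces $\chi\ge 4$. The convenient reformulation is that $\chi(H)\le 3$ if and only if there is an independent set $I$ with $H-I$ bipartite; hence $\chi(G)\ge 4$ means that \emph{no} independent set meets every odd cycle of $G$, a far stronger obstruction than merely having one odd cycle. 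I would fix a maximum cut $(A,B)$ with $|A|=a\le b=|B|$, write $D=e(G[A])+e(G[B])$ for the internal edges, and record the basic triangle-free inequality that for every internal edge $xx'$ (say in $A$) the sets $N(x)\cap B$ and $N(x')\cap B$ are disjoint, so $\deg_B(x)+\deg_B(x')\le b$. Maximality of the cut gives $\deg_A(v)\le\deg_B(v)$ for all $v$, keeping degrees balanced across the cut.

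The heart of the argument is to convert the $\chi\ge 4$ condition into a lower bound of $\tfrac{3n}{2}-O(1)$ on the deficiency $\lfloor n^2/4\rfloor-e(G)$. First I would show that only a bounded set $W$ of ``defect'' vertices carries the obstruction: deleting the endpoints of internal edges leaves a bipartite graph, and since $e(G)$ is near-extremal these vertices are few; this is where $n\ge 150$ is used, to guarantee that the bipartite part dominates and that no parasitic small configuration can compete. Using the reformulation, ``$G-I$ non-bipartite for every independent $I$'' means the graph on $W$ together with its attachment to $A\cup B$ cannot be opened up by a single independent set, and I would argue this is impossible unless $W$ hosts (a homomorphic image of) the Gr\"{o}tzsch graph, the unique smallest triangle-free $4$-chromatic graph. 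Sustaining such a core inside a triangle-free host then costs, through the disjointness inequality $\deg_B(x)+\deg_B(x')\le b$ summed over internal edges, a definite number of cross-edges below $ab$; tracking this with $a+b=n$ and optimising the split yields $e(G)\le\lfloor (n-3)^2/4\rfloor+5$, the additive $+5$ being the edges contributed by three singleton core vertices on top of a balanced $K_{\lceil(n-3)/2\rceil,\lfloor(n-3)/2\rfloor}$.

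For equality I would run a stability argument: near-equality forces $a,b$ almost balanced, the bipartite graph between $A\setminus W$ and $B\setminus W$ to be complete, and every inequality used (cut-maximality, neighbourhood-disjointness, the count of defect vertices) to hold with equality; this rigidity then pins $W$ and its attachment down to the specific three-vertex core whose blow-up is the Gr\"{o}tzsch graph, giving uniqueness. The main obstacle, I expect, is precisely the quantitative step of the second paragraph: turning the purely qualitative ``no independent set makes $G-I$ bipartite'' into a sharp $\tfrac{3n}{2}-O(1)$ deficiency bound, and proving that the Gr\"{o}tzsch configuration is the \emph{cheapest} such obstruction rather than some other triangle-free $4$-chromatic gadget. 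This will require a careful case analysis of how the few defect vertices attach to the two sides of the cut, controlled by the disjointness inequality and by triangle-freeness of the whole host, and it is here that the explicit threshold $n\ge 150$ earns its keep by eliminating all lower-order competitors.
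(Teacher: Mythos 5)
A preliminary remark on the comparison itself: this statement is not proved anywhere in the paper — it is quoted purely as background (attributed to Ren, Wang, Wang and Yang \cite{RWWY2024}), so there is no ``paper's own proof'' to measure you against, and your proposal has to stand on its own. On its own terms, it has two genuine gaps, one of which is fatal.

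The fatal one is that the extremal configuration your bound and equality analysis are calibrated against cannot exist. You describe the extremal graph as a balanced $K_{\lceil(n-3)/2\rceil,\lfloor(n-3)/2\rfloor}$ plus ``three singleton core vertices'' contributing the $+5$ edges. Any triangle-free graph of that shape is $3$-colourable: since the part on $X\cup Y$ is complete bipartite, triangle-freeness forces each of the three extra vertices to have neighbours in at most one of $X$, $Y$; the three extra vertices induce at most two edges (no triangle), hence a forest; colouring $X$ with colour $1$, $Y$ with colour $2$ leaves each extra vertex a list of at least two colours from $\{1,2,3\}$, and a forest is $2$-choosable. So $\chi\le 3$, contradicting $\chi(G)\ge 4$. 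The true extremal graph — the ``specific blow-up of the Gr\"{o}tzsch graph'' — is the one in which the apex $w$ and one middle-layer vertex $u_i$ are blown up into classes of sizes $a,b$ with $a+b=n-9$, the other nine vertices staying singletons; its edge count is $ab+4a+2b+13=a(b+2)+2n-5$, which maximizes to $\lfloor(n-7)^2/4\rfloor+2n-5=\lfloor(n-3)^2/4\rfloor+5$. Thus the large complete bipartite piece lives on $n-9$ vertices (parts of size about $(n-7)/2$), not on $n-3$ vertices, and the vertices outside it carry roughly $3n$ edges, not $5$: the ``$+5$'' is an artifact of normalizing against $\lfloor(n-3)^2/4\rfloor$, not a count of edges at three core vertices. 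Any stability or rigidity argument driven toward your target structure would therefore collapse precisely at the equality step.

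The second gap is the one you yourself flag: the entire quantitative heart of the theorem — converting ``$G-I$ is non-bipartite for every independent set $I$'' into the sharp deficiency bound $\lfloor n^2/4\rfloor-e(G)\ge \tfrac{3}{2}n-O(1)$, and showing that the Gr\"{o}tzsch-type configuration is the cheapest obstruction — is deferred rather than proved. The proposed reduction, that the bounded defect set $W$ must ``host (a homomorphic image of) the Gr\"{o}tzsch graph,'' is not a usable statement: a $4$-chromatic triangle-free graph need not contain the Gr\"{o}tzsch graph as a subgraph (there are infinitely many triangle-free $4$-critical graphs, starting at $11$, $12$, $13,\ldots$ vertices), and homomorphisms run the wrong way for your purpose (a blow-up maps \emph{onto} Gr\"{o}tzsch; nothing forces a dense $4$-chromatic graph to relate to it at all). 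Ruling out every alternative $4$-chromatic triangle-free core as more expensive is essentially the theorem itself, so as written the plan is circular at its crux: it is a program whose two key steps are, respectively, missing and aimed at an impossible configuration.
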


We point out that it is difficult to determine the maximum size of a triangle-free graph $G$ under the constraint $\chi (G)\ge r$ for a large integer $r$. One of the primary challenges in addressing this problem lies in finding the smallest order of a triangle-free graph with chromatic number at least $r$.
Indeed, let $n(r)$ be the smallest order of a triangle-free graph $G$ such that $\chi (G)\ge r$. It is known that $n(3)=5$ as witnessed by the $5$-cycle, $n(4)=11$ by the Gr\"{o}tzsch graph, and $n(5)=22$ by the `complicated' graph constructed in \cite{JR1995}. It is quite difficult to determine the exact value of $n(r)$ for an integer $r$.


 As the second corollary of
 Theorem \ref{thm-chromatic-edges-version},
 we give a tight upper bound on the number of edges of an $n$-veretx $C_{2k+1}$-free graph $G$ with chromatic number $\chi (G)\ge r$.

\begin{corollary} \label{thm-edge}
    Let $k\ge 2, 3\le r\le 2k$ and $n\ge 100k$.  If $G$ is an $n$-vertex $C_{2k+1}$-free graph with $\chi (G)\ge r$, then
    \[ e(G)\le \left\lfloor{\frac{(n-r+1)^2}{4}}\right\rfloor +{r\choose 2}, \]
    with equality if and only if $G$ is obtained from $T_{n-r+1,2}$ and $K_r$ by sharing exactly one vertex.
\end{corollary}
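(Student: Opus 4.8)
The plan is to derive Corollary~\ref{thm-edge} from Theorem~\ref{thm-chromatic-edges-version} by using the chromatic-number hypothesis to eliminate the generic structural outcome. Write $M(r):=\lfloor (n-r+1)^2/4\rfloor+\binom{r}{2}$ for the target bound. First I would record that the extremal graph $T_{n-r+1,2}\circ K_r$ is a legitimate competitor attaining this bound: a direct count gives $e(T_{n-r+1,2}\circ K_r)=M(r)$; its chromatic number is $r$, since $K_r$ forces $r$ colours while the bipartite part needs only two and the two blocks meet in a single vertex; and it is $C_{2k+1}$-free because every cycle lies entirely inside $T_{n-r+1,2}$ (bipartite, hence no odd cycle) or entirely inside $K_r$, and with $r\le 2k$ the clique $K_r$ contains no cycle of length $2k+1$. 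It therefore suffices to prove that this is the \emph{only} $C_{2k+1}$-free graph with $\chi(G)\ge r$ reaching $M(r)$ edges.

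The crux is the following observation about the family $\mathcal{G}_{n,r}$: every graph in it has chromatic number at most $r-1$. Indeed, if $G$ is obtained from a bipartite graph $B$ by suspending $G_1,\dots,G_s$ with $\sum_i|V(G_i)\setminus V(B)|\le r-2$, then each $G_i$ meets $B$ in exactly one vertex, so $|V(G_i)|=|V(G_i)\setminus V(B)|+1\le (r-2)+1=r-1$, whence $\chi(G_i)\le r-1$. Since suspension introduces no cycle crossing between $B$ and any $G_i$, the $2$-connected blocks of $G$ are precisely those of $B$ together with those of the $G_i$; using the standard fact that the chromatic number of a graph equals the maximum chromatic number over its blocks, we get $\chi(G)=\max\{\chi(B),\chi(G_1),\dots,\chi(G_s)\}\le\max\{2,r-1\}=r-1$. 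I regard this as the heart of the argument: the definition of $\mathcal{G}_{n,r}$ is tailored precisely so that the few extra vertices can never push the chromatic number past $r-1$, which is exactly the feature that separates $\mathcal{G}_{n,r}$ from the extremal graph.

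With this in hand the deduction is short. Suppose $G$ is an $n$-vertex $C_{2k+1}$-free graph with $\chi(G)\ge r$ and $e(G)\ge M(r)$. Then Theorem~\ref{thm-chromatic-edges-version} applies and forces either $G\in\mathcal{G}_{n,r}$ or $G=T_{n-r+1,2}\circ K_r$. The first alternative is impossible, since it would give $\chi(G)\le r-1<r$. Hence $G=T_{n-r+1,2}\circ K_r$ and $e(G)=M(r)$. Consequently no admissible graph can exceed $M(r)$ edges (otherwise the same argument would force $e(G)=M(r)$, a contradiction), and equality holds exactly for $T_{n-r+1,2}\circ K_r$, which is the graph obtained from $T_{n-r+1,2}$ and $K_r$ by sharing exactly one vertex.

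The main obstacle I anticipate lies not in the logical skeleton above but in checking two bookkeeping points against the precise definitions. First, one must confirm that ``suspension'' truly forbids every edge between $B$ and $V(G_i)\setminus V(B)$ other than through the shared vertex, so that the block decomposition of $G$ splits cleanly and the max-over-blocks formula applies verbatim. Second, the routine but necessary verifications that $T_{n-r+1,2}\circ K_r$ simultaneously has $\chi=r$, is $C_{2k+1}$-free, and realises exactly $M(r)$ edges must be carried out so that this graph genuinely certifies tightness and the equality case. Everything beyond these is a direct appeal to Theorem~\ref{thm-chromatic-edges-version}.
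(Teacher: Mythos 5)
Your proposal is correct and follows essentially the same route as the paper: apply Theorem \ref{thm-chromatic-edges-version} and then rule out the alternative $G\in\mathcal{G}_{n,r}$ by showing every graph in that family has chromatic number at most $r-1$. The only (immaterial) difference is the mechanism for that bound: you invoke the fact that the chromatic number is the maximum over blocks, while the paper colours directly (two colours on $B$, then at most $|V(G_i)\setminus V(B)|-1$ new colours per suspended $G_i$, reusing the second bipartition colour); both yield $\chi(G)\le r-1$ and the same conclusion.
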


In what follows, we explain that Corollary \ref{thm-edge} is a direct consequence of Theorem \ref{thm-chromatic-edges-version}.
 
\begin{proof}
Applying Theorem \ref{thm-chromatic-edges-version},
we have that  if  $n\ge 100k$ and $G$ is an $n$-vertex $C_{2k+1}$-free graph with $e(G)\ge \big\lfloor{\frac{(n-r+1)^2}{4}}\big\rfloor+{r\choose 2} $, then either $G$ can be obtained from a bipartite graph $B$ by suspending graphs $G_1, G_2,\ldots, G_s$ such that $\sum_{i=1}^s |V(G_i)\setminus V(B)|\le r-2$, or $G$ is obtained by suspending $K_r$ to $T_{n-r+1,2}$.
In the former case,  we can color the vertices of the bipartite graph $B$ by two colors. Since each $G_i$  is suspended on the bipartite graph $B$,  we can properly color the vertices of $V(G_i)\setminus V(B)$ by at most $|V(G_i)\setminus V(B)|-1$  new colors. Therefore, we can properly color the vertices of $G$ by at most $2+\sum_{i=1}^s (|V(G_i)\setminus V(B)|-1)\le r-1$ colors.
In conclusion, the above argument implies that if
$G$ is an $n$-vertex $C_{2k+1}$-free graph with $e(G)\ge \lfloor \frac{(n-r+1)^2}{4} \rfloor + {r \choose 2}$ and $\chi (G)\ge r$, then $G$ is obtained from $T_{n-r+1,2}$ by suspending a copy of $K_r$.
This completes the proof of Corollary \ref{thm-edge}.
\end{proof}

\subsection{ Strong spectral  stability for  $C_{2k+1}$-free graphs}

In recent years, there has been significant and popular growth in the field of spectral extremal graph theory.
Spectral extremal graph theory is an advanced and specialized area of mathematics that combines graph theory with algebraic methods, particularly through the use of rank and eigenvalues of the associated matrices to study the combinatorial structures of graphs. This field has gained popularity in recent years because of its potential applications in various domains such as computer science, engineering, and more broadly, in the study of complex networks and systems.

Let $G$ be a simple graph on the vertex set $\{v_1,v_2,\ldots ,v_n\}$. The adjacency matrix of
$G$ is defined as $A(G)=[a_{i,j}]_{i,j=1}^n$,
where $a_{i,j}=1$ if $v_i$ and $v_j$ are adjacent, and $a_{i,j}=0$ otherwise.
Let $\lambda (G)$ be the spectral radius
of $G$, which is defined as the maximum modulus of eigenvalues of $A(G)$.
 Since $A(G)$ is a non-negative matrix,
the Perron--Frobenius theorem implies that $\lambda (G)$ is actually the largest eigenvalue of $A(G)$.
The study in this article mainly concentrates on the extremal problems of the spectral radius of the adjacency matrix of a graph.

In 1986,  Wilf  \cite{Wil1986}  provided the first result regarding the spectral version of Tur\'{a}n's theorem and proved that for every $n$-vertex $K_{r+1}$-free graph $G$, we have
\begin{equation} \label{eq-Wilf}
\lambda (G)\le \left(1-\frac{1}{r} \right)n.
\end{equation}
In 2002, Nikiforov \cite{Niki2002cpc}
proved that if $G$ is a $K_{r+1}$-free graph with $m$ edges, then
\begin{equation} \label{eq-Niki-2m}
\lambda (G)^2 \le \left( 1-\frac{1}{r} \right)2m.
\end{equation}
 The case of equality was later characterized in \cite{Niki2006-walks, Niki2009-more}.
Either  the bound in (\ref{eq-Wilf}) or (\ref{eq-Niki-2m}) can imply the Tur\'{a}n bound in (\ref{eq-Turan}).
Furthermore, Nikiforov \cite{Niki2007laa2} and Guiduli \cite{Gui1996} independently sharpened Wilf's bound (\ref{eq-Wilf}) and showed a
precise version of the spectral Tur\'{a}n theorem, which states that if $G$ is an $n$-vertex $K_{r+1}$-free graph, then $\lambda (G)\le \lambda ({T_{n,r}})$, and the equality holds if and only if $G=T_{n,r}$.

In 2008,
Nikiforov \cite{Niki2008} studied the spectral extremal problem for odd cycles and proved that if $k\ge 2$ and $n\ge 320(2k+1)$ is sufficiently large,
and $G$ is a $C_{2k+1}$-free graph of order $n$, then
$\lambda (G)\le \lambda (T_{n,2})$.
To eliminate the condition that requires $n$ to be sufficiently large,  a recent work of Zhai and Lin \cite[Corollary 1.2]{ZL2022jgt} implies that
    if $n\ge 14k+7$ and $G$ is a $C_{2k+1}$-free graph on $n$ vertices, then $\lambda (G)\le \lambda (T_{n,2})$, with equality if and only if $G=T_{n,2}$.
 Note that the extremal triangle-free graphs are bipartite.
 Furthermore,
  Lin, Ning and Wu \cite{LNW2021} investigated the maximum spectral radius of triangle-free graphs by excluding all bipartite graphs.
Let $S(T_{n-1,2})$ be obtained from
$T_{n-1,2}$ by subdividing an edge.
It was shown in \cite{LNW2021} that if $G$ is triangle-free and non-bipartite, then
\[ \lambda (G)\le \lambda (S(T_{n-1,2})),\]
where the equality holds if and only if $G=  S(T_{n-1,2})$.
Inspired by this result, Li and Peng \cite{LP2022second} studied the non-$r$-partite $K_{r+1}$-free graphs.
These results also stimulated more studies of spectral extremal problems for non-bipartite graphs without certain subgraphs; see \cite{LG2021,LSY2022,LP-ejc2022,LFP2023-solution,ZFL2024} for odd cycles.

It is interesting to study the spectral problem for non-bipartite graphs without an odd cycle $C_{2k+1}$.
The case $k=2$ was initially proved by Guo, Lin and Zhao \cite{GLZ2021} for every $n\ge 21$; the general case for $k\ge 3$ was recently studied by Zhang and Zhao \cite{ZZ2023} for fixed $k$ and sufficiently large $n$. To state their result,
we denote by $T_{n-2,2}\circ K_3$ the graph obtained by identifying a vertex of $T_{n-2,2}$ that belongs to the partite set of size $\lfloor \frac{n-2}{2} \rfloor$ and a vertex of $K_3$.

\begin{theorem}[Guo, Lin and Zhao  \cite{GLZ2021}; Zhang and Zhao \cite{ZZ2023}]
\label{thm-non-bi-C2k+1}
     Let $k\geq 2$ and $n\ge 21$, or $k\ge 3$ and $n$ be sufficiently large.
If $G$ is a non-bipartite $C_{2k+1}$-free graph on $n$ vertices, then
     $$
     \lambda(G)\le \lambda(T_{n-2,2}\circ K_3),
     $$
     and the equality holds if and only if $G=  T_{n-2,2}\circ K_3$.
\end{theorem}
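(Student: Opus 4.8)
The plan is to locate a spectral-extremal graph via its Perron eigenvector and then force its exact shape, using the edge-stability of Corollary~\ref{thm-edge} as the finishing tool once the graph is known to be dense. Let $G$ be a non-bipartite $C_{2k+1}$-free graph on $n$ vertices that maximises the spectral radius, set $\lambda:=\lambda(G)$, and let $\mathbf{x}\ge 0$ be a Perron eigenvector scaled so that $\max_v x_v = x_z = 1$. The first step is to verify that $T_{n-2,2}\circ K_3$ is an admissible competitor: it is non-bipartite since it contains a triangle, and it is $C_{2k+1}$-free for every $k\ge 2$ because the two vertices of the attached $K_3$ lying outside $T_{n-2,2}$ have degree $2$, so the triangle is its only odd cycle. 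Consequently $\lambda\ge \lambda(T_{n-2,2}\circ K_3)$, and a short computation gives $\lambda\ge \tfrac{n}{2}-O(1)$; on the other hand the Zhai--Lin bound \cite{ZL2022jgt} gives $\lambda\le \lambda(T_{n,2})\le \tfrac{n}{2}$, so $\lambda=\tfrac{n}{2}-O(1)$ and the extremal graph is very close to being balanced bipartite.

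Next I would extract the near-bipartite structure from the eigenvector. The equation $\lambda x_z=\sum_{w\sim z}x_w\le d(z)$ already forces $d(z)\ge \tfrac{n}{2}-O(1)$; iterating it once, $\lambda^2 x_z=\sum_{w\sim z}\sum_{w'\sim w}x_{w'}$, and comparing with $\lambda^2=\tfrac{n^2}{4}-O(n)$ shows that the eigenvector weight concentrates on two large sets $X,Y$ that play the role of the two sides of a nearly complete, nearly balanced bipartite graph, with only a bounded number of ``defect'' vertices carrying small weight and accounting for the odd cycles. Here the key structural ingredient enters: within $X\cup Y$ one finds a large $2$-connected bipartite induced subgraph $B$ in which any two vertices are joined by paths of many different lengths. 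Because $G$ is $C_{2k+1}$-free, this forbids most ways of attaching the defect vertices --- any defect vertex sending edges to both sides of $B$, or two defect vertices attaching too richly, would close a path of length $2k$ in $B$ into a forbidden $C_{2k+1}$. This pins the non-bipartite part down to a single short odd cycle attached to $B$ at essentially one vertex.

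Finally I would optimise. A weight and edge-completion argument shows that to maximise $\lambda$ the pair $(X,Y)$ must induce the complete balanced bipartite graph, and the surviving odd obstruction must be the cheapest possible, namely a triangle glued at a single vertex of the larger side; this is exactly $T_{n-2,2}\circ K_3$. Equivalently, once the preceding steps guarantee $e(G)\ge \floor{(n-2)^2/4}+3$, the equality case of Corollary~\ref{thm-edge} with $r=3$ identifies $G$ outright. I expect the main obstacle to be precisely this last bridge --- certifying that the spectral-optimal graph is also edge-near-extremal, and that every edge-completion or odd-cycle-shortening step which strictly increases $\lambda$ can be performed without creating a $C_{2k+1}$. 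Controlling these moves is exactly where the robust long-path property of the $2$-connected bipartite core $B$ is needed, and where the small-order cases (for instance $n$ near $21$ when $k=2$) would instead require a separate, more computational verification outside the range $n\ge 100k$ covered by Corollary~\ref{thm-edge}.
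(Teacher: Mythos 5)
First, a framing point: the paper does not actually prove Theorem \ref{thm-non-bi-C2k+1}; it quotes it from \cite{GLZ2021,ZZ2023}. The paper's own related argument is the proof of Theorem \ref{thm-chromatic-spectral-version} (Section \ref{sec-6}), which contains the $r=3$ case of Corollary \ref{thm-extension} but only for $n\ge 712k$. Measured against that proof, your skeleton starts the same way (spectral-extremal $G$, the lower bound $\lambda\ge \frac{n}{2}-O(1)$, conversion to an edge bound via Lemma \ref{lem-ZLS}, the $k$-dense bipartite core, and a bad-path analysis), but your finishing move is where the argument genuinely breaks. You propose to certify $e(G)\ge \lfloor (n-2)^2/4\rfloor+3$ and then invoke the equality case of Corollary \ref{thm-edge} with $r=3$. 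That bridge cannot be built from the spectral hypothesis: the only available conversion from spectral radius to edge count for $C_{2k+1}$-free graphs is Lemma \ref{lem-ZLS}, and from $\lambda(G)>\frac{n}{2}-O(1)$ it yields only $e(G)\ge \frac{n^2}{4}-O(kn)$, which falls short of the threshold $\lfloor (n-2)^2/4\rfloor+3=\frac{n^2}{4}-n+O(1)$ by a $\Theta(n)$ margin. Spectral extremality does not imply edge near-extremality at additive $O(1)$ precision, and the eigenvector-concentration sketch you give (iterating the eigen-equation once) is nowhere near strong enough to close this gap; you yourself flag this bridge as the main obstacle but supply no mechanism for it. This is exactly why the paper never reduces its spectral theorem to Theorem \ref{thm-chromatic-edges-version}: in its Cases 2--5 all contradictions are derived purely spectrally, either via the Sun--Das deletion inequality (Lemma \ref{lem-Sun-Das-deletion}) against $\lambda^2(G)\ge \lfloor (n-r+1)^2/4\rfloor$, or by a Rayleigh-quotient perturbation producing a graph $G^*\in\mathcal{G}^*_{n,t}$ with $\lambda(G^*)>\lambda(G)$, followed by the comparison Lemmas \ref{lem-spectral-order1} and \ref{lem-spectral-order2}. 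Those two appendix lemmas are the actual substitute for your missing bridge, and your proposal has no analogue of them.

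Two further concrete errors. First, your claimed extremal configuration --- a triangle ``glued at a single vertex of the larger side'' --- is the wrong graph: $T_{n-2,2}\circ K_3$ attaches $K_3$ to a vertex of the \emph{smaller} partite set (the higher-degree side), and for odd $n$ the two attachments have different spectral radii. Relatedly, the equality case of Corollary \ref{thm-edge} cannot ``identify $G$ outright'' even if your bridge existed, because edge counts do not distinguish the two attachments; an additional spectral comparison would still be required. Second, even granting all steps, your argument runs only in the regime where the paper's machinery applies ($n\ge 100k$, respectively $n\ge 712k$), so it cannot recover the stated range $n\ge 21$ for $k=2$; in that range the theorem really is the separate result of \cite{GLZ2021}, and your appeal to the Zhai--Lin bound \cite{ZL2022jgt} also needs $n\ge 14k+7>21$ there.
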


The second part of this paper studies {further} spectral extremal problems for $C_{2k+1}$-free graphs.
Using an almost unified argument, we shall establish the spectral versions of Theorem \ref{thm-chromatic-edges-version} and  Corollary \ref{thm-edge}.
To begin with,
 we introduce the definition of the spectral extremal graph.

 \begin{defn*}
     Let $T_{n-r+1,2}\circ K_r$ be the $n$-vertex graph obtained by identifying a vertex of $K_r$ and a vertex of the smaller partite set of $T_{n-r+1 ,2}$; see Figure \ref{Fig-Extremal-2}.
 \end{defn*}

 \begin{figure}[H]
\centering
\includegraphics[scale=0.8]{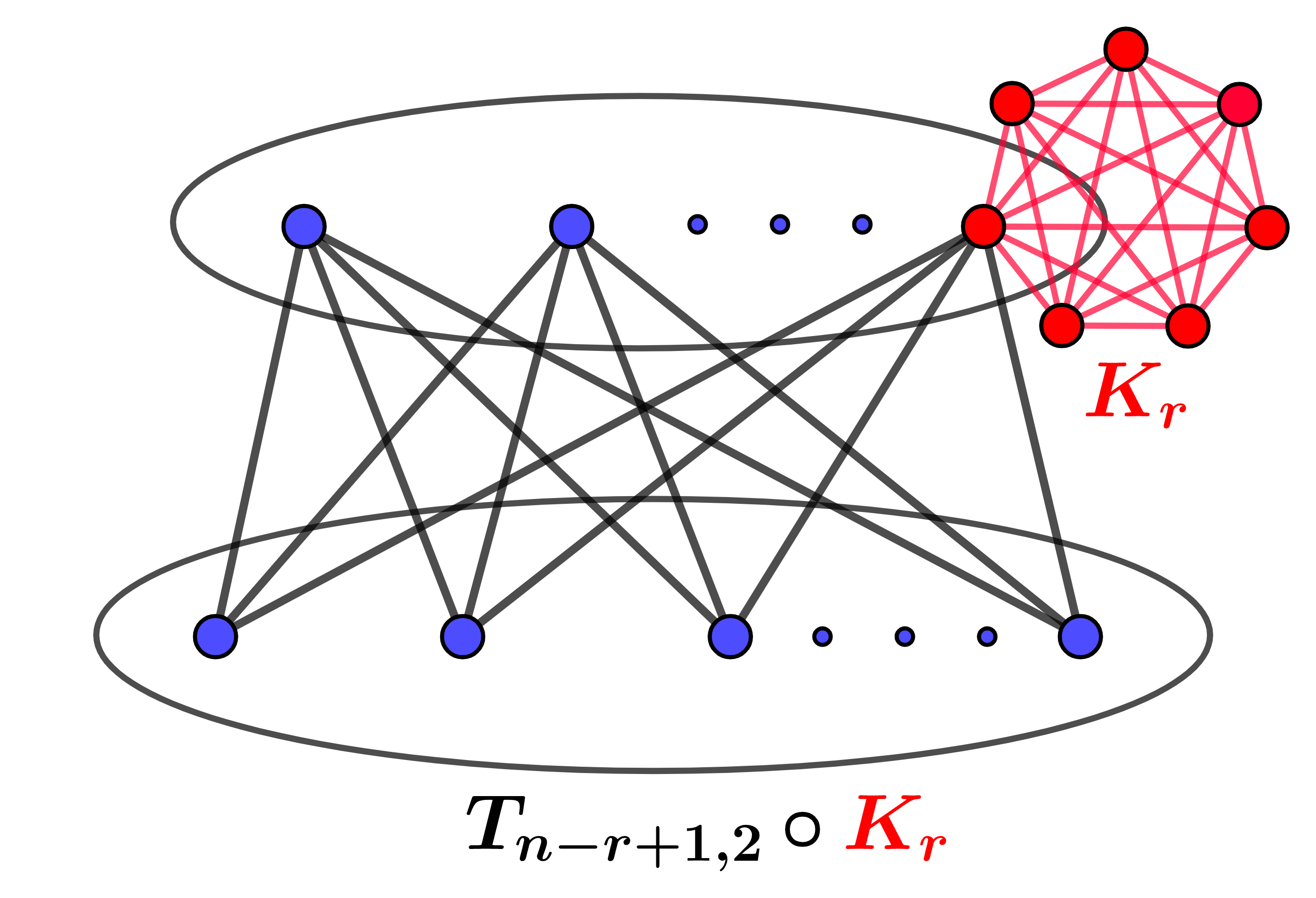}
\caption{The extremal graphs in Theorem \ref{thm-chromatic-spectral-version}.}
\label{Fig-Extremal-2}
\end{figure}

Our second main result shows that $C_{2k+1}$-free graphs have strong stability in terms of spectral radius.

\begin{theorem} \label{thm-chromatic-spectral-version}
Let $ k\geq 2, 3\le r\le 2k$ and $n\geq 712k$. Suppose that $G$ is an $n$-vertex $C_{2k+1}$-free graph with $\lambda(G)\geq \lambda(
T_{n-r+1,2} \circ K_r)$. Then $G\in \mathcal{G}_{n,r}$, unless $G=T_{n-r+1,2}\circ K_r$.
\end{theorem}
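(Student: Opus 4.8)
The plan is to run an eigenvector analysis on top of the same structural backbone that drives the edge version (Theorem~\ref{thm-chromatic-edges-version}). Write $\beta:=\lambda(T_{n-r+1,2}\circ K_r)$ and $\lambda:=\lambda(G)$. First I would record a clean lower bound on $\beta$: since $T_{n-r+1,2}\circ K_r$ contains the balanced complete bipartite graph on $n-r+1$ vertices, $\beta\ge \sqrt{\lfloor (n-r+1)^2/4\rfloor}\ge (n-r)/2$, so under $n\ge 712k$ and $r\le 2k$ we have $\lambda\ge\beta\ge n/2-k$; thus $\lambda$ sits within $O(k)$ of $n/2$, which is the slack the whole argument lives on. Let $\mathbf{x}$ be the Perron eigenvector normalised by $\max_v x_v=x_z=1$, so that $\lambda x_v=\sum_{u\sim v}x_u$ for every $v$. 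I would then assume toward the dichotomy that $G\neq T_{n-r+1,2}\circ K_r$ and aim to prove $G\in\mathcal{G}_{n,r}$; the case $r=3$ recovers the extremal graph $T_{n-2,2}\circ K_3$ of Theorem~\ref{thm-non-bi-C2k+1}, which serves as a base case and a consistency check.

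Next I would import the structural backbone. Using $C_{2k+1}$-freeness together with the spectral lower bound, I would produce the large $2$-connected bipartite induced subgraph $H$ with bipartition $(X,Y)$ advertised in the introduction, covering all but boundedly many vertices and such that outside the block $B$ containing $H$ the graph meets $B$ only at cut vertices. Feeding this rough picture into the edge stability result (Theorem~\ref{thm-chromatic-edges-version}) yields the decomposition of $G$ into a bipartite part together with suspended pieces $G_1,\dots,G_s$ of total excess $t:=\sum_i|V(G_i)\setminus V(B)|$. If $t\le r-2$ then $G\in\mathcal{G}_{n,r}$ and we are finished; so it remains to treat the case $t\ge r-1$ and show that it forces $G=T_{n-r+1,2}\circ K_r$, contradicting our assumption, whence $t\le r-2$ after all.

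The core is a localisation-plus-optimisation step. From the eigen-equation at $z$ and at the vertices of $H$ I would show that the Perron weight concentrates on $X\cup Y$, that the two partite sums $\sum_{v\in X}x_v$ and $\sum_{v\in Y}x_v$ each equal $\lambda$ up to an $O(k)$ error, and that every excess vertex carries small weight unless it attaches at a cut vertex of large weight; since in a near-balanced $K_{a,b}$ the smaller side carries the larger per-vertex weight, such a cut vertex must lie on the smaller side. With this control I would argue, via eigenvector-guided local moves that preserve $C_{2k+1}$-freeness and keep the graph in the class $t\ge r-1$ while not decreasing $\lambda$, that among all such graphs the spectral radius is uniquely maximised by the balanced configuration in which the bipartite part is $T_{n-r+1,2}$, the excess is exactly $r-1$ vertices, and the attachment is completed to a clique sharing a single smaller-side vertex, namely $T_{n-r+1,2}\circ K_r$, with value $\beta$. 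Hence any other admissible $G$ with $t\ge r-1$ satisfies $\lambda<\beta$, contradicting $\lambda\ge\beta$; so in fact $t\le r-2$ and $G\in\mathcal{G}_{n,r}$.

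The main obstacle, I expect, is precisely this last optimisation. Two points are delicate: first, ruling out $t\ge r$, where one must show that the loss $\lambda(T_{n-r+1,2})-\lambda(T_{n-t,2})\approx (t-r+1)/2$ in the bipartite part always outweighs whatever gain a larger or denser attachment can contribute, using $C_{2k+1}$-freeness to cap clique sizes at $2k$ and hence to cap the weight an attachment can inject; and second, pinning the attachment to a single smaller-side cut vertex as a clique rather than allowing it to spread over several vertices or to send edges into both parts. Both require quantitative control of the excess-vertex eigenweights, and it is exactly in absorbing these $O(k)$ error terms that the stronger hypothesis $n\ge 712k$ (as opposed to $n\ge 100k$ in the edge version) is consumed. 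The structural lemma and Theorem~\ref{thm-chromatic-edges-version} supply the rough picture essentially for free; the spectral-specific work lies entirely in turning that picture into the sharp inequality $\lambda\le\beta$ together with its equality case.
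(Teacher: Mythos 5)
Your plan founders at the step where you ``feed the rough picture into the edge stability result (Theorem~\ref{thm-chromatic-edges-version})''. That theorem requires $e(G)\ge \lfloor (n-r+1)^2/4\rfloor+\binom{r}{2}$, and the spectral hypothesis does not supply this. The best edge bound one can extract from $\lambda(G)\ge\lambda(T_{n-r+1,2}\circ K_r)$ is via Lemma~\ref{lem-ZLS}: since $G$ is $C_{2k+1}$-free, $\lambda(G)\le \frac{k-1+\sqrt{4m+(k-1)^2}}{2}$, which yields only $e(G)\ge \lambda^2(G)-(k-1)\lambda(G)$, i.e.\ roughly $\frac{n^2}{4}-\frac{3k}{2}n$, which the paper records as $e(G)>\frac{(n-5k)^2}{4}$. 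The threshold in Theorem~\ref{thm-chromatic-edges-version} is $\approx \frac{n^2}{4}-\frac{(r-1)n}{2}$, so for every $r\le 2k$ you fall short by a term of order $kn$ --- linear in $n$ --- and no amount of absorbing $O(k)$ eigenweight errors recovers it. A tell-tale sign that this step cannot be right: the conclusion of Theorem~\ref{thm-chromatic-edges-version} is exactly the dichotomy you are trying to prove, so if it were applicable the spectral theorem would be a one-line corollary; the fact that the paper devotes all of Section~\ref{sec-6} plus the appendix to it shows the edge hypothesis is genuinely unavailable.

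What the paper actually does is use only the structural Lemma~\ref{lem-k-dense-bi-sub} (with $c=5k$), whose hypothesis $e(G)\ge\frac{(n-c)^2}{4}$ tolerates this weak edge bound, to obtain the $k$-dense bipartite subgraphs $F\subseteq G'$ together with the eigenweight separation of Claim~\ref{Clm-vector-size}. The hard part --- showing the block $B$ containing $G'$ is bipartite --- is then carried out with genuinely spectral substitutes for the edge counts: for a bad path of order $2k-1$ the contradiction comes from the Sun--Das deletion inequality (Lemma~\ref{lem-Sun-Das-deletion}) applied to $G-P'$, using that $G'=G-P'$ is bipartite so $\lambda(G-P')\le\sqrt{e(G-P')}$; for bad paths of order at least $2k$, one performs Rayleigh-quotient edge switches guided by the Perron vector to produce a $C_{2k+1}$-free graph $G^*\in\mathcal{G}^*_{n,t}$ with $\lambda(G^*)>\lambda(G)$, and then the comparison Lemmas~\ref{lem-spectral-order1} and~\ref{lem-spectral-order2} force $\lambda(G)<\lambda(T_{n-r+1,2}\circ K_r)$, a contradiction. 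Your closing ``localisation-plus-optimisation'' paragraph is in the right spirit --- it is essentially what Lemmas~\ref{lem-spectral-order1} and~\ref{lem-spectral-order2} and the last paragraph of the paper's proof accomplish --- but it only handles the endgame after the bipartite-block structure is secured, which is precisely the part your proposal has no valid route to.
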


Theorem \ref{thm-chromatic-spectral-version} provides a spectral analogue of Theorem \ref{thm-chromatic-edges-version}.
As an application of Theorem \ref{thm-chromatic-spectral-version}, we can obtain the following extension of Theorem \ref{thm-non-bi-C2k+1}.

\begin{corollary} \label{thm-extension}
    Let $k\ge 2,3\le r\le 2k$ and $n\ge 712k$ be integers.  If $G$ is an $n$-vertex $C_{2k+1}$-free graph with chromatic number $\chi (G)\ge r$, then
    \[ \lambda (G)\le  \lambda (T_{n-r+1,2}\circ K_r), \]
    with equality if and only if $G=T_{n-r+1,2}\circ K_r$.
\end{corollary}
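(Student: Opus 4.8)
The plan is to derive Corollary~\ref{thm-extension} as an almost immediate consequence of the structural spectral stability in Theorem~\ref{thm-chromatic-spectral-version}, mirroring exactly how Corollary~\ref{thm-edge} was extracted from Theorem~\ref{thm-chromatic-edges-version}. The central observation is that membership in the family $\mathcal{G}_{n,r}$ forces a low chromatic number, so the hypothesis $\chi(G)\ge r$ rules out the generic case of the dichotomy and pins $G$ down to the single exceptional graph $T_{n-r+1,2}\circ K_r$.

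First I would argue the upper bound on $\lambda(G)$. Suppose, for contradiction, that $G$ is an $n$-vertex $C_{2k+1}$-free graph with $\chi(G)\ge r$ but $\lambda(G) > \lambda(T_{n-r+1,2}\circ K_r)$; in particular $\lambda(G)\ge \lambda(T_{n-r+1,2}\circ K_r)$, so Theorem~\ref{thm-chromatic-spectral-version} applies. The theorem yields a dichotomy: either $G\in\mathcal{G}_{n,r}$, or $G = T_{n-r+1,2}\circ K_r$. I would rule out the first alternative via a chromatic-number computation identical to the one in the proof of Corollary~\ref{thm-edge}. If $G\in\mathcal{G}_{n,r}$, then $G$ is built from a bipartite graph $B$ by suspending graphs $G_1,\dots,G_s$ with $\sum_{i=1}^s |V(G_i)\setminus V(B)|\le r-2$. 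Two-colour $B$, and for each $i$ colour the at most $|V(G_i)\setminus V(B)|$ private vertices of $G_i$; since each $G_i$ meets $B$ in a single (already-coloured) vertex, the private vertices of $G_i$ can be properly coloured using at most $|V(G_i)\setminus V(B)|-1$ fresh colours (one colour being saved by reusing a colour compatible with the shared vertex). Hence
\[
\chi(G)\le 2+\sum_{i=1}^s\bigl(|V(G_i)\setminus V(B)|-1\bigr)\le 2+(r-2)=r,
\]
and in fact this gives $\chi(G)\le r-1$ once one observes the shared vertex also absorbs a colour, contradicting $\chi(G)\ge r$. Therefore the first alternative is impossible and we must have $G=T_{n-r+1,2}\circ K_r$, which contradicts the strict inequality $\lambda(G)>\lambda(T_{n-r+1,2}\circ K_r)$. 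This establishes $\lambda(G)\le \lambda(T_{n-r+1,2}\circ K_r)$.

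For the equality characterisation, I would run the same dichotomy at the boundary: if $\lambda(G)=\lambda(T_{n-r+1,2}\circ K_r)$ then $\chi(G)\ge r$ again forces $G\notin\mathcal{G}_{n,r}$ by the colouring bound above, so Theorem~\ref{thm-chromatic-spectral-version} leaves only $G=T_{n-r+1,2}\circ K_r$. Conversely one checks that $T_{n-r+1,2}\circ K_r$ is itself $C_{2k+1}$-free (the clique $K_r$ shares one vertex with the bipartite $T_{n-r+1,2}$, and every odd cycle would have to use edges of $K_r$; since $r\le 2k$, any such cycle is too short to be a $C_{2k+1}$) and has chromatic number exactly $r$, so it is an admissible competitor attaining equality. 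The main obstacle is entirely upstream: the real work lives in proving Theorem~\ref{thm-chromatic-spectral-version}, and in particular the colouring estimate must be delivered cleanly so that the chromatic bound is strictly below $r$ in the $\mathcal{G}_{n,r}$ case — one has to be slightly careful that the shared vertices are counted consistently so as not to lose a colour, but this is a routine bookkeeping argument rather than a genuine difficulty. Given Theorem~\ref{thm-chromatic-spectral-version}, the corollary follows with no further spectral input.
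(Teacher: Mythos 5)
Your proposal is correct and follows exactly the route the paper intends: apply the dichotomy of Theorem \ref{thm-chromatic-spectral-version}, then rule out the $\mathcal{G}_{n,r}$ alternative by the same colouring bound ($\chi(G)\le r-1$ for any graph in $\mathcal{G}_{n,r}$) that the paper uses explicitly to deduce Corollary \ref{thm-edge} from Theorem \ref{thm-chromatic-edges-version}. The only cosmetic point is that the sharpening to $\chi(G)\le r-1$ is cleanest via $\sum_{i=1}^s\bigl(|V(G_i)\setminus V(B)|-1\bigr)\le (r-2)-s\le r-3$ for $s\ge 1$, rather than the informal remark about the shared vertex absorbing a colour, but this does not affect correctness.
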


Corollary \ref{thm-extension} confirms a conjecture proposed by Yuejian Peng in Jiang's master thesis \cite{JP2025}.
Seeking linear bounds, or more precisely, exact thresholds, is regarded as an important and interesting research problem in spectral extremal graph theory; see \cite{LN2021outplanar,ZL2022jgt,LFP2024-triangular,LFP-count-bowtie}.
Our results in Theorem \ref{thm-chromatic-spectral-version} and  Corollary \ref{thm-extension} provide a linear bound on $n$, i.e., $n\ge 712k$, rather than requiring $n$ to be sufficiently large with respect to a fixed $k$.
 Our primary focus was on maintaining the generality and clarity of the argument rather than optimizing the numerical coefficient. We note that a more careful analysis and refinement of various components of our proof could lead to an improvement in this constant.

In recent years, numerous studies have explored the spectral extremal problems for non-bipartite graphs, as seen in \cite{LG2021,LSY2022,GLZ2021,
ZZ2023,LSW2023,LM2025,LLZ2025,FL2025} and references therein.
Undoubtedly, it seems difficult to study the spectral extremal problems for $F$-free graphs with high chromatic number.
We remark that Corollary \ref{thm-extension} could be seen as the first spectral extremal result that studies the maximum spectral radius of an $F$-free graph with high chromatic number.
Equivalently, under the condition that $G$ is $C_{2k+1}$-free and $\lambda (G)> \lambda (T_{n-r+1}\circ K_r)$, then $G$ must be $(r-1)$-partite. This could be considered to be a strong spectral stability result. We believe that it could provide a new understanding of the spectral extremal graph theory and opens up an interesting direction for our future research.

\paragraph{Our approach.}
Let us now briefly explain how our approach differs from the existing methods and the difficulties we encounter. 
Assume that $G$ is an $n$-vertex \( C_{2k+1} \)-free graph with $e(G)\ge \lfloor \frac{(n-r+1)^2}{4} \rfloor + {r \choose 2}$.
We need to show that $G$ is obtained from a bipartite graph by suspending some small graphs.
The key ingredient is to show the existence of a large `dense' $2$-connected bipartite induced subgraph \( G'\) by removing a few number of  vertices with small degree. We shall show that $G'$ has some nice properties; e.g., $G'$ has large order and large minimum degree. Moreover, any two vertices of $G'$ can be joined by a path with every certain length. For simplicity, we say that $G'$ is a \( k \)-dense bipartite subgraph (Definition \ref{def-dense} and Lemma \ref{lem-k-dense-bi-sub}).
 A block of $G$ is  a maximal $2$-connected subgraph of $G$.
The second part of our approach focuses on investigating the structure outside $G'$. The key ingredient in this part is to consider the block of $G$ that contains $G'$ as a subgraph and it allows us to obtain structural information on $G$.
Let $B$ be such a block.
We divide the argument into two cases.
(Case 1.)~Suppose that $B$ is bipartite, then \( G \) exhibits the desired structure, i.e., a bipartite graph $B$ by suspending some graphs, so we are done. (Case 2.)~Suppose that $B$ is not bipartite.  Then we can find a `bad' path $P_{uv}$ whose endpoints $u,v$ are in $G'$ and all other vertices are not in $G'$ (Lemma \ref{lem-bad-for-H}). Finally, if $P_{uv}$ has a small length, then $P_{uv}$ can be extended to an odd cycle $C_{2k+1}$.
Indeed, since $G'$ is a $k$-dense bipartite subgraph, we can find a path in $G'$ starting from $u$ to $v$, which together with $P_{uv}$ yields a copy of $C_{2k+1}$, a contradiction; if $P_{uv}$ has a large length, then by some structural analysis, we can prove that $e(G)< \lfloor \frac{(n-r+1)^2}{4} \rfloor + {r \choose 2}$, which contradicts with the assumption.
In other words, Case 2 does not occur. Therefore, the block of $G$ that contains $G'$ as a subgraph must be bipartite, and \( G \) is obtained by suspending some small graphs to the large bipartite block \( B \).

 Our method somehow provides a unified proof for Theorems \ref{thm-chromatic-edges-version} and  \ref{thm-extension}.  Based on two structural stability results (Lemmas \ref{lem-k-dense-bi-sub} and \ref{lem-bad-for-H}) established in proving Theorem \ref{thm-chromatic-edges-version}, a result of the Brualdi--Hoffman--Tur\'{a}n problem on cycles (Lemma \ref{lem-ZLS}), which allows us to transfer a bound on the spectral radius into a bound on the number of edges, and some other spectral techniques, e.g., the Sun--Das inequality in Lemma \ref{lem-Sun-Das-deletion} and the comparison principle in Lemma \ref{lem-WXH}, we  prove Theorem \ref{thm-extension}.

 \paragraph{More related background.}
In addition to the study on the size and the spectral radius of a graph, there is a rich history of investigating the Tur\'{a}n type problem in terms of the minimum degree.
An extremely difficult and widely investigated problem of Erd\H{o}s and Simonovits \cite{ES1973} asks for the maximum value of the minimum degree in an $n$-vertex $F$-free graph $G$ with chromatic number $\chi (G)\ge t$ for some integer $t$. This problem is also referred to as the chromatic profile.
In other words, an $F$-free graph that satisfies an appropriate condition on minimum degree has a bounded chromatic number. 
This type of result can be referred to as the degree-stability, which is stronger than the classical edge-stability.
We now list some related progress for convenience of the readers.

It is well-known that every non-bipartite triangle-free graph $G$ has the min-degree $\delta (G)\le \frac{2}{5}n$.
Moreover, Jin \cite{Jin1995} proved that if $G$ is a triangle-free graph with $\chi (G)\ge 4$, then $\delta (G)\le \frac{10}{29}n$.
A breakthrough of Brandt and Thomass\'{e} \cite{BT2005} states that if $G$ is triangle-free and $\chi (G)\ge 5$, then $\delta (G)\le \frac{n}{3}$.
This bound on minimum degree cannot be improved by restricting the chromatic number, since a construction of Hajnal shows that
there exists an $n$-vertex triangle-free graph $G$ with $\delta (G)=(\frac{1}{3}- o(1))n$, while $G$ has arbitrarily high chromatic number.
 {For cliques}, Andr\'{a}sfai, Erd\H{o}s and S\'{o}s \cite{AES1974} proved that if $G$ is a non-$r$-partite $K_{r+1}$-free graph, then $\delta (G)\le \frac{3r-4}{3r-1}n$. We refer to \cite{Bra2003} for a simple proof. Goddard and Lyle \cite{GL2011} extended the results of Jin, Brandt and Thomass\'{e} by showing that if $G$ is $K_{r+1}$-free, then $\chi (G)\ge r+2$ implies $\delta (G)\le \frac{19r-28}{19r-9}n$, and $\chi (G)\ge r+3$ implies $\delta (G)\le \frac{2r-3}{2r-1}n$.
This result was independently studied by Nikiforov \cite{Niki2010}.

It was also shown by Andr\'{a}sfai, Erd\H{o}s and S\'{o}s  \cite{AES1974}
 that if $n\ge 2k+3$ and $G$ is a non-bipartite $\{C_3,C_5,\ldots ,C_{2k+1}\}$-free graph of order $n$,
 then $\delta (G)\le \frac{2}{2k+3}n$.
Forbidding a single odd cycle, H\"{a}ggkvist \cite{Hagg1982} showed that for $k\in \{2,3,4\}$, every non-bipartite $C_{2k+1}$-free graph $G$ satisfies $\delta (G)\le \frac{2}{2k+3}n$.
In 2024, Yuan and Peng \cite{YP2024}
studied the case $k\ge 5$ and
proved that for $k\ge 5$ and $n\ge 21000k$, every non-bipartite $C_{2k+1}$-free graph has $\delta (G)\le \frac{n}{6}$. Furthermore, they \cite{YP2025} resolved the problem for forbidden families consisting of some odd cycles. Later, Yan, Peng and Yuan \cite{Yan-Peng-Yuan2024} showed that for $r\ge 4$, $k\ge 3r+1$ and $n\ge 108r^{r-1}k$, if $G$ is an $n$-vertex $C_{2k+1}$-free graph with $\chi (G)\ge r$, then $\delta (G)\le \frac{n}{2r}$.
We mention that all the above bounds on the minimum degree parameter are optimal.
For the sake of brevity, we do not describe the detailed extremal graphs.

 \paragraph{Organization.}
In Section \ref{sec-3}, we introduce some preliminary results {to be applied}. In Section \ref{sec-4}, we show some crucial structural results. For instance, we shall show that the desired extremal graph contains a large $k$-dense bipartite subgraph (Definition \ref{def-dense}). In Sections \ref{sec-5} and \ref{sec-6}, we give the proofs of Theorems \ref{thm-chromatic-edges-version} and \ref{thm-chromatic-spectral-version}, respectively.
In Appendix \ref{App-A}, we provide the proofs of Lemmas \ref{lem-spectral-order1} and \ref{lem-spectral-order2}.

 \paragraph{Notation.}
For a graph $G$ and a  vertex $v\in V(G)$, let $N(v)$ be the set of neighbors of $v$ in $G$. The {\it degree} $d(v)$ of $v$ is equal to $|N(v)|$.
 For $V_1, V_2\subseteq V( G)$, let $E( V_1, V_2)$ denote the set of edges of $G$ between $V_1$ and $V_2$, and let $e(V_1,V_2)=|E(V_1,V_2)|$. For every  $S\subseteq V(G)$, we denote $d_S(v)=|N_S(v)|=|N(v)\cap S|$.
 Let $G[S]$ be the graph induced by $S$ whose vertex set is $S$ and whose edge set consists of the edges of $G$ that have both ends in $S$. We denote by $P_k$ a path with $k$ vertices.
We denote by $P_{uv}$ a path with endpoints $u$ and $v$.
The length of a path is defined as the number of its edges.
A block of a graph $G$ is a maximal $2$-connected subgraph of $G$.

The Perron--Frobenius theorem implies that there exists a
non-negative eigenvector of $A(G)$ corresponding to $\lambda(G)$. This vector is called the Perron vector of $G$.
Throughout the paper, we denote by $\bm{x} := ( x_1, \ldots , x_n)^{T}$ the Perron vector of $G$.
Without loss of generality,
we may assume that $\max\{x_v:v\in {V}(G)\}=1$.
This can be achieved by scaling the Perron eigenvector.
The following two facts will be frequently used in our proofs.  The first fact states that for every $i\in V(G)$, we have
$$
\lambda(G)x_i=\sum_{j\in N(i)}x_j.
$$
The second fact concerns with the Rayleigh quotient:
$$\lambda(G)=\max_{\bm{x}\in\mathbb{R}^n}\frac{\bm{x}^{T}A(G)\bm{x}}{\bm{x}^{T}\bm{x}}
=\max_{\bm{x}\in\mathbb{R}^n}\frac{2}{\bm{x}^{T}\bm{x}} \sum_{ij\in E(G)}x_ix_j,$$
where we write $\sum_{ij\in E(G)}$ for the sum over each edge of $E(G)$ once.

 \section{Preliminaries}

 \label{sec-3}

In this section, we present several lemmas that will be utilized in our proof.

\begin{lemma}[See \cite{Bon1971dm,Woo1972}]  \label{lem-Bon-Woo}
Let $G$ be a  graph  on $n$ vertices
with
\[  e(G)\ge  \left\lfloor \frac{n^2}{4} \right\rfloor +1. \]
Then $G$ contains a cycle  $C_{\ell}$
for each $3\le \ell \le \lfloor \frac{n+3}{2} \rfloor$.
\end{lemma}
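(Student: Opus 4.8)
The plan is to split the target range into three pieces and treat each separately. Write $m := \lfloor (n+3)/2\rfloor$ for the top of the range. I would first show that $G$ contains a triangle (the length $\ell = 3$), then that $G$ has a cycle of length at least $m$ (the top end), and finally that the cycle spectrum of $G$ has no gaps between these two extremes, i.e.\ that $G$ is \emph{weakly pancyclic}. Since weak pancyclicity produces cycles of every length from the girth up to the circumference, these three facts together yield a copy of $C_\ell$ for each $3 \le \ell \le m$, as required.

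The first two pieces are short. Since $e(G) \geq \lfloor n^2/4\rfloor + 1 > \mathrm{ex}(n, C_3) = \lfloor n^2/4\rfloor$, Mantel's theorem guarantees a triangle, so $G$ is non-bipartite with girth $3$. For the long cycle I would invoke the Erd\H{o}s--Gallai circumference bound, in the form that an $n$-vertex graph with circumference at most $c$ has at most $c(n-1)/2$ edges. Taking $c = m-1$, a direct check in both parities shows $\lfloor n^2/4\rfloor + 1 > (m-1)(n-1)/2$---for $n = 2t$ the right-hand side equals $t^2 - t/2 < t^2 + 1$, and for $n = 2t+1$ it equals $t^2 + t < t^2 + t + 1$---so the circumference of $G$ is at least $m$.

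The decisive and hardest piece is weak pancyclicity, which here genuinely relies on the edge bound (it fails for sparse graphs, e.g.\ a disjoint union of a triangle and a long cycle). I would prove it by induction on $n$. If $\delta(G) \geq n/2$, then $G$ is Hamiltonian by Dirac's theorem, and Bondy's pancyclicity theorem forces $G$ to be pancyclic unless $G = K_{n/2,n/2}$; the latter is ruled out because it has exactly $n^2/4$ edges while $e(G) \geq \lfloor n^2/4\rfloor + 1$. Otherwise $G$ has a vertex $v$ with $d(v) \le \lfloor n/2\rfloor$, and since $\lfloor n^2/4\rfloor - \lfloor (n-1)^2/4\rfloor = \lfloor n/2\rfloor$, the graph $G - v$ still satisfies $e(G-v) \geq \lfloor (n-1)^2/4\rfloor + 1$; the induction hypothesis then supplies all cycle lengths up to $\lfloor (n+2)/2\rfloor$ inside $G - v \subseteq G$. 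For even $n$ this already covers $3 \le \ell \le m$, while for odd $n$ only the single top length $m$ remains, and it is delivered by the circumference bound of the previous paragraph together with one cycle-adjustment step.

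The core difficulty, and the main obstacle, is the cycle-adjustment mechanism underlying weak pancyclicity: showing that whenever $G$ has a cycle of length $\ell$ together with a longer cycle, it also has one of length $\ell + 1$. The standard route is to locate, relative to a fixed $\ell$-cycle, either a vertex off the cycle with two consecutive neighbours on it (which lengthens the cycle by one) or a suitable short chord (which shortens it by one), and to argue---through the edge count and a rotation/ear analysis in the style of Bondy and Woodall---that as long as $\ell$ is below the circumference such a configuration must exist. Making this uniform, so that no length is skipped and the bipartite extremal graph $K_{n/2,n/2}$ is the sole exception (excluded here by the single extra edge), is the technical heart of the proof.
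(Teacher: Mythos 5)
There is a genuine gap, and you in effect concede it yourself. Your decomposition (triangle via Mantel, circumference at least $\lfloor (n+3)/2\rfloor$ via Erd\H{o}s--Gallai, then ``no gaps'' in the cycle spectrum) is sound in outline, and the first two pieces are correctly and completely argued: both parity computations for the circumference bound check out, and the vertex-deletion step of your induction is also fine, since $\lfloor n^2/4\rfloor - \lfloor (n-1)^2/4\rfloor = \lfloor n/2\rfloor$ and a vertex of degree at most $\lfloor n/2\rfloor$ exists when $\delta(G) < n/2$. But the entire weight of the argument rests on the ``cycle-adjustment'' claim: that under the edge hypothesis, a cycle of length $\ell$ together with a longer cycle forces a cycle of length $\ell+1$. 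You never prove this; you only describe the kind of configuration (a vertex off the cycle with two consecutive neighbours on it, or a suitable chord) one would want to find, and defer the existence argument to ``a rotation/ear analysis in the style of Bondy and Woodall.'' That deferred step is not a routine verification --- it is precisely the content of the theorems of Bondy (1971) and Woodall (1972) that the lemma is quoting, and it is where all the difficulty lives (as you note, the statement is simply false without the density hypothesis). In particular, in your odd-$n$ case the phrase ``one cycle-adjustment step'' hides the full strength of the unproven mechanism: knowing $G-v$ has a $C_{t+1}$ and $G$ has some cycle of length at least $t+2$ does not yield a $C_{t+2}$ by any argument you have actually supplied.

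For comparison: the paper does not prove this lemma at all. It is invoked as a known black-box result, cited to Bondy's \emph{Large cycles in graphs} and Woodall's \emph{Sufficient conditions for circuits in graphs}, and everything in the paper that depends on it (Lemma \ref{lem-k-dense-bi-sub} and onward) treats it as given. So your proposal is an attempt to reprove a nontrivial classical theorem from scratch; the scaffolding you build around it (Mantel, Erd\H{o}s--Gallai, Dirac plus Bondy's pancyclicity theorem in the dense case, deletion induction) is reasonable and would reduce the lemma to the gap-free property, but the proposal as written does not close that reduction and therefore does not constitute a proof.
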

 Let $g(G)$ and
$c(G)$ be the lengths of a shortest and longest cycle in $G$, respectively.
\begin{lemma}[See \cite{BFG1998}]
\label{lem-weakly-pancyclic}
 If $G$ is a non-bipartite graph with order $n$ and
minimum degree $\delta (G)\ge \frac{n+2}{3}$,
 then $G$ contains  cycles of every length between $g(G)$ and $c(G)$ with $g(G)=3$ or $4$.
\end{lemma}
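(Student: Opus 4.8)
The plan is to prove the two assertions separately: first that $g(G)\in\{3,4\}$, and then that $G$ contains a cycle of every length between $g(G)$ and $c(G)$. Throughout I write $\delta=\delta(G)\ge (n+2)/3$, noting that this forces $\delta\ge 2$ once $n\ge 4$, so $G$ contains a cycle and $g(G),c(G)$ are well defined (the tiny cases $n\le 3$ are trivial). The non-bipartite hypothesis guarantees an odd cycle, so the target interval $[g(G),c(G)]$ genuinely contains cycles of both parities; this hypothesis is essential, since a bipartite graph could never realize the odd lengths in that interval.

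For the girth bound I would argue by contradiction, assuming $g(G)\ge 5$, so that $G$ is $\{C_3,C_4\}$-free. Fix a vertex $v$. Since $G$ is triangle-free, $N(v)$ is independent and $N(u)\setminus\{v\}$ is disjoint from $N(v)$ for every $u\in N(v)$; since $G$ is $C_4$-free, distinct neighbours of $v$ share no common neighbour other than $v$, so the sets $N(u)\setminus\{v\}$ with $u\in N(v)$ are pairwise disjoint and disjoint from $\{v\}\cup N(v)$. Counting the resulting distinct vertices gives
\[
n\ge 1+\delta+\sum_{u\in N(v)}\bigl(d(u)-1\bigr)\ge 1+\delta+\delta(\delta-1)=1+\delta^2.
\]
Substituting $\delta\ge(n+2)/3$ yields $n\ge 1+(n+2)^2/9$, i.e.\ $n^2-5n+13\le 0$, which is impossible since the discriminant $25-52<0$. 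Hence $g(G)\le 4$, and therefore $g(G)\in\{3,4\}$.

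The core of the argument is weak pancyclicity. I would fix a \emph{longest} cycle $C=c_0c_1\cdots c_{t-1}c_0$ of length $t=c(G)$ and aim to realize every length in $[g(G),t]$ from $C$ and the structure around it. Two mechanisms generate shorter cycles: (i) a chord of $C$ joining vertices at cyclic distance $d$ splits $C$ into cycles of lengths $d+1$ and $t-d+1$; in particular a chord skipping exactly one vertex gives a cycle of length $t-1$; (ii) a vertex $v\notin V(C)$ adjacent to two cycle-vertices at cyclic distance $d$ yields, after rerouting through $v$, a cycle of length $t-d+2$, while maximality of $C$ forbids $v$ from being adjacent to two \emph{consecutive} vertices of $C$ (else $C$ would extend to length $t+1$). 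The strategy is to show that the density $\delta\ge(n+2)/3$ produces chords and off-cycle connections of enough distinct spans that the realizable lengths form a contiguous block reaching down to $g(G)$, with the bottom of the range handled directly via the girth-$3$ or $-4$ cycle.

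The main obstacle is precisely this gap-filling: showing that no length $\ell\in[g(G),t]$ is skipped. I would argue by contradiction from a missing length $\ell$: then $C$ has no chord of span $\ell-1$ or $t-\ell+1$, and no off-cycle vertex joins two cycle-vertices at the corresponding distances. Translating these forbidden configurations into upper bounds on the edges incident to $V(C)$ and within $G-V(C)$, and comparing with the lower bound forced by $\delta\ge(n+2)/3$ (using a P\'osa-type rotation of $C$, or reduction to the endpoint-neighbourhoods of a longest path, to control where neighbours can lie), should yield a contradiction. The difficulty is that the bound $(n+2)/3$ is tight, so the edge-counting must be essentially lossless; and the parity interplay — why the chain of realizable lengths never stalls at the even/odd transition — is exactly the point where non-bipartiteness enters and is the subtle step to get right.
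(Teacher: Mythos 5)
The paper offers no proof of this lemma: it is quoted as a black box from Brandt--Faudree--Goddard \cite{BFG1998}, so your proposal is being measured against that theorem itself, not against an argument in the present paper. Your first half is correct and complete: under girth at least $5$, the sets $N(u)\setminus\{v\}$ for $u\in N(v)$ are pairwise disjoint and disjoint from $\{v\}\cup N(v)$, giving $n\ge 1+\delta+\delta(\delta-1)=1+\delta^2$, and substituting $\delta\ge(n+2)/3$ yields $n^2-5n+13\le 0$, which has negative discriminant; hence $g(G)\in\{3,4\}$. That disposes of the girth claim.

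The genuine gap is the second half, which is the actual content of the lemma. Everything from ``The core of the argument'' onward is a strategy outline, not a proof: you list the standard mechanisms (chord splitting of a longest cycle, rerouting through an off-cycle vertex, P\'osa rotation) and then explicitly defer the decisive step --- ruling out a skipped length $\ell\in[g(G),c(G)]$ --- with ``should yield a contradiction'' and ``the subtle step to get right.'' That step \emph{is} the theorem. The difficulty is not merely technical: as you yourself note, the bound $(n+2)/3$ is sharp, so a forbidden-configuration edge count on a single longest cycle must be essentially lossless, and no such count is exhibited; moreover the passage across the even/odd length transition cannot be handled by a parity remark --- non-bipartiteness must enter structurally (in \cite{BFG1998} this is done via a chain of prior results, e.g.\ Brandt's theorem that a non-bipartite graph with more than $(n-1)^2/4+1$ edges is weakly pancyclic of girth $3$, together with circumference bounds and a lengthy case analysis of how neighbourhoods of off-cycle vertices and path endpoints sit on a longest cycle). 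So the proposal proves $g(G)\in\{3,4\}$ but leaves the weak pancyclicity assertion, the heart of the lemma, unestablished; as submitted it is a plan with the hardest step missing rather than a proof.
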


\begin{lemma}[See \cite{EG1959}] \label{Erdos-Gallai}
If $G$ is an $n$-vertex graph with more than
$\frac{1}{2}t(n-1)$ edges, then $G$ contains a cycle with length at least $t+1$, i.e., $c(G)\ge t+1$.
\end{lemma}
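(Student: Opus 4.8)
The plan is to prove the logically equivalent contrapositive: \emph{if $c(G)\le t$, then $e(G)\le \tfrac12 t(n-1)$}, by induction on $n$ (for arbitrary graphs $G$, with $t\ge 2$ fixed), reducing matters to the $2$-connected case. The driving observation is that both sides of the target inequality are additive over the natural decompositions of $G$. If $G$ is disconnected with components $G_1,\dots,G_c$ on $n_1,\dots,n_c$ vertices, then each $c(G_i)\le t$, so the inductive hypothesis gives $e(G_i)\le \tfrac12 t(n_i-1)$, and summing yields $e(G)\le \tfrac12 t(n-c)\le \tfrac12 t(n-1)$. If $G$ is connected but has a cut vertex, I would pass to its blocks $B_1,\dots,B_m$ with $m\ge 2$; the block--tree identity $\sum_i(|V(B_i)|-1)=n-1$, combined with $e(G)=\sum_i e(B_i)$ and $c(B_i)\le c(G)\le t$, reduces the bound to the blocks, each of which has fewer than $n$ vertices. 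Hence it suffices to establish the bound when $G$ is $2$-connected.

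For $2$-connected $G$ I would split on the minimum degree. If $\delta(G)\le t/2$, delete a vertex $v$ of minimum degree; then $G-v$ has $n-1$ vertices and $c(G-v)\le t$, so by induction $e(G-v)\le \tfrac12 t(n-2)$, whence $e(G)=e(G-v)+d(v)\le \tfrac12 t(n-2)+\tfrac12 t=\tfrac12 t(n-1)$. (Here $G-v$ need not be $2$-connected, which is harmless since the inductive hypothesis applies to all graphs on $n-1$ vertices.) If instead $\delta(G)>t/2$, I would invoke the classical Dirac bound that a $2$-connected graph contains a cycle of length at least $\min\{n,2\delta(G)\}$. Since $2\delta(G)>t\ge c(G)\ge \min\{n,2\delta(G)\}$, the minimum cannot be $2\delta(G)$, so it equals $n$; thus $n\le 2\delta(G)$ and $n\le c(G)\le t$, giving $e(G)\le \binom{n}{2}=\tfrac12 n(n-1)\le \tfrac12 t(n-1)$ directly. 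This closes the induction, the base cases $n\le 2$ being immediate for $t\ge 2$.

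The main obstacle is the $2$-connected high-degree case, which rests entirely on the Dirac-type circumference bound $c(G)\ge \min\{n,2\delta(G)\}$; the factor $2$ here is genuinely due to $2$-connectivity, as shown by the extremal configuration of copies of $K_t$ glued at cut vertices, where the per-block degree is large but the circumference stays at $t$. I regard the Dirac bound as the heart of the argument: one takes a longest cycle $C$ and, assuming some vertex lies off $C$, uses $2$-connectivity (a Menger/fan argument yielding two internally disjoint paths from an off-cycle component to distinct vertices of $C$) together with a degree count on the arcs of $C$ to build a longer cycle, contradicting maximality. I would cite this as a standard result rather than reprove it from scratch. Everything else is routine: the additivity reductions and the degree dichotomy are elementary, and the only arithmetic involved is the two short linear estimates displayed above.
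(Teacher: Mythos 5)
The paper offers no proof of this lemma at all: it is imported directly from Erd\H{o}s--Gallai \cite{EG1959} and used as a black box (e.g., inside the proof of Lemma \ref{lem-k-dense-bi-sub} to extract a long cycle from the minimum-degree bound), so there is no in-paper argument to compare yours against. Your proof is correct and self-contained modulo the one classical ingredient you cite, Dirac's theorem that a $2$-connected graph on $n\ge 3$ vertices has circumference at least $\min\{n,2\delta(G)\}$. The reductions are all sound: additivity over components; the block identity $\sum_i(|V(B_i)|-1)=n-1$ together with $e(G)=\sum_i e(B_i)$, $c(B_i)\le c(G)\le t$, and the fact that each block has fewer than $n$ vertices once there are at least two blocks; and, in the $2$-connected case, the dichotomy $\delta(G)\le t/2$ (delete a minimum-degree vertex and apply the inductive hypothesis to the resulting graph on $n-1$ vertices, which indeed need not be $2$-connected) versus $\delta(G)>t/2$ (Dirac forces $\min\{n,2\delta(G)\}=n$, hence $n\le c(G)\le t$ and $e(G)\le \binom{n}{2}\le \tfrac12 t(n-1)$). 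Two small points are worth making explicit: the statement implicitly requires $t\ge 2$ (for $t=1$ a perfect matching is a counterexample), which your base case silently uses; and bridge blocks $K_2$ satisfy the inductive bound vacuously since they contain no cycle. Your argument also works verbatim for real $t\ge 2$, which is relevant here because the paper applies the lemma with $t=\tfrac{2}{5}n$, not necessarily an integer. Finally, your extremal configuration (copies of $K_t$ glued along cut vertices) correctly certifies that the bound $\tfrac12 t(n-1)$ is tight, so nothing stronger could be extracted from the same induction.
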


In what follows, we introduce several results about spectral graph theory.

 \begin{lemma}[See \cite{ZLS2021}]
 \label{lem-ZLS}
If $2\ell\in \mathbb{N}$ and $G$ is a graph with $m$ edges and
\[  \lambda (G) > \frac{\ell-1/2 + \sqrt{4m + (\ell-1/2)^2}}{2} , \]
then $G$ contains a cycle of length $t$
for every $t\le 2\ell +2$.
\end{lemma}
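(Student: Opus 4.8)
The plan is to strip the spectral hypothesis down to a numerical inequality and then feed it into the cycle lemmas already collected in Section~\ref{sec-3}. Write $\beta = \ell - \tfrac12$, so the threshold $\lambda_0 = \tfrac12\bigl(\beta + \sqrt{4m+\beta^2}\bigr)$ is precisely the positive root of $z^2 - \beta z - m = 0$. Hence the hypothesis $\lambda(G) > \lambda_0$ is equivalent to the clean quadratic inequality
\[
\lambda(G)^2 - \Bigl(\ell - \tfrac12\Bigr)\lambda(G) > m, \qquad (\star)
\]
and I would treat $(\star)$ as the only information available about $G$. The orienting observation is the base case $\ell = \tfrac12$, where $2\ell+2 = 3$ and $(\star)$ reads $\lambda^2 > m$ with conclusion ``$G$ has a triangle'': this is exactly Nosal's theorem. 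This strongly suggests proving the general statement by the same local, Perron-weighted counting that proves Nosal, pushed up to longer cycles.

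Concretely, let $\bm x$ be the Perron vector normalised so that $\max_v x_v = x_u = 1$ at some vertex $u$. Expanding $\lambda^2 = \lambda^2 x_u$ via the eigen-equation and using $u \in N(v)$ for every $v \in N(u)$ yields the identity
\[
\lambda^2 = d(u) + \sum_{v\in N(u)} x_v\, d_{N(u)}(v) + \sum_{w\notin N(u)\cup\{u\}} x_w\,|N(u)\cap N(w)|,
\]
in which the first sum records edges inside $N(u)$ (triangles through $u$) and the second records length-two walks into the second neighbourhood (four-cycles and longer structure through $u$). For $\ell = \tfrac12$ one notes that triangle-freeness kills the first sum and bounds the whole right-hand side by $m$, contradicting $(\star)$. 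The content of the lemma is that the same book-keeping, performed under the assumption that some $C_t$ with $t \le 2\ell+2$ is absent, should be pushed to give $\lambda^2 - (\ell-\tfrac12)\lambda \le m$, contradicting $(\star)$; the extra linear term $(\ell-\tfrac12)\lambda$ is exactly the slack one expects to harvest from the absence of a \emph{longer} cycle rather than a triangle.

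To organise this I would first pass to a dense substructure: iteratively delete every vertex of degree at most $\ell-\tfrac12$, observing that such a vertex has small Perron weight (since $x_v \le d(v)/\lambda$), so that the Rayleigh quotient shows $(\star)$ survives for the reduced graph with its smaller edge count. On a suitable $2$-connected, sufficiently large dense piece $H$ of the resulting graph, Nosal's bound already forces girth $3$, the Erd\H{o}s--Gallai estimate (Lemma~\ref{Erdos-Gallai}) forces a long cycle, and the Bondy--Woodall lemma (Lemma~\ref{lem-Bon-Woo}) together with weak pancyclicity (Lemma~\ref{lem-weakly-pancyclic}) fills in every intermediate length down to $3$, producing $C_t$ for all $t \le 2\ell+2$ simultaneously rather than one missing length at a time.

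The hard part will be the quantitative heart: converting the spectral slack in $(\star)$ into a subgraph that is genuinely dense enough — of order at least $4\ell+1$ and with more than $|V|^2/4$ edges, so that Lemma~\ref{lem-Bon-Woo} reaches length $2\ell+2$ — rather than merely into large minimum degree, which on its own only guarantees circumference on the order of $\ell$. This is where the precise Perron-weighted estimate of the second-neighbourhood sum must be squeezed against the absence of the missing cycle, and where the calibration of $\lambda_0$ is essential: complete bipartite graphs sit exactly at $\lambda = \sqrt m \le \lambda_0$, so any correct argument must fail precisely for them and must genuinely exploit the strict inequality. Carrying this estimate through \emph{uniformly} for every missing length $t \in \{3,\dots,2\ell+2\}$, and in particular treating the even-cycle and odd-cycle obstructions on the same footing, is the most delicate piece of the proof.
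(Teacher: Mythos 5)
First, a point of comparison: the paper does not prove Lemma~\ref{lem-ZLS} at all --- it is imported verbatim from Zhai--Lin--Shu \cite{ZLS2021}, where it is a main theorem with a multi-page proof (a local analysis around the vertex of maximum Perron entry, partitioning $V(G)$ into $N(u)$, its second neighbourhood and the rest, and playing the eigen-equation against the forbidden cycles). So there is no in-paper argument to match; judged on its own terms, your proposal is an outline with a hole at exactly the place where the lemma's entire content sits. You say so yourself: converting the spectral slack into a subgraph of order at least $4\ell+1$ with more than $|V|^2/4$ edges is ``the hard part'' and ``the most delicate piece,'' and you do not carry it out. Everything surrounding that step (the reformulation $\lambda^2-(\ell-\tfrac12)\lambda>m$, the Nosal base case, the final invocation of Lemmas~\ref{lem-Bon-Woo}--\ref{lem-weakly-pancyclic}) is routine; the lemma \emph{is} that missing step.

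Two of the intermediate claims also fail as stated. (a) The vertex-deletion step: you assert that deleting a vertex of degree at most $\ell-\tfrac12$ preserves $(\star)$ ``by the Rayleigh quotient,'' but the available estimates do not give this. Lemma~\ref{lem-Sun-Das-deletion} yields $\lambda(G-v)^2-(\ell-\tfrac12)\lambda(G-v)\ge \lambda(G)^2-2d(v)-(\ell-\tfrac12)\lambda(G)>m-2d(v)$, which is weaker than the required $>m-d(v)$; the Perron-weight variant (using $x_v\le d(v)/\lambda$ and the quotient with $v$ removed) loses roughly $2d(v)^2$ instead of $d(v)$, again insufficient. So the reduction to large minimum degree is unjustified, not merely unoptimized. (b) Even granting a reduced graph $H$ with $\delta(H)\approx\ell$, none of the cycle lemmas you invoke applies: Lemma~\ref{lem-Bon-Woo} needs $e(H)>|V(H)|^2/4$, Lemma~\ref{lem-weakly-pancyclic} needs $\delta(H)\ge(|V(H)|+2)/3$, and Lemma~\ref{Erdos-Gallai} needs average degree greater than $2\ell+1$ to force circumference at least $2\ell+2$ --- minimum degree on the order of $\ell$ gives none of these once $|V(H)|$ is large, which is precisely the obstruction you flag and then set aside. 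A correct proof cannot route through large minimum degree at all; it must produce a \emph{small, quadratically dense} piece directly from the strict inequality $\lambda^2-(\ell-\tfrac12)\lambda>m$, and that extraction is what \cite{ZLS2021} actually does and what your attempt leaves open.
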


In the proof of Theorem \ref{thm-chromatic-spectral-version}, we need to use the following spectral inequality, which can be easily deduced from the result of Sun and Das \cite[Theorem 3.2]{Sun-Das}. We refer to Li and Ning \cite[Lemma 3]{LN2023}.
The importance of Lemma \ref{lem-Sun-Das-deletion} in spectral graph theory has been highlighted in a recent report by Ning \cite{Ning2025}. For example, it could be applied to treat the Nikiforov problem on cycles with consecutive lengths \cite{LN2023,Zhang-wq-2024} and the spectral extremal problem on paths with  given length \cite{Ai2024}.

\begin{lemma}[See \cite{Sun-Das,LN2023}] \label{lem-Sun-Das-deletion}
    Let $G$ be a graph. For every $v\in V(G)$, we have
    \[ \lambda^{2}(G-v)\geq\lambda^{2}(G)-2d(v).\]
\end{lemma}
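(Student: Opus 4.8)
The plan is to avoid working with $A(G)$ directly and instead pass to its square $A(G)^2$, so that the target quantity $\lambda^2(G-v)$ appears as the top eigenvalue of a genuine principal submatrix and vertex deletion becomes a controllable rank-one perturbation. Order the vertices so that $v$ comes first and write
\[ A(G)=\begin{pmatrix} 0 & \bm{a}^{T}\\ \bm{a} & B \end{pmatrix}, \]
where $B=A(G-v)$ and $\bm{a}\in\{0,1\}^{n-1}$ is the indicator vector of $N(v)$, so that $\bm{a}^{T}\bm{a}=d(v)$. Squaring shows that the principal submatrix of $A(G)^2$ obtained by deleting the row and column of $v$ equals $B^2+\bm{a}\bm{a}^{T}$. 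Since $B$ is a nonnegative symmetric matrix, the Perron--Frobenius theorem gives $\lambda_{\max}(B^2)=\lambda(G-v)^2$, while $\bm{a}\bm{a}^{T}$ is positive semidefinite with largest eigenvalue $\|\bm{a}\|^2=d(v)$. Hence Weyl's inequality yields
\[ \lambda(G-v)^2=\lambda_{\max}(B^2)\ge \lambda_{\max}\!\left(B^2+\bm{a}\bm{a}^{T}\right)-d(v), \]
and it suffices to prove $\lambda_{\max}(B^2+\bm{a}\bm{a}^{T})\ge \lambda(G)^2-d(v)$.

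For this lower bound I would feed an explicit test vector built from the Perron vector of $G$ into the Rayleigh quotient. Write $\lambda:=\lambda(G)$, normalise $\bm{x}^{T}\bm{x}=1$, and split $\bm{x}=(x_v,\bm{y})$, where $\bm{y}$ is the restriction of $\bm{x}$ to $V(G)\setminus\{v\}$, so $\bm{y}^{T}\bm{y}=1-x_v^2$. The eigen-equation $A(G)\bm{x}=\lambda\bm{x}$ separates into $\bm{a}^{T}\bm{y}=\lambda x_v$ and $B\bm{y}=\lambda\bm{y}-x_v\bm{a}$. Using the second relation one computes $\bm{y}^{T}B^2\bm{y}=\|B\bm{y}\|^2=\lambda^2\bm{y}^{T}\bm{y}-2\lambda^2 x_v^2+x_v^2 d(v)$, and $(\bm{a}^{T}\bm{y})^2=\lambda^2 x_v^2$. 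Substituting $\bm{y}^{T}\bm{y}=1-x_v^2$ gives
\[ \lambda_{\max}\!\left(B^2+\bm{a}\bm{a}^{T}\right)\ge \frac{\bm{y}^{T}(B^2+\bm{a}\bm{a}^{T})\bm{y}}{\bm{y}^{T}\bm{y}}=\frac{\lambda^2-2\lambda^2 x_v^2+x_v^2 d(v)}{1-x_v^2}. \]

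Combining the two displayed inequalities and clearing the positive denominator $1-x_v^2$, the target $\lambda(G-v)^2\ge \lambda(G)^2-2d(v)$ reduces, after cancellation, to the single clean inequality $\lambda(G)^2 x_v^2\le d(v)$. This final reduction is where the actual content of the lemma resides, and it is the only step I expect to require a genuine idea beyond bookkeeping: it must hold for the particular Perron weight $x_v$. I would settle it by Cauchy--Schwarz, since $\lambda x_v=\sum_{j\in N(v)}x_j\le \sqrt{d(v)}\,\big(\sum_{j\in N(v)}x_j^2\big)^{1/2}\le \sqrt{d(v)}$, using $\sum_{j\in N(v)}x_j^2\le \bm{x}^{T}\bm{x}=1$; squaring delivers $\lambda(G)^2 x_v^2\le d(v)$ and completes the proof. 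The main subtlety throughout is sign bookkeeping and the normalisation of $\bm{x}$; the structural point—passing to $A(G)^2$ so that deleting $v$ is a rank-one shift governed by Weyl's inequality—is exactly what makes the gap come out to be precisely $2d(v)$.
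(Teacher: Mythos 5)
Your proof is correct. Note that the paper itself does not prove this lemma; it is quoted from Sun--Das and Li--Ning, so your argument should be judged as a self-contained proof, and it holds up: the principal submatrix of $A(G)^2$ indexed by $V(G)\setminus\{v\}$ is indeed $B^2+\bm{a}\bm{a}^T$; Weyl's inequality gives $\lambda(G-v)^2=\lambda_{\max}(B^2)\ge\lambda_{\max}\bigl(B^2+\bm{a}\bm{a}^T\bigr)-d(v)$; your Rayleigh computation using $B\bm{y}=\lambda\bm{y}-x_v\bm{a}$ and $\bm{a}^T\bm{y}=\lambda x_v$ is exact; and clearing the denominator $1-x_v^2$ does reduce the target precisely to $\lambda(G)^2x_v^2\le d(v)$, which Cauchy--Schwarz supplies. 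It is worth comparing with the direct route found in the cited sources: there one plugs the truncated eigenvector $\bm{y}$ straight into the Rayleigh quotient of $A(G-v)^2$ alone, i.e.\ bounds $\lambda(G-v)^2\ge \bm{y}^TB^2\bm{y}/\bm{y}^T\bm{y}$, and the final reduction then becomes $2\lambda^2x_v^2+d(v)x_v^2\le 2d(v)$, which the crude bound $\lambda^2x_v^2\le d(v)$ does \emph{not} close; one must instead use the sharper Cauchy--Schwarz estimate $\lambda^2x_v^2\le d(v)\bigl(1-x_v^2\bigr)$, valid because $\sum_{j\in N(v)}x_j^2\le\|\bm{y}\|^2$ as $v\notin N(v)$. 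Your rank-one augmentation trades this sharpness away: the term $(\bm{a}^T\bm{y})^2=\lambda^2x_v^2$ you add to the numerator compensates exactly for the $d(v)$ surrendered in the Weyl step, so the weak Cauchy--Schwarz suffices. The two arguments are cousins of comparable length; yours isolates the "content" in a cleaner final inequality. One pedantic repair: dividing by $1-x_v^2$ requires $x_v<1$; with $\|\bm{x}\|=1$, the case $x_v=1$ forces $\bm{y}=0$, hence $\lambda(G)=0$ and $d(v)=0$, where the lemma is trivial, so this degenerate case should be dismissed in one sentence before the division.
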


The following lemma \cite{WXH2005} is also needed in our proof of Theorem \ref{thm-chromatic-spectral-version},
it provides an operation of a connected graph which increases the adjacency spectral radius strictly.

\begin{lemma}[See \cite{WXH2005}] \label{lem-WXH}
Let $G$ be a connected graph
and $(x_1,\ldots ,x_n)^T$ be a Perron vector of $G$,
where $x_i$ corresponds to $v_i$.
Assume that
 $v_i,v_j \in V(G)$ are vertices such that $x_i \ge x_j$, and $S\subseteq N_G(v_j) \setminus N_G(v_i)$ is non-empty.
 Denote $G^*:=G- \{v_jv : v\in S\} +
\{v_iv : v\in S\}$. Then $\lambda (G) < \lambda (G^*)$.
\end{lemma}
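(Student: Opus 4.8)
The plan is to test the Perron vector of $G$ against the adjacency matrix of $G^{*}$ and exploit the Rayleigh quotient characterisation of the spectral radius recorded in the Notation section. Write $\bm{x}=(x_{1},\dots,x_{n})^{T}$ for the Perron vector of $G$ and set $\lambda:=\lambda(G)$; since $G$ is connected, the Perron--Frobenius theorem guarantees that $\bm{x}$ is entrywise positive. The construction of $G^{*}$ only deletes the edges $\{v_{j}v:v\in S\}$ and adds the edges $\{v_{i}v:v\in S\}$, and (as is implicit in the statement, so that $G^{*}$ is simple) we have $S\cap\bigl(N_{G}(v_{i})\cup\{v_{i}\}\bigr)=\emptyset$; hence $N_{G^{*}}(v_{i})=N_{G}(v_{i})\cup S$ is a disjoint union.

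First I would compute how the quadratic form changes when we pass from $A(G)$ to $A(G^{*})$ on the fixed vector $\bm{x}$. Cancelling the common terms leaves only the contributions of the switched edges, giving
\[
\bm{x}^{T}A(G^{*})\bm{x}-\bm{x}^{T}A(G)\bm{x}
=2(x_{i}-x_{j})\sum_{v\in S}x_{v}.
\]
Because $x_{i}\ge x_{j}$, every entry $x_{v}$ is positive, and $S\neq\emptyset$, the right-hand side is nonnegative. Combining this with $\bm{x}^{T}A(G)\bm{x}=\lambda\,\bm{x}^{T}\bm{x}$ and the Rayleigh bound $\lambda(G^{*})\ge \bm{x}^{T}A(G^{*})\bm{x}/\bm{x}^{T}\bm{x}$ immediately yields $\lambda(G^{*})\ge\lambda(G)$, with a strict gap as soon as $x_{i}>x_{j}$.

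The main (and essentially only) obstacle is to upgrade this to a strict inequality in the borderline case $x_{i}=x_{j}$, where the quadratic form does not change at all. Here I would argue by contradiction: assuming $\lambda(G^{*})=\lambda$, the chain of inequalities above forces $\bm{x}^{T}A(G^{*})\bm{x}=\lambda(G^{*})\,\bm{x}^{T}\bm{x}$, so $\bm{x}$ attains the maximum of the Rayleigh quotient of the symmetric matrix $A(G^{*})$ and is therefore an eigenvector of $A(G^{*})$ for $\lambda$, i.e.\ $A(G^{*})\bm{x}=\lambda\bm{x}$. Reading off the $v_{i}$-coordinate of this identity and using $N_{G^{*}}(v_{i})=N_{G}(v_{i})\cup S$ gives $\lambda x_{i}=\sum_{u\in N_{G}(v_{i})}x_{u}+\sum_{v\in S}x_{v}$, whereas the eigen-equation for $\bm{x}$ in $G$ reads $\lambda x_{i}=\sum_{u\in N_{G}(v_{i})}x_{u}$. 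Subtracting gives $\sum_{v\in S}x_{v}=0$, contradicting the positivity of $\bm{x}$ together with $S\neq\emptyset$. Hence $\lambda(G^{*})>\lambda(G)$ in every case, which is the assertion. The one subtlety worth double-checking is the positivity of the Perron vector, used both to sign the form change and to derive the final contradiction; this is exactly where the connectedness hypothesis on $G$ enters.
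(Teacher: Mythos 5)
Your proof is correct and coincides with the standard argument: the paper does not prove this lemma itself but cites it from \cite{WXH2005}, and the proof there is exactly your Rayleigh-quotient comparison using the Perron vector of $G$, with the equality case ruled out by reading the eigen-equation of $A(G^*)$ at $v_i$ against that of $A(G)$. Your handling of the implicit requirement $S\cap\bigl(N_G(v_i)\cup\{v_i\}\bigr)=\emptyset$ and of the borderline case $x_i=x_j$ is sound, so nothing is missing.
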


  We denote by \( \mathcal{G}_{n,r}^* \) the family of \( n \)-vertex graphs that are obtained from a bipartite graph \( B \) by suspending graphs \( G_1, \ldots, G_s \) for some $s\in \mathbb{N}$ such that \( \sum_{i=1}^s |V(G_i) \setminus V(B)|= r-2 \).
  In other words, there are exactly $r-2$ vertices of $G$ that are not contained in the bipartite subgraph $B$.

\begin{lemma}\label{lem-spectral-order1}
    Let $n\geq 100k$ and $3\le r\leq 10k$ be integers. If $G\in\mathcal{G}^*_{n,r}$, then
    $$
    \lambda(G)\leq \lambda(T_{n-r+2,2}\circ K_{r-1}).
    $$
Moreover, the equality holds if and only if $G=T_{n-r+2,2}\circ K_{r-1}$.
\end{lemma}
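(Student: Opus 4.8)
The plan is to take a graph $G\in\mathcal{G}^*_{n,r}$ of maximum spectral radius and push it toward the claimed extremal graph by a sequence of operations that stay inside $\mathcal{G}^*_{n,r}$ while strictly increasing $\lambda$. Write $G=B\cup G_1\cup\cdots\cup G_s$, where $B$ is the bipartite part with bipartition $(X,Y)$, each $G_i$ meets $B$ in a single attachment vertex $b_i$, and the $r-2$ vertices outside $B$ are the \emph{extra} vertices; let $\bm{x}$ be the Perron vector. We may assume $G$ is connected and each $G_i$ is connected: adding an edge between $X$ and $Y$ keeps $B$ bipartite and $G$ inside $\mathcal{G}^*_{n,r}$, while extra vertices sitting in a component disjoint from $B$ can only lower $\lambda$. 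With $G$ connected, if some pair $u\in X$, $v\in Y$ has $uv\notin E(G)$, then $G+uv\in\mathcal{G}^*_{n,r}$ and Perron--Frobenius gives $\lambda(G+uv)>\lambda(G)$, contradicting maximality; hence $B=K_{p,q}$ with $p+q=n-r+2$.

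Next I consolidate the suspended parts. Suppose $G$ has two distinct attachment vertices $b_1,b_2$ with, say, $x_{b_1}\ge x_{b_2}$. Since $G$ is connected the set $S$ of extra vertices adjacent to $b_2$ but not to $b_1$ is nonempty, and relocating these edges from $b_2$ to $b_1$ is exactly the operation of Lemma~\ref{lem-WXH}, which strictly increases $\lambda$; the resulting graph remains in $\mathcal{G}^*_{n,r}$, since the extra vertices now hang off $b_1$ and $B$ is untouched. Iterating, all $r-2$ extra vertices form one suspended graph at a single vertex $b^{\ast}$, which we may take to have maximum Perron weight among $V(B)$. Finally, adding every missing edge inside $\{b^{\ast}\}\cup\{\text{extra vertices}\}$ keeps these vertices attached to $B$ only through $b^{\ast}$, hence keeps $G\in\mathcal{G}^*_{n,r}$, and strictly increases $\lambda$ until that set induces $K_{r-1}$. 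At this point $G=K_{p,q}\circ K_{r-1}$ with the clique identified at a vertex $b^{\ast}$ of one part.

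It remains to optimize over $p+q=n-r+2$ and over the choice of the part containing $b^{\ast}$. Using the equitable partition $\{b^{\ast}\},\,X\setminus\{b^{\ast}\},\,Y,\,\text{extra}$, the value $\lambda=\lambda(G)$ is the largest root of the associated $4\times4$ quotient matrix; eliminating the coordinates of the Perron vector (and using $\lambda>r-2$, which holds since $\lambda\ge\lambda(K_{p,q})$ is large when $n-r+2\gg r$, so all denominators are positive) reduces the system to the single relation
\[
\bigl(\lambda^{2}-(p-1)q\bigr)\cdot\frac{\lambda^{2}-(r-3)\lambda-(r-2)}{\lambda(\lambda-r+3)}=q .
\]
Attaching the clique to $Y$ instead gives the analogous relation with $(p,q)$ swapped. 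I would treat $\lambda$ as an implicit function of the split, evaluate these relations at the competing values of $p$, and compare signs to show that the largest root strictly increases as the two parts become balanced and as $b^{\ast}$ is moved to the \emph{smaller} part (the smaller part carries the larger degree $q$, hence larger Perron weight, which is where the clique should sit). The hypotheses $n\ge100k$ and $r\le10k$, which make $n-r+2$ large relative to $r$, are what make these estimates go through and keep every inequality strict.

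The main obstacle is this last step: establishing monotonicity of the largest root of the quartic relation in the split parameter $p$, together with ruling out attachment to the larger part. This is the only genuinely computational portion, and it is where the quantitative bounds on $n$ and $r$ are consumed; the earlier reductions are clean consequences of edge addition and the comparison principle (Lemma~\ref{lem-WXH}). Since each operation strictly increases $\lambda$ unless $G$ already has the corresponding target feature, and the final comparison is strict whenever the split is unbalanced or the clique sits in the larger part, the unique maximizer is $T_{n-r+2,2}\circ K_{r-1}$, yielding both the bound and the equality characterization.
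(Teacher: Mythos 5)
Your reductions are sound and, in fact, track the paper's own proof almost step for step: the consolidation of all suspended vertices onto a single attachment vertex via Lemma \ref{lem-WXH}, the completion of the suspended part to $K_{r-1}$ and of $B$ to a complete bipartite graph by maximality, and the equitable partition $\{b^{\ast}\},\,X\setminus\{b^{\ast}\},\,Y,\,\text{extra}$ with its $4\times 4$ quotient matrix are all exactly what the paper does (your displayed relation is a correct rearrangement of the eigenvector equations for that quotient matrix). The problem is that you stop at precisely the point where the lemma still has to be proved. Everything up to ``$G=K_{p,q}\circ K_{r-1}$'' is soft; the entire content of the lemma, and the only place the hypotheses $n\ge 100k$, $r\le 10k$ are used, is the comparison over the splits $p+q=n-r+2$. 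You describe this step as something you ``would'' do by treating $\lambda$ as an implicit function of $p$ and proving monotonicity of the largest root of a quartic, and you yourself flag it as the main obstacle. That is a genuine gap: an implicit-function/monotonicity analysis of the largest root of a one-parameter family of quartics is not routine, and nothing in your write-up shows it goes through (nor how strictness for the equality case would be extracted from it).

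For comparison, the paper closes this step without any monotonicity argument. Writing $a=p-1$, $b=q$, it reads off from the quotient matrix that $\lambda(G)$ is the largest root of
\[
f_{a,b}(x)=x^4-(r-3)x^3-\bigl((a+1)b+r-3\bigr)x^2+(a+1)b(r-3)x+ab(r-2),
\]
and compares $f_{a,b}$ directly with the polynomial $g$ of the target graph $T_{n-r+2,2}\circ K_{r-1}$ at the single point $\theta=\lambda(T_{n-r+2,2}\circ K_{r-1})$. The difference collapses to
\[
f_{a,b}(x)-g(x)=\Bigl(\Bigl\lfloor\tfrac{(n-r+2)^2}{4}\Bigr\rfloor-(a+1)b\Bigr)(x-r+2)(x+1)+\Bigl(\Bigl\lceil\tfrac{n-r+2}{2}\Bigr\rceil-b\Bigr)(r-2),
\]
and since $\theta>r-1$ gives $(\theta-r+2)(\theta+1)>r-2$, one gets $f_{a,b}(\theta)\ge\bigl(\lfloor\tfrac{(n-r+2)^2}{4}\rfloor+\lceil\tfrac{n-r+2}{2}\rceil-(a+1)b-b\bigr)(r-2)\ge 0$ by AM--GM applied to $(a+2)+b=n-r+3$, with equality pinning down $a=\lfloor\tfrac{n-r+2}{2}\rfloor-1$, $b=\lceil\tfrac{n-r+2}{2}\rceil$. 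Note also that this single parametrization by $(a,b)$ already covers both the balancedness question and the question of which side the clique attaches to, so there is no separate ``ruling out attachment to the larger part'' case. If you replace your deferred monotonicity plan by this one-point polynomial comparison, your argument becomes a complete proof essentially identical to the paper's.
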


\begin{lemma}\label{lem-spectral-order2}
    Let $n\geq 100k$ and $3\le r<s\leq 10k$ be integers. Then
    $$
    \lambda(T_{n-s+2,2}\circ K_{s-1})<\lambda(T_{n-r+2,2}\circ K_{r-1}).
    $$
\end{lemma}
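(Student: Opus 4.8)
The plan is to reduce the claim to consecutive indices and then compare the two spectral radii through the characteristic polynomials of equitable quotient matrices. Write $G_j := T_{n-j+2,2}\circ K_{j-1}$, so that the statement asserts $\lambda(G_s) < \lambda(G_r)$ for $r<s$. It suffices to prove $\lambda(G_j) > \lambda(G_{j+1})$ for every $j$ with $3 \le j \le s-1$ and then chain the inequalities from $j=r$ up to $j=s-1$. To analyse $\lambda(G_j)$, I would use the equitable partition of $G_j$ into four classes: the larger part $L$ (size $p := \lceil(n-j+2)/2\rceil$) and the smaller part minus the cut-vertex (size $q-1$, where $q := \lfloor(n-j+2)/2\rfloor$) of the bipartite block, the shared cut-vertex $w$, and the remaining clique vertices (size $j-2$). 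Since the automorphism group of $G_j$ acts transitively on each class, the Perron vector is constant on classes, so $\lambda(G_j)$ is the largest root of the quotient matrix's characteristic polynomial, which a direct $4\times 4$ determinant computation gives as
$$\phi_j(\lambda) = \lambda\,(\lambda - (j-3))(\lambda^2 - pq) - (j-2)\bigl(\lambda^2 - p(q-1)\bigr),\qquad pq=\left\lfloor\tfrac{(n-j+2)^2}{4}\right\rfloor.$$

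The crux is to extract a sharp description of the Perron root. Setting $\lambda_0 := \lambda(G_j)$ and $\epsilon_j := \lambda_0^2 - pq$, the equation $\phi_j(\lambda_0)=0$ rearranges to the exact identity
$$\epsilon_j = \frac{(j-2)\,p}{\lambda_0^2 - (j-3)\lambda_0 - (j-2)}.$$
Because $G_j \supseteq T_{n-j+2,2}$, monotonicity of the spectral radius gives $\lambda_0 \ge \sqrt{pq} \ge 45k$; together with $j \le 10k$ and $n \ge 100k$ one checks that the denominator exceeds $25k\,\lambda_0$, whence $0 < \epsilon_j < 1$. Applying the identical computation to $G_{j+1}$ (replace $j$ by $j+1$, and $p,q$ by $p' := \lceil(n-j+1)/2\rceil$, $q' := \lfloor(n-j+1)/2\rfloor$) yields $\lambda(G_{j+1})^2 = p'q' + \epsilon_{j+1}$ with $0 < \epsilon_{j+1} < 1$ by the same argument.

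The comparison is then elementary. A floor computation gives
$$pq - p'q' = \left\lfloor\tfrac{(n-j+2)^2}{4}\right\rfloor - \left\lfloor\tfrac{(n-j+1)^2}{4}\right\rfloor \ge \frac{n-j+1}{2} \ge 45k,$$
so
$$\lambda(G_j)^2 - \lambda(G_{j+1})^2 = (pq - p'q') + (\epsilon_j - \epsilon_{j+1}) \ge 45k - 1 > 0,$$
which gives $\lambda(G_j) > \lambda(G_{j+1})$; chaining over $j=r,\dots,s-1$ completes the proof. Note this route avoids any direct sign-chasing of $\phi_{j+1}$ at $\lambda(G_j)$ or interlacing-based root-separation, since the correction terms $\epsilon_j,\epsilon_{j+1}$ are each controlled absolutely.

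I expect the main obstacle to be the uniform bound $\epsilon_j < 1$ across the entire range $3 \le j \le 10k$. Here one must guarantee that the clique's contribution to $\lambda(G_j)^2$, governed by the small denominator in the identity for $\epsilon_j$ and growing with the clique size $j-2$, stays negligible against the $\Theta(n)$ gap $pq - p'q'$. This is precisely where the linear hypothesis $n \ge 100k$ enters: it forces $\sqrt{pq} \ge 45k$ to dominate both $j-2 \le 10k$ and the lower-order terms $(j-3)\lambda_0$ and $(j-2)$ in the denominator. A substantially weaker bound on $n$ relative to $k$ would allow $\epsilon_j$ to grow and could, in principle, overturn the comparison, so the estimates bounding the denominator from below are the delicate part of the argument.
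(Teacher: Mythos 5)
Your proof is correct, and it takes a genuinely different route from the paper's. The paper deduces this lemma from Lemma \ref{lem-spectral-order1}: starting from $T_{n-s+2,2}\circ K_{s-1}$, it rewires the $s-r$ surplus clique vertices into the larger bipartite class, shows via Perron-entry estimates and the Rayleigh quotient that this strictly increases the spectral radius, notes that the resulting graph lies in $\mathcal{G}^*_{n,r}$, and then invokes Lemma \ref{lem-spectral-order1} to cap its spectral radius by $\lambda(T_{n-r+2,2}\circ K_{r-1})$. You instead compare consecutive members of the family directly through their equitable quotient matrices, pinning each Perron root by the exact identity $\lambda(G_j)^2=\big\lfloor (n-j+2)^2/4\big\rfloor+\epsilon_j$ with $0<\epsilon_j<1$, so that the floor gap $pq-p'q'\ge (n-j+1)/2\ge 45k$ forces the strict ordering, and chaining over $j=r,\dots,s-1$ finishes. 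I checked the details: your characteristic polynomial is correct --- it expands to $x^4-(j-3)x^3-(pq+j-2)x^2+pq(j-3)x+(j-2)p(q-1)$, and note that its $x^2$ coefficient differs from the paper's displayed $f_{a,b}$, whose $r-3$ there should read $r-2$ (a typo one can confirm on the case $j=3$; it is harmless in the paper because that term cancels in the difference $f_{a,b}-g$, and your computation silently corrects it); the rearrangement giving $\epsilon_j=(j-2)p/(\lambda_0^2-(j-3)\lambda_0-(j-2))$ is exact; and the bounds $\lambda_0\ge\sqrt{pq}\ge 45k$, denominator $\ge 25k\lambda_0$, hence $\epsilon_j<1$, hold throughout $3\le j\le 10k$ under $n\ge 100k$. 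As for what each approach buys: the paper's argument is shorter given that Lemma \ref{lem-spectral-order1} is already proved and reuses its Perron-vector machinery, while yours is self-contained (independent of Lemma \ref{lem-spectral-order1} and of any eigenvector estimates) and quantitative, yielding the explicit separation $\lambda(G_j)^2-\lambda(G_{j+1})^2\ge 45k-1$ rather than mere strict ordering.
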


For brevity, the proofs of {Lemmas \ref{lem-spectral-order1} and  \ref{lem-spectral-order2}} will be postponed to the Appendix \ref{App-A}.

\section{Some crucial lemmas}
\label{sec-4}

In this section, we present the key lemmas in our proofs of Theorems \ref{thm-chromatic-edges-version} and \ref{thm-chromatic-spectral-version}.
The first result says that if a bipartite graph has large minimum degree, then for any two distinct vertices $u,v$, we can find a path starting from $v$ to $u$ with any length at most $2k+1$.

 \begin{lemma}\label{lem-edgeversion-greedpath}
    Let $n\ge 100k$ and $G$ be {an} $n$-vertex bipartite graph with minimum degree $\delta(G) {>} \frac{2}{5}n$. Let $v,u\in {V(G)}$ be two distinct vertices of $G$. The following statements hold.
    \begin{itemize}
        \item[\rm (a)] If $v$ and $u$ are in the same part, then for each $h\in\{3,5,\ldots,2k+1\}$, there exists a path from $v$ to $u$ with order $h$.
        \item[\rm (b)] If $v$ and $u$ are in different parts, then for each $s\in \{4,6,\ldots,2k+2\}$, there exists a path from $v$ to $u$ with order $s$.
    \end{itemize}
\end{lemma}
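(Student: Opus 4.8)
The plan is to construct the required paths greedily, exploiting the fact that in a bipartite graph with minimum degree exceeding $\frac{2}{5}n$ any two vertices lying in the \emph{same} part have a large common neighbourhood. First I would record the basic size estimates: since every vertex sends all of its more than $\frac{2}{5}n$ neighbours into the opposite part, the two parts $X,Y$ satisfy $\frac{2}{5}n<|X|,|Y|<\frac{3}{5}n$. Consequently, if $a,b$ lie in the same part (say both in $X$), then $|N(a)\cap N(b)|\ge d(a)+d(b)-|Y|>\frac{4}{5}n-\frac{3}{5}n=\frac{n}{5}$; thus any two same-part vertices share more than $\frac{n}{5}\ge 20k$ common neighbours. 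This estimate is the engine of the whole argument.

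The observation that unifies (a) and (b) is a parity one. Write the target order as $\ell$ (so $\ell=h$ in (a) and $\ell=s$ in (b)). In a path $v=p_1,p_2,\ldots,p_\ell=u$, the part containing $p_i$ is determined by the parity of $i$; since $\ell-2$ and $\ell$ have the same parity, the vertex $p_{\ell-2}$ always lies in the same part as $u$. Hence in both cases it suffices to build a path $v=p_1,\ldots,p_{\ell-2}$ of order $\ell-2$ starting at $v$, avoiding $u$, and then to attach $u$ through a common neighbour $p_{\ell-1}$ of $p_{\ell-2}$ and $u$. I would grow $p_1,\ldots,p_{\ell-2}$ one vertex at a time: having a path $p_1,\ldots,p_i$ with $i<\ell-2$, pick $p_{i+1}$ to be any neighbour of $p_i$ outside $\{p_1,\ldots,p_i,u\}$. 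Such a choice exists because $d(p_i)>\frac{2}{5}n\ge 40k$ while the forbidden set has size at most $\ell-2\le 2k$. Finally, as $p_{\ell-2}$ and $u$ lie in the same part and $p_{\ell-2}\ne u$, they share more than $\frac{n}{5}\ge 20k$ common neighbours, and at most $\ell-2\le 2k$ vertices have been used; so an unused common neighbour $p_{\ell-1}$ is available, and setting $p_\ell=u$ completes a path of order $\ell$ from $v$ to $u$.

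The two stated cases are precisely the two parities of $\ell$: in (a) the vertices $v,u$ lie in one part and $\ell=h\in\{3,5,\ldots,2k+1\}$ is odd, so $\ell-2$ is odd and $p_{\ell-2}$ lies in $u$'s part; in (b) $v,u$ lie in different parts and $\ell=s\in\{4,6,\ldots,2k+2\}$ is even, again giving $p_{\ell-2}$ in $u$'s part. The smallest instances degenerate gracefully: when $\ell=3$ the ``path of order $\ell-2$'' is the single vertex $v$, and when $\ell=4$ it is a single edge $vp_2$ (here I only need $p_2\ne u$, which holds since $d(v)>1$), after which the common-neighbour step attaches $u$ directly.

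I do not expect a serious obstacle, since the numerical slack is enormous: $n\ge 100k$ forces every degree and every same-part common neighbourhood to have size at least $20k$, whereas the number of vertices consumed at any stage never exceeds about $2k$. The only points genuinely requiring care are the parity bookkeeping that guarantees $p_{\ell-2}$ lands in the same part as $u$, and the discipline of reserving $u$ (never selecting it prematurely during the greedy growth) so that it can be installed as the final endpoint through an unused common neighbour.
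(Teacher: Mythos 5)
Your proof is correct and follows essentially the same strategy as the paper's: greedily grow a path of order $\ell-2$ from $v$ (reserving $u$), then close it to $u$ through the large common neighbourhood of $p_{\ell-2}$ and $u$, which lie in the same part. The parity observation unifying cases (a) and (b) is a minor presentational tidiness; the paper handles (b) by the same argument "similarly," so there is no substantive difference.
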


\begin{proof}
Let $V(G)=V_1\sqcup V_2$ be a bipartition of vertices of $G$. For part (a), we may assume that $v,u\in V_1$. Then
    $d_{V_2}(v),d_{V_2}(u)>\frac{2}{5}n$.
 Next, we can greedily find a path of order $h$ starting from $v$ and ending at $u$.
     Indeed, we denote $v_1:=v$ and
     $v_{h}:=u$. Since $d_{V_2}(v_1)>\frac{2}{5}n$,
     we can choose a vertex $v_2\in N_{V_2}(v_1)\setminus \{v_{h+1}\}$.
     Since $v_2\in V_2$, we now
     consider the neighbors of $v_2$ in
     $V_1$. More precisely, a direct computation gives
     \begin{align*}
     |N_{V_1}(v_2)| >\frac{2}{5}n>4k.
     \end{align*}
     Similarly, we can find  $v_3,v_5, \ldots ,v_{h-2}\in V_1$ and
     $v_4,v_6,\ldots ,v_{h-3} \in V_2$
     such that $v_iv_{i+1}\in E({G})$ for each $1\le i\le h-3$. Finally, we show that we can choose a vertex $v_{h-1}\in  V_2$ such that
     $v_{h-2}v_{h-1}, v_{h-1}v_{h}\in E(G)$. This is available for our purpose, since
     $d_{V_2}(v_{h-2}),d_{V_2}(v_{h}) \ge \frac{2}{5}n$. Moreover, we know that $|V_2|< \frac{3}{5}n$.
     Then for $n\ge 100k$,
     \[ |N_{V_2}(v_{h-2}) \cap N_{V_2}(v_{h}) | >\frac{4}{5}n-\frac{3}{5}n>3k\]
Therefore, we can choose a vertex $v_{h-1}\in V_2$ that joins both $v_{h-2}$ and $v_{h}$.
     In conclusion,
     we find a path starting from $v$ to $u$ with order $h$.
Similarly, if $v$ and $u$ are in different partite sets of $G$, we can prove that for each $s\in\{4,6,\ldots,2k+2 \}$, there exists a path from $v$ to $u$ with order $s$.
\end{proof}

\subsection{$k$-dense bipartite subgraph}
 \label{sec-app}

\begin{defn} \label{def-dense}
Let \( G \) be a graph and \( H \) be a subgraph of \( G \). We say that \( H \) is
a {\it $k$-dense bipartite subgraph} of \( G \) if it satisfies the following three conditions.
\begin{itemize}
    \item[(a)]
    $H$ is bipartite and $2$-connected.

    \item[(b)]
    For any two vertices $v, u$ in the same part of \( H \), and for each integer $h\in\{5,\ldots,2k+1\}$, there exists a path from $v$ to $u$ with order $h$ in $H$.

    \item[(c)]
    For any two vertices $v, u$ from different parts of \( H \), and for each integer $s\in\{6,\ldots,2k+2\}$, there exists a path from $v$ to $u$ with order $h$ in $H$.
\end{itemize}
\end{defn}

 The following result shows that every $C_{2k+1}$-free graph on $n$ vertices with at least $\frac{(n-c)^2}{4}$ edges contains a $k$-dense bipartite subgraph with large order and minimum degree. This result plays a key role in our argument. 

\begin{lemma}\label{lem-k-dense-bi-sub}
    Let $c, k\geq 2$, and $n\geq\max\{50c,50k\}$ be integers. If $G$ is {an} $n$-vertex $C_{2k+1}$-free graph with $e(G)\geq \frac{(n-c)^2}{4}$, then
    $G$ contains two subgraphs $F$ and $G'$ with $F\subseteq G'$ such that
    \begin{itemize}
        \item[\rm (a)]
       $|V(F)|\geq n-10c$ and $\delta(F)\geq \frac{2}{5}n$. Moreover, $d_{V(F)}(v)<\frac{2}{5}n$ for any $v\in V(G)\setminus V(F)$.

        \item[\rm (b)]
        $|V(G')|\geq n-2c$ and $\delta(G')\geq 11c$. Moreover, $d_{V(G')}(v)<11c$ for any $v\in V(G)\setminus V(G')$.

        \item[\rm (c)] Both $F$ and $G'$ are $k$-dense bipartite subgraphs.
    \end{itemize}
\end{lemma}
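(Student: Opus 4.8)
The plan is to realize $F$ and $G'$ as two cores of $G$: let $G'$ be the $11c$-core and $F$ the $\lceil\tfrac25 n\rceil$-core, i.e. the subgraphs obtained by repeatedly deleting vertices whose current degree falls below $11c$ (resp.\ below $\tfrac25 n$). By construction $\delta(G')\ge 11c$ and $\delta(F)\ge\tfrac25 n$, and the ``moreover'' clauses in (a) and (b) are automatic, since at the moment a vertex $v$ is deleted its degree into the eventual core is at most its current degree, hence below the relevant threshold. As $\tfrac25 n\ge 20c>11c$, the subgraph $F$ has minimum degree exceeding $11c$, so by monotonicity of cores $V(F)\subseteq V(G')$ and $F\subseteq G'$. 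To bound the number of deleted vertices I would argue by contradiction: had $2c$ vertices been removed in forming $G'$, fewer than $11c\cdot 2c=22c^2$ edges would be gone, yet the resulting $C_{2k+1}$-free graph on $n-2c\ge 48k$ vertices has at most $\lfloor(n-2c)^2/4\rfloor$ edges by the F\"uredi--Gunderson theorem; comparing with $e(G)\ge(n-c)^2/4$ gives $c(2n-3c)<88c^2$, contradicting $n\ge 50c$. The identical estimate with threshold $\tfrac25 n$ and $10c$ deletions (destroying fewer than $4nc$ edges) yields $9c(2n-11c)<16nc$, again contradicting $n\ge 50c$. Thus $|V(G')|\ge n-2c$ and $|V(F)|\ge n-10c$, settling (a) and (b).

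Next I would verify that $F$ is a $k$-dense bipartite subgraph. Write $n_F=|V(F)|\ge n-10c\ge\tfrac45 n\ge 40k$. If $F$ were non-bipartite, then since $\delta(F)\ge\tfrac25 n\ge(n_F+2)/3$, Lemma~\ref{lem-weakly-pancyclic} would give every cycle length between $g(F)\in\{3,4\}$ and $c(F)\ge\delta(F)+1\ge 20k+1$; as $4\le 2k+1\le 20k+1$ this forces $C_{2k+1}\subseteq F\subseteq G$, a contradiction, so $F$ is bipartite, say with parts $X\sqcup Y$. Each vertex of one side having $\ge\tfrac25 n$ neighbours on the other forces both $|X|,|Y|\in[\tfrac25 n,\tfrac35 n]$. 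Using this, $F$ is $2$-connected: for any $w$, a component of $F-w$ (or of $F$ itself) avoiding $Y$ would contain an $X$-vertex of degree $\le 1$, while two components both meeting $Y$ would force $|X|\ge 2(\tfrac25 n-1)>\tfrac35 n$, both impossible, so $F$ and every $F-w$ are connected. Finally, conditions (b) and (c) follow by applying the greedy construction of Lemma~\ref{lem-edgeversion-greedpath} to $F$, whose minimum degree exceeds $\tfrac25 n_F$ and whose order $n_F\ge 40k$ is ample; this in fact supplies paths of \emph{every} odd order in $\{3,\dots,2k+1\}$ between equal-part pairs and every even order in $\{4,\dots,2k+2\}$ between opposite-part pairs.

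The heart of the argument is upgrading these properties from $F$ to $G'$, whose minimum degree $11c$ is far too small for any pancyclicity tool. Put $W=V(G')\setminus V(F)$, so $|W|<10c$; since $\delta(G')\ge 11c$, each $w\in W$ has at least $11c-(|W|-1)\ge c+1\ge 3$ neighbours inside $F$. I claim $G'$ is bipartite, and the key is that $F$'s strong path property lets me close any small ``defect'' into a forbidden $C_{2k+1}$. If some $w\in W$ had neighbours $a\in X$ and $b\in Y$, then the length-$2$ path $a\,w\,b$ together with an $F$-path of order $2k$ from $a$ to $b$ (opposite parts) would be a $C_{2k+1}$; hence every $w$ sees only one part of $F$, and I colour $w$ with the opposite colour. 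Likewise, if two equally coloured $w,w'\in W$ were adjacent, picking distinct neighbours $a,b$ of them in the common part and joining the length-$3$ path $a\,w\,w'\,b$ to an $F$-path of order $2k-1$ from $a$ to $b$ (same part) again produces a $C_{2k+1}$. Since the edges inside $F$ and the edges between $F$ and $W$ respect this colouring by construction, every edge of $G'$ is bichromatic and $G'$ is bipartite.

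It remains to make $G'$ satisfy the two structural halves of Definition~\ref{def-dense}. For $2$-connectivity, if $z$ were a cut vertex then $F-z$ (connected, as $F$ is $2$-connected) lies in one component of $G'-z$, and every remaining $w\in W$ still has a neighbour in $F-z$ (it had $\ge c+1\ge 2$ neighbours in $F$), so it lies in the same component, contradicting that $z$ separates $G'$. For the path conditions I route through $F$: step from a $W$-endpoint to one of its $\ge 3$ neighbours in $F$, then prepend/append an $F$-path of the order and parity dictated by Lemma~\ref{lem-edgeversion-greedpath}, the one or two boundary edges shifting the order by a controlled amount; a short parity check confirms that every order in $\{5,\dots,2k+1\}$ (equal parts) and $\{6,\dots,2k+2\}$ (opposite parts) is attained, so $G'$ is $k$-dense as well. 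I expect the main obstacle to be exactly the third paragraph: because $\delta(G')$ is only $11c$, bipartiteness of $G'$ cannot be read off from any cycle-spectrum lemma and must instead be forced indirectly, and the entire scheme depends on having first secured the strong path property of the denser subgraph $F$ together with the counting bound $|W|<10c$, which guarantees that each vertex of $W$ keeps enough neighbours inside $F$ to carry out the routing.
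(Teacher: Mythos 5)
Your proposal is correct and follows essentially the same route as the paper: both constructions of $F$ and $G'$ are the iterative low-degree deletion (your cores coincide with the paper's reduction algorithm, and core monotonicity replaces the paper's ``WLOG $F\subseteq G'$'' ordering argument), the deletion counts are bounded by comparing surviving edges against the odd-cycle Tur\'an bound, bipartiteness of $F$ comes from Lemma \ref{lem-weakly-pancyclic}, the path property comes from the greedy construction of Lemma \ref{lem-edgeversion-greedpath}, and bipartiteness, $2$-connectivity and $k$-density are upgraded from $F$ to $G'$ by routing any defect through $F$-paths to close a forbidden $C_{2k+1}$. The only deviations are cosmetic: you cite F\"uredi--Gunderson and the Dirac-type bound $c(F)\ge\delta(F)+1$ where the paper invokes Lemmas \ref{lem-Bon-Woo} and \ref{Erdos-Gallai}, and your explicit cut-vertex and parity-check arguments fill in steps the paper merely asserts as ``easy to check.''
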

\begin{proof}
    Let $F$ be a subgraph of $G$ defined by a sequence of graphs $H_0,H_1,\ldots,H_s$ such that:
    \begin{algorithm}[H]
\caption{Graph Reduction Algorithm}
\begin{algorithmic}[1]
\STATE Set \( H_0 = G \);
\STATE Set \( i = 0 \);
\WHILE{\( \bm{\delta(H_{i}) \leq 2n/5} \)}
    \STATE Select a vertex $v_i\in V(H_i)$ such that $d_{H_i}(v_i) \le 2n/5$;
    \STATE Set \( H_{i+1} = H_{i} - v_{i} \);
    \STATE Add $1$ to $i$;
\ENDWHILE
\end{algorithmic}
\end{algorithm}
We claim that $s< 10c$. Otherwise, if $s\ge 10c$, then we will  arrive at the graph $H_{10c}$ in the above vertex-deletion process. Note that
\begin{align*}
    e(H_{10c})=e(G)-\sum_{i=0}^{s}d_{H_i}(v_i)
    \geq \frac{(n-c)^2}{4} - 10c \cdot \frac{2n}{5}
    >\frac{(n-10c)^2}{4}.
\end{align*}
Since $n>\max\{50c,50k\}$, we have $|V(H_{10c})|=n-10c>4k$. We know from Lemma \ref{lem-Bon-Woo} that the induced subgraph
$H_{10c}$ contains a copy of $C_{2k+1}$, a contradiction. Therefore, we have $|V(F)|>n-10c$ and $d_F(v)>\frac{2}{5}n$ for each $v\in V(F)$. {By Lemma \ref{Erdos-Gallai},
we see that $F$ contains a cycle with length at least $\frac{2}{5}n +1$.
We claim that $F$ is bipartite.
Otherwise,  $F$ is not bipartite,
then Lemma \ref{lem-weakly-pancyclic}
implies that $F$ contains a cycle $C_t$ for every $4\le t\le \frac{2}{5}n +1$. Since $n\ge 50k$, we find that $F$ contains a copy of $C_{2k+1}$, which leads to a contradiction. So $F$ must be a bipartite graph.}

Let $V(F):= U_1\sqcup U_2$ be a bipartition  of vertices of $F$. Recall that $d_{F}(v)> \frac{2}{5}n$ for each $v\in V(F)$ and the neighbors of $v$ must lie in another vertex part. So we have
 \[ \frac{2}{5}n<|U_1|,|U_2|<\frac{3}{5}n.\]
{We claim that $F$ is $2$-connected.
Next, we show that for any two distinct vertices $u,v\in V(F)$, there are two disjoint paths starting from $u$ to $v$. We divide the argument into two cases.
If $u,v\in U_1$, then
\[ |N_{U_2}(u) \cap N_{U_2}(v)| \ge
|N_{U_2}(u)| + |N_{U_2}(v)| - |U_2| > \frac{n}{5}. \]
There are at least $\frac{n}{5}$ vertices in $U_2$ that are adjacent to both $u$ and $v$. If $u\in U_1$ and $v\in U_2$, then we can choose two distinct vertices $v_1,v_2\in N_{U_2}(u)\setminus \{v\}$. We denote by $U_1'=U_1\setminus \{u\}$.
Note that
\[ |N_{U_1'}(v) \cap N_{U_1'}(v_1)\cap N_{U_1'}(v_2)| > 3 \cdot \frac{2n}{5}  - 2\left(\frac{3n}{5} -1\right) =2.\]
Therefore, we can find two distinct vertices $u_1,u_2\in U_1'$
such that $uv_1u_1v$ and $uv_2u_2v$ are disjoint paths starting from $u$ to $v$.}
By Lemma \ref{lem-edgeversion-greedpath}, we see
that for any two vertices $v,u\in U_i$ with $i\in \{1,2\}$ and any integer $h\in \{3,5,\ldots ,2k+1\}$, we can find a path with order $h$ starting from $u$ to $v$. Moreover, for any $u\in U_1$ and $v\in U_2$, and any $s\in \{4,6,\ldots ,2k\}$, we can find a path of order $s$ starting from $u$ to $v$.
Thus, we conclude that $F$ is a $k$-dense bipartite subgraph of $G$.

Let $G'$ be a subgraph of $G$ defined by a sequence of graphs $G_0, G_1,\ldots,G_t$ such that:

    \begin{algorithm}[H]
\caption{Graph Reduction Algorithm}
\begin{algorithmic}[1]
\STATE Set \( G_0 = G \);
\STATE Set \( i = 0 \);
\WHILE{\( \bm{\delta(G_{i}) < 11c} \)}
    \STATE Select a vertex $u_i\in V(G_i)$ such that $d_{G_i}(u_i) \le 11c$;
    \STATE Set \( G_{i+1} = G_{i} - u_{i} \);
    \STATE Add $1$ to $i$;
\ENDWHILE
\end{algorithmic}
\end{algorithm}

We claim that $t< 2c$. Suppose on the contrary that $t\ge 2c$.  Then we can arrive at the graph $G_{2c}$ in the above reduction algorithm. Thus, we have
\begin{align*}
    e(G_{2c})=e(G)-\sum_{i=0}^{t}d_{G_i}(v_i)
    \geq  \frac{(n-c)^2}{4}
    -2c \cdot 11c
    > \frac{(n-2c)^2}{4} .
\end{align*}
Since $n\geq \max\{50c,50k\}$, we have $|V(G_{2c})|=n-2c>4k$. We know from Lemma \ref{lem-Bon-Woo} that the induced subgraph
$G_{2c}$ contains a copy of $C_{2k+1}$, a contradiction. Therefore, we get $d_{G'}(v)\geq 11c $ for each $v\in V(G')$ and $|G'|\geq n-2c$.
Without loss of generality, we may assume that $F\subseteq G'$. In fact, since $\delta (G_i)< 11c \le 2n/5$, in the process of deleting the vertices,
we can obtain $F$ by first deleting the vertices of $V(G)\setminus V(G')$ and then deleting other vertices with degree at most $2n/5$.

In what follows, we show that $G'$ is bipartite.
Note that $|V(G')\setminus V(F)|< 10c$ and $d_{G'}(v)\geq 11c$ for each $v\in V(G')$. Then $d_{F}(v)\geq c$ for each $v\in {V(G')}$. Recall that $V(F)=U_1\sqcup U_2$.
By Lemma \ref{lem-edgeversion-greedpath}, we obtain that either $d_{U_1}(v) =0$ or $d_{U_2}(v)=0$ for each $v\in V(G')$.
Otherwise, if $v$ has neighbors in both $U_1$ and $U_2$,
then we can find a copy of $C_{2k+1}$, since $F$ is a $k$-dense bipartite graph. This leads to a contradiction.
Thus, one of $d_{U_1}(v)\ge c$ or $d_{U_2}(v)\ge c$ holds for each $v\in V(G')$.  We denote by
\[ V_1 :=\{v\in V(G'):d_{U_2}(v)\geq c\}  \]
and
\[ V_2:=\{v\in V(G'):d_{U_1}(v)\geq c\}. \]
Then $V(G')=V_1\sqcup V_2$ is a partition of vertices of $G'$.

Furthermore, we claim that $e(V_1)=e(V_2)=0$, and so $V(G')=V_1\sqcup V_2$ is a bipartition of $G'$.
Suppose on the contrary that $e(V_1)\geq 1$. Let $vu\in E(V_1)$. By the definition of $V_1$, we have $d_{U_2}(v),d_{U_2}(u)\ge c$.
Let $v_1,v_2\in U_2$ be two distinct vertices satisfy $v_1v,v_2u\in E(G)$.
By Lemma \ref{lem-edgeversion-greedpath}, we can find a path from $v_1$ to $v_2$ with length $2k-2 $ in $F$, then we can find a copy of $C_{2k+1}$ in $G$, a contradiction.
On the other hand, since  each vertex of $V_1$ (resp. $V_2$) has at least $c$ neighbors in $U_2$ (resp. $U_1$), and each vertex of $U_1$ (resp. $U_2$) has at least ${2n \over 5}$ neighbors in $U_2$ (resp. $U_1$),
it is easy to check that $G'$ is 2-connected.
Since  each vertex of $V_1$ (resp. $V_2$) has at least $c$ neighbors in $U_2$ (resp. $U_1$) and \( F \subseteq G' \), by Lemma \ref{lem-edgeversion-greedpath}, we conclude that \( G' \) is a $k$-dense bipartite subgraph of \( G \).
\end{proof}

\subsection{Structural properties outside the bipartition}

\begin{defn} \label{def-bad}
Let $G'$ be a $2$-connected bipartite subgraph of $G$ and
\( P_{uv} \) be a path of \( G \) with endpoints $u,v$.
We call $P_{uv}$ a bad path for $G'$, and $(u,v)$ a bad pair for $G'$, if the endpoints $u,v$ are in \( V(G') \) and all other vertices of \( P_{uv} \) are in \( V(G)\setminus V(G') \), and at least one of the following holds.
\begin{enumerate}
     \item[(a)]  $v, u$ are in the same part of $G'$, and $P_{uv}$ is a path of odd length from $v$ to $u$;

    \item[(b)]  $v, u$ are in different parts of $G'$, and $P_{uv}$ is a path of even length from $v$ to $u$.
\end{enumerate}
\end{defn}

The following structural lemma demonstrates that after obtaining the $k$-dense bipartite subgraph $G'$ via Lemma \ref{lem-k-dense-bi-sub}, the remaining vertices (or subgraphs) outside $G'$ are structurally suspended on $G'$ or a larger bipartite block than $G'$.
Otherwise, if the block containing $G'$ is not bipartite, then there exists a ``bad'' path whose endpoints lie in $G'$, while all intermediate vertices are outside $G'$.

\begin{lemma}\label{lem-bad-for-H}
    Let $G'$ be a $2$-connected bipartite subgraph of $G$. Let $B$ be a $2$-connected subgraph of $G$ that contains $G'$ as a subgraph. If $B$ is not bipartite, then $B$ contains a bad path $P_{uv}$ for $G'$.
\end{lemma}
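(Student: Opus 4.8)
The plan is to start from the non-bipartiteness of $B$, which guarantees an odd cycle $O\subseteq B$, and to convert $O$ into a bad path for $G'$ in the sense of Definition \ref{def-bad}. Write $V(G')=W_1\sqcup W_2$ for the bipartition and assign the parity label $c(w)=0$ for $w\in W_1$ and $c(w)=1$ for $w\in W_2$. The organising observation is that a path $P$ with both ends $u,v\in V(G')$ and all interior vertices in $V(G)\setminus V(G')$ is bad exactly when its length is \emph{inconsistent}, i.e. $|P|\not\equiv c(u)+c(v)\pmod 2$; so it suffices to locate one path of inconsistent parity. I would split the argument according to how $O$ meets $V(G')$.

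First I would treat the easy case in which $O$ passes through at least two vertices of $G'$. Listing the vertices of $V(O)\cap V(G')$ in cyclic order as $r_1,\dots,r_m$ with $m\ge 2$, the cycle $O$ decomposes into arcs $A_1,\dots,A_m$, where $A_i$ runs from $r_i$ to $r_{i+1}$ (indices mod $m$), has distinct ends, and has all interior vertices outside $V(G')$; each $A_i$ is thus a candidate bad path. If every $A_i$ were consistent, then summing over $i$ and using that each label appears exactly twice gives
\[ |O|=\sum_{i=1}^m |A_i|\equiv \sum_{i=1}^m\bigl(c(r_i)+c(r_{i+1})\bigr)=2\sum_{i=1}^m c(r_i)\equiv 0\pmod 2, \]
contradicting that $|O|$ is odd. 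Hence some arc is inconsistent, and that arc is a bad path for $G'$.

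The harder case is when $O$ meets $V(G')$ in at most one vertex, where the telescoping count is unavailable and I must use the $2$-connectivity of $B$ to route $O$ back to $G'$. If $V(O)\cap V(G')=\emptyset$, then, as $V(O)$ and $V(G')$ are disjoint sets of size at least $3$, Menger's theorem together with $2$-connectivity yields two vertex-disjoint paths $P_1\colon x_1\to u_1$ and $P_2\colon x_2\to u_2$ from $V(O)$ to $V(G')$, with distinct ends $x_1,x_2\in V(O)$ and $u_1\ne u_2\in V(G')$, each internally avoiding $V(O)\cup V(G')$. The vertices $x_1,x_2$ split $O$ into arcs $O_1,O_2$ with $|O_1|+|O_2|=|O|$ odd, and for $j\in\{1,2\}$ the concatenation $P_1^{-1}O_jP_2$ is a $u_1$--$u_2$ path with interior outside $V(G')$; their lengths sum to $|O|+2(|P_1|+|P_2|)$, an odd number. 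The single-vertex case $V(O)\cap V(G')=\{r\}$ is handled in the same spirit: working in the connected graph $B-r$, I take a shortest path $R$ from $V(O-r)$ to $V(G')\setminus\{r\}$, and combine it with the two edges of $O$ incident to $r$ to produce two $r$--$u$ paths with interior outside $V(G')$ whose lengths again sum to an odd number. In both situations the two paths share their endpoints but have opposite parities, so exactly one of them violates $|P|\equiv c(u)+c(v)\pmod 2$ and is a bad path for $G'$.

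The main obstacle is precisely this last case distinction: the clean parity-telescoping argument applies only once the odd cycle already visits $G'$ at least twice, and an odd cycle that avoids $G'$ (or meets it only once) forces me to invoke Menger's theorem and the $2$-connectivity of $B$ to manufacture two internally-$G'$-free paths of opposite parity between a pair of $G'$-vertices. Care is also required to confirm that the constructed objects are genuine paths (no repeated vertices) and that their interiors truly lie in $V(G)\setminus V(G')$; both follow from choosing the connecting paths to be internally disjoint from $V(O)\cup V(G')$, but they must be verified explicitly.
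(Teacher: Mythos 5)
Your proof is correct, but it takes a genuinely different route from the paper. The paper proceeds by induction on $|V(B)\setminus V(G')|$: it picks a shortest path $L$ with both ends in $G'$ and interior outside, and splits into the case where $L$ exhausts all outside vertices (minimality then forces the interior vertices to have degree $2$, so non-bipartiteness makes $L$ itself bad) and the case where the induced subgraph $B'$ on $V(G')\cup V(L)$ is bipartite, in which the induction hypothesis is applied with $B'$ in the role of the bipartite subgraph and the resulting bad path for $B'$ is re-routed to a bad path for $G'$ through two disjoint paths inside $B'$, with explicit parity bookkeeping. You instead argue directly from an odd cycle $O\subseteq B$: when $O$ meets $G'$ at least twice, your telescoping identity $\sum_i|A_i|=|O|\equiv 1\pmod 2$ versus $\sum_i\bigl(c(r_i)+c(r_{i+1})\bigr)\equiv 0$ immediately yields a bad arc; when $O$ meets $G'$ at most once, Menger's theorem (via $2$-connectivity of $B$, or connectivity of $B-r$ in the one-vertex case) produces two paths between the same pair of $G'$-vertices, internally avoiding $V(G')$, whose lengths sum to an odd number, so exactly one has the wrong parity. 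Both arguments use $2$-connectivity only to route the odd structure back to $G'$; what yours buys is the elimination of the induction entirely, together with its attendant subtleties (verifying $2$-connectivity of $B'$, applying the hypothesis to the pair $(B',B)$, and the disjointness of the re-routing paths), at the modest cost of invoking the set version of Menger's theorem as a black box, and the disjointness/interior-avoidance checks you flag at the end are indeed exactly the points that need (and receive) verification.
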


\begin{proof}
We proceed with the proof by induction on \(|V(B) \setminus V(G')|\). For convenience, we denote $p:=|V(B) \setminus V(G')|$.
When \(p = 1\), that is, \(B\) has exactly one vertex outside of \(G'\), say $v_b$, the conclusion holds immediately, since
$B$ is not bipartite and
$v_b$ must have at least one neighbor in each partite set of  $G'$. So part (b) in Definition \ref{def-bad} is satisfied.
Suppose that the statement holds for $\le p-1$. Now, we consider the case $p\ge 2$.
{Since $B$ is 2-connected, there exists a path in $B$ whose endpoints lie in $V(G')$ and all other vertices belong to $V(B) \setminus V(G') $. Let $ L = w_1 \cdots w_a $ be such a path.  We may further assume that $|L|$ is minimized over all such possible paths. Since $p=|V(B)\setminus V(G')|$, we see that \(3 \leq |L| \leq p + 2\).
Next, we divide the argument into two cases.
}

{\bf {Case 1. \(|L| = p + 2\).}}

Then \(V(B) = V(G') \cup V(L)\). By the minimality of \(|L|\),  we have \(N(w_i) = \{w_{i-1}, w_{i+1}\}\) for every \(2 \leq i \leq a- 1\). Since \(B\) is non-bipartite, \(L\) must be a bad path for \(G'\).

{\bf {Case 2. \(|L| \leq p + 1\).}}

Let \(B'\) be the subgraph of $G$ induced by the vertices of $G' \cup L$. Clearly, \(B'\) is $2$-connected and \(|V(B') \setminus V(G')| \le p -1\). If \(B'\) is not bipartite, then the induction hypothesis ensures that \(B'\) contains a bad path for \(G'\), as needed. Thus, we may assume that \(B'\) is bipartite.

Note that \(|V(B)\setminus V(B')| \le p-1\). By the induction hypothesis, we know that $B$ contains a bad path \(P = z_1z_2\cdots z_c\) for \(B'\), where \(z_1, z_c \in V(B')\) and \(z_2, z_3, \ldots, z_{c-1} \in V(B)\setminus V(B')\).  Since $B'$ is $2$-connected, for any vertex $v\in \{z_1,z_c\}$, there are two internally disjoint paths in $B'$ joining $v$ to vertices of $G'$. Let  $z_1X_1y_1$ and $z_cX_cy_c$ be two disjoint paths joining $z_1$ to $y_1\in V(G')$ and $z_c$ to $y_c\in V(G')$, respectively, where $X_1,X_c\subseteq V(B')\setminus V(G')$. (If $z_1\in V(G')$, then we set $X_1=\emptyset$ and $y_1=z_1$; if $z_c\in V(G')$, then we set $X_c=\emptyset$ and $y_c=z_c$).

Since $B'$ is bipartite and $P$ is bad for $B'$, it is easy to verify that $y_1X_1PX_cy_c$ is a bad path for $G'$.
For example, we consider the case where $y_1,y_c$ are in different color classes of $G'$.
If $z_1,z_c$ are in the same color class of $B'$, then $P$ has odd length because $P$ is bad for $B'$.
Moreover, the sum of lengths of $y_1X_1z_1$ and $y_cX_cz_c$ is odd since $B'$ is bipartite. Thus, the path $y_1X_1PX_cy_c$ has even length.
If $z_1,z_c$ are in different color classes of $B'$, then $P$ has even length, and the sum of lengths of $y_1X_1z_1$ and $y_cX_cz_c$ is even. So $y_1X_1PX_cy_c$ is a path with even length. Similarly, in the case where $y_1,y_c$ are in the same color class of $G'$, we can check that $y_1X_1PX_cy_c$ has odd length. This completes the proof.
\end{proof}

\section{Proof of Theorem \ref{thm-chromatic-edges-version}}

\label{sec-5}

In this section, we present the  proof of Theorem \ref{thm-chromatic-edges-version}.

\begin{proof}[{\bf Proof of Theorem \ref{thm-chromatic-edges-version}}]
    Let $G$ be a $C_{2k+1}$-free graph on $n$ vertices and $G\notin \mathcal{G}_{n,r}$. We need to prove that $e(G)\le \left\lfloor\frac{(n-r+1)^2}{4}\right\rfloor+{r\choose 2}$,  and  equality holds if and only if $G$ is obtained from $T_{n-r+1,2}$ and $K_r$ by sharing exactly one vertex. Without loss of generality, we may assume that $G$ achieves the maximum number of edges. Our goal is to show that $G$ is obtained from $T_{n-r+1,2}$ and $K_r$ by sharing exactly one vertex. By the maximality of $G$, we obtain that
\begin{equation*}
    e(G)\geq \left\lfloor\frac{(n-r+1)^2}{4}\right\rfloor+{r\choose 2}> \frac{(n-2k)^2}{4}.
\end{equation*}
Setting $c=2k$ in Lemma \ref{lem-k-dense-bi-sub}, there exist subgraphs $F\subseteq G'\subseteq  G$ with $|V(F)|\geq n-20k$,
$\delta(F)\geq \frac{2}{5}n$ and $|V(G'|\geq n-4k$,  $\delta(G')\geq 22k$. Moreover, both $F$ and $G'$ are $k$-dense bipartite subgraph of $G$.
Let $V(F)= U_1\sqcup U_2$ be a partition  of vertices of $F$, and $V(G')= V_1\sqcup V_2$ be a partition  of vertices of $G'$.

Let $B$ be the block of $G$ that contains $G'$ as a subgraph. We will prove that $B$ is bipartite. Suppose on the contrary that $B$ is not bipartite.
By Lemma \ref{lem-bad-for-H}, there exists a bad path $P$ for $G'$. We further assume that $|V(P)|$ is minimal among all such paths. We denote by $V(P)\cap V(G')=\{u,v\}$, $P'=P\setminus\{u,v\}$.  Recall that $G'$ is the $k$-dense bipartite subgraph of $G$, and $P$ is a bad path for $G'$.

\begin{claim} \label{clm-4-1-part}
We have $V(G)=V(G')\sqcup V(P')$.
\end{claim}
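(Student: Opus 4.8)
The plan is to argue by contradiction using the edge-maximality of $G$. Suppose the claim fails and set $W := V(G)\setminus(V(G')\cup V(P'))$, so that $W\neq\emptyset$. The goal is to modify $G$ into an $n$-vertex graph $\widehat G$ that is still $C_{2k+1}$-free, still lies outside $\mathcal{G}_{n,r}$, yet has strictly more edges, contradicting the choice of $G$.

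The observation that drives the whole argument is that every vertex of $W$ has small degree. Indeed, since $w\notin V(G')$, Lemma \ref{lem-k-dense-bi-sub}(b) gives $d_{V(G')}(w)<22k$; together with $|V(G)\setminus V(G')|\le 4k$ (from $|V(G')|\ge n-4k$) this yields $d_G(w)<26k$, which is tiny compared with $n$. I would then define $\widehat G$ by keeping the induced subgraph $G[V(G')\cup V(P')]$ unchanged and, for each $w\in W$, deleting all edges incident to $w$ and joining $w$ to every vertex of $U_2$ (one fixed part of the dense bipartite subgraph $F$), leaving $W$ independent. Since $|U_2|>\frac{2}{5}n\ge 40k$ while each $w$ loses fewer than $26k$ edges, a direct count gives $e(\widehat G)-e(G)\ge |W|\bigl(|U_2|-26k\bigr)>0$, using $n\ge 100k$.

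It then remains to verify the two structural properties of $\widehat G$. For $C_{2k+1}$-freeness I would reinsert the vertices of $W$ one at a time and keep freeness as an invariant: any newly created copy of $C_{2k+1}$ must pass through the freshly attached vertex $w$, entering and leaving through two vertices $a,b\in U_2$, so that deleting $w$ leaves an odd path $Q$ of length $2k-1$ from $a$ to $b$ inside the previous (still $C_{2k+1}$-free) graph. Because $a,b\in U_2$ have at least $\frac{n}{5}>2k\ge|V(Q)|$ common neighbours in $U_1$ inside $F$, one of them, say $x$, avoids $Q$, and then $a\,x\,b$ together with $Q$ is a copy of $C_{2k+1}$ already present before $w$ was added, a contradiction. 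Finally, $G'\cup P$ is $2$-connected (it is obtained by adding the ear $P$ to the $2$-connected $G'$) and non-bipartite, and each $w\in W$ is joined to at least two vertices of $U_2$ inside this block, so $\widehat G$ is $2$-connected and non-bipartite; a $2$-connected non-bipartite graph on $n\ge 100k$ vertices cannot be written as a bipartite graph with suspended pieces of total excess at most $r-2<n$, hence $\widehat G\notin\mathcal{G}_{n,r}$. This contradicts the maximality of $e(G)$, so $W=\emptyset$ and $V(G)=V(G')\sqcup V(P')$.

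The main obstacle I anticipate is the bookkeeping in the relocation step: the target part must be taken inside the robust core $F$ (rather than the possibly larger but sparser parts $V_1,V_2$ of $G'$) precisely so that the endpoints $a,b$ of a hypothetical new odd path inherit the large common neighbourhood (of size at least $\frac{n}{5}$) needed to close it into a forbidden $C_{2k+1}$, while simultaneously $|U_2|>\frac{2}{5}n$ keeps the move a strict edge gain; reconciling these two requirements is the delicate point. Checking that the move cannot accidentally place $\widehat G$ inside $\mathcal{G}_{n,r}$ is the other subtlety, which I would settle through the $2$-connectivity and non-bipartiteness of $\widehat G$ as above.
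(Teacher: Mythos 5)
Your proof is correct and takes essentially the same approach as the paper: an edge-increasing relocation of the stray vertices onto the part $U_2$ of the dense core $F$, with $C_{2k+1}$-freeness preserved by replacing the relocated vertex by a common $U_1$-neighbour of its two cycle-neighbours, and exclusion from $\mathcal{G}_{n,r}$ guaranteed by the large $2$-connected non-bipartite structure on $V(G')\cup V(P')$, contradicting edge-maximality. The only differences are cosmetic: the paper moves a single vertex $w$ (attaching it to $N_{U_2}(z)$ for a fixed $z\in U_1$) and invokes the bound $r-1$ on the order of non-bipartite blocks of graphs in $\mathcal{G}_{n,r}$, whereas you move all of $W$ at once onto the whole of $U_2$ and argue via $2$-connectivity and non-bipartiteness of the entire modified graph.
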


\begin{proof}[Proof of Claim \ref{clm-4-1-part}]
Otherwise, if \( V(G) \setminus (V(G')\cup V(P')) \neq \emptyset \), then we can choose a vertex \( w \in V(G)\setminus  (V(G') \cup V(P')) \) and fix a vertex \( z \in U_1 \).  We define
\[ G^* := G - \{wu : u \in N(w)\} + \{wu :  u \in N_{U_2}(z)\} . \]
Since $d_{U_2}(z)\geq \frac{2n}{5}\geq 26k \ge d(w)$, we have  $e(G^*)>e(G)$.   First, $G^*$ does not contain $C_{2k+1}$. Suppose for contradiction that $G^*$ contains a copy of $C_{2k+1}$, denoted by $C$. Clearly $w \in V(C)$. Let $w', w''$ be the two vertices adjacent to $w$ in $C$, where $w', w'' \in U_2$. Since $d_{U_1}(w'), d_{U_1}(w'') \geq 2n/5$, we have $|N_{U_1}(w') \cap N_{U_1}(w'')| \geq n/5$. Therefore, we can find a vertex $w''' \in U_1$ outside $V(C)$ that is adjacent to both $w'$ and $w''$. Then $(V(C) \setminus \{w\}) \cup \{w'''\}$ would form a $C_{2k+1}$ in $G$, leading to a contradiction. Second, $G^* \notin \mathcal{G}_{n,r}$. Indeed, for any graph in $\mathcal{G}_{n,r}$, the number of vertices of its non-bipartite blocks cannot exceed $r-1$. However, in $G^*$, the vertex set $V(G') \cup V(P')$ forms a 2-connected non-bipartite subgraph of $G^*$ with $|V(G') \cup V(P')| > n - 4k > r$, which implies $G^* \notin \mathcal{G}_{n,r}$. This contradicts the maximality of \( G \). Therefore, we get \( V(G) =V (G') \sqcup V(P') \).
\end{proof}

We denote $|V(P)|=h$ and divide the remaining proof into five cases.

 {\bf{Case 1.}}  $h\leq 2k-2$.

Since $G'$ is $k$-dense bipartite subgraph,  no matter whether $v$ and $u$ are in the same part, there is a path from $v$ to $u$ with order $2k+3-h$ in $G'$.
Combining with the path $P$, we can see a copy of $C_{2k+1}$ in $G$, which leads to a contradiction.

 {\bf{Case 2.}} $h=2k-1$.
 We may assume that $v\in V_1$ and $u\in V_2$. Then $e(N_{V_2}(v),N_{V_1}(u))=0$. Since $d_{V_1}(u),d_{V_2}(v)\geq 22k$, we have
$$
e(G-P')\leq \left\lfloor\frac{(n-2k+1)^2}{4}\right\rfloor-d_{V_1}(u)\cdot d_{V_2}(v)+d_{V_1}(u)+d_{V_2}(v),
$$
and
 \[ \sum_{w\in P'}d(w) \le 26k(2k-3)  .\]
Then
 \begin{align*}
     e(G)&=e(G-P')+e(G-P',P')+e(P')\\
     &\leq \left\lfloor\frac{(n-2k+1)^2}{4}\right\rfloor-d_{V_1}(u)\cdot d_{V_2}(v)+d_{V_1}(u)+d_{V_2}(v)+26k(2k-3)\\
     &\leq \left\lfloor\frac{(n-2k+1)^2}{4}\right\rfloor-(1-22k)^2+26k(2k-3)+1\\
     &<\left\lfloor\frac{(n-2k+1)^2}{4}\right\rfloor,
 \end{align*}
a contradiction.

{\bf{Case 3.}} $h=2k$.

 We may assume that $P=v_1v_2\cdots v_{2k-1}v_{2k}$, where $v_1=v$, $v_{2k}=u$ and $v,u\in V_1$. By the minimality of $|V(P)|$, we get $N_{G'}(v_i)\subseteq\{v_1,v_{2k}\}$ and $d_{G'}(v_i)\leq 1$ for each $i\in[3,2k-2]$. Moreover, we have $v_iv_j\notin E(G)$ if $|i-j|\geq 3$ and $|i-j|$ is odd for $i,j\in[2k]$. Then $d_{P'}(v_j)\leq k$ for each $j\in [3,2k-2]$, and
 $d_{P'}(v_2),d_{P'}(v_{2k-1})\leq k-1$ and $N_{G'}(v_2),N_{G'}(v_{2k-1})\subseteq V_1$. We denote by
 \[ N_{V_1}(v_2)\cup N_{V_1}(v_{2k-1})\setminus\{v_1,v_{2k}\}
 :=\{w_1,w_2,\ldots,w_s\}.\]
For any $w\in N_{V_1}(v_2)$ and $w'\in N_{V_1}(v_{2k-1})$, we have $N_{V_2}(w)\cap N_{V_2}(w')=\emptyset$ (otherwise, there will be a $C_{2k+1}$, a contradiction). Then $d_{V_2}(v_1)+d_{V_2}(v_{2k})\leq |V_2|$. Without loss of generality, we may assume $d_{V_2}(v_1)\leq d_{V_2}(v_{2k})$. Therefore, we get $d_{V_2}(w_i)+d_{V_2}(v_{1})\leq |V_2|$ for each $1\leq i\le s$. Then
\begin{align*}
    e(G-P')&\leq |V_2|\cdot (|V_1|-s-2)+d_{V_2}(v_1)+d_{V_2}(v_{2k})+\sum_{i=1}^sd_{V_2}(w_i)\\
    &=|V_2|\cdot (|V_1|-s-2)+d_{V_2}(v_1)+d_{V_2}(v_{2k})-sd_{V_2}(v_1)+\sum_{i=1}^s(d_{V_2}(w_i)+d_{V_2}(v_1))\\
    &\leq |V_2|\cdot (|V_1|-1)-22sk\\
    &\leq  \left\lfloor\frac{(n-2k+1)^2}{4}\right\rfloor-22sk.
\end{align*}
Therefore, we have
 \begin{align*}
     e(G)&=e(G-P')+e(G-P',P')+e(P')\\
     &\leq \left\lfloor\frac{(n-2k+1)^2}{4}\right\rfloor-22sk+ 2k-4+2(s+2)+k(k-1)-1\\
     &= \left\lfloor\frac{(n-2k+1)^2}{4}\right\rfloor-22sk+k^2+k+2s-1\\
     &\leq \left\lfloor\frac{(n-2k+1)^2}{4}\right\rfloor+k^2+k-1\\
     &< \left\lfloor\frac{(n-2k+1)^2}{4}\right\rfloor+{2k\choose 2},
 \end{align*}
which leads to a contradiction.

 {\bf{Case 4.}} $h=2k+1$.

We may assume that $P=v_1v_2\cdots v_{2k+1}$, where $v_1=v$, $v_{2k+1}=u$ and $v\in V_1$, $u\in V_2$. By the minimality of $|V(P)|$, we have that $d_{G'}(v_i)\leq 1$ for each odd $i\in[3,2k-1]$, and $d_{G'}(v_j)=0$ for each even $j\in[4,2k-2]$, and $v_iv_j\notin E(G)$ if $|i-j|\geq 3$ and $|i-j|$ is odd for every $i,j\in[2k+1]$.
Recall that $P'=P\setminus \{v,u\}$.
If $k=2$, then $d_{P'}(v_i)\leq k$ for each $i\in [2,2k]$. If $k\geq 3$, then $d_{P'}(v_i)\leq k+1$ for each $i\in[2,2k]$. Therefore, we get
\[ e(P')\leq \frac{2k^2+3k-7}{2}.\]
This bound holds because $k(2k-1)<2k^2+3k-7$ for $k=2$, and $(k+1)(2k-1)\leq 2k^2+3k-7$ for $k\geq 3$, and $k^2+\frac{5k-7}{2}< {2k\choose 2}$ for $k\geq 2$. We claim that $e(N_{V_1}(v_2),N_{V_2}(v_{2k}))=0$. Otherwise, we can find a copy of $C_{2k+1}$ in $G$, which is a contradiction. Then
$$
e(G-P')\leq \left\lfloor\frac{(n-2k+1)^2}{4}\right\rfloor-d_{V_1}(v_2)\cdot d_{V_2}(v_{2k}).
$$
Therefore, we have
 \begin{align*}
     e(G)&=e(G-P')+e(G-P',P')+e(P')\\
     &\leq \left\lfloor\frac{(n-2k+1)^2}{4}\right\rfloor-d_{V_1}(v_2)\cdot d_{V_2}(v_{2k})+d_{V_1}(v_2)+ d_{V_2}(v_{2k})+k-1+e(P')\\
     &\leq  \left\lfloor\frac{(n-2k+1)^2}{4}\right\rfloor-(1-d_{V_1}(v_2))(1-d_{V_2}(v_{2k}))+k+\frac{2k^2+3k-7}{2}\\
     &\leq \left\lfloor\frac{(n-2k+1)^2}{4}\right\rfloor+k^2+\frac{5k-7}{2}\\
     &<\left\lfloor\frac{(n-2k+1)^2}{4}\right\rfloor+{2k\choose 2},
 \end{align*}
which is a contradiction.

{\bf{Case 5.} $2k+2\leq h\leq 4k+2$}.

We denote by $s:=h-2k$. Then $2\leq s\leq 2k+2$. Let  $P=v_1v_2\cdots v_{2k+s-1}v_{2k+s}$, where $v_1=v$ and  $v_{2k+s}=u$. We now consider the case where $s$ is even and $v,u\in V_1$. The case for odd $s$ follows analogously. By the minimality of $|V(P)|$, we have $d_{G'}(v_i)\leq 1$ for each $i\in[3,2k+s-2]$, and $v_iv_j\notin E(G)$ if $|i-j|\geq 3$ and $|i-j|$ is odd for $i,j\in[2k+s]$. Then $d_{P'}(v_j)\leq k+s/2$ for each $j\in [3, 2k+s-2]$,
and $d_{P'}(v_2),d_{P'}(v_{2k+s-1})\leq k+s/2-1$. Consequently, we have
\begin{align*}
    e(G-P')\leq \left\lfloor\frac{(n-2k-s+2)^2}{4}\right\rfloor,
\end{align*}
and
$$
e(P')\leq \frac{(2k+s-2)(k+s/2)}{2}=
\left(k+\frac{s}{2}-1 \right) \left(k+\frac{s}{2} \right).
$$
Therefore, we have
\begin{align*}
    e(G)&=e(G-P')+e(G-P',P')+e(P')\\
    &\leq \left\lfloor\frac{(n-2k-s+2)^2}{4}\right\rfloor+d_{V_1}(v_2)+d_{V_1}(v_{2k+s-1})+2k+s-4+\left(k+\frac{s}{2}-1 \right) \left(k+\frac{s}{2} \right)\\
    &\leq \left\lfloor\frac{(n-2k-s+2)^2}{4}\right\rfloor+44k-4+(k+\frac{s}{2}+1)(k+\frac{s}{2})\\
    &<\left\lfloor\frac{(n-2k+1)^2}{4}\right\rfloor+{2k\choose 2}-\frac{(s-1)(n-2k+1)}{2}-k^2+47k+ {(s-1)(s+k-1)\over 2}\\
    &< \left\lfloor\frac{(n-2k+1)^2}{4}\right\rfloor +{2k\choose 2}-\frac{(s-1)(n-5k)}{2}+45k\\
    &\leq \left\lfloor\frac{(n-2k+1)^2}{4}\right\rfloor+{2k\choose 2}-\frac{n-5k}{2}+45k\\
    &< \left\lfloor\frac{(n-2k+1)^2}{4}\right\rfloor+{2k\choose 2},
\end{align*}
where the last inequality holds since $n\geq 95k$. This leads to a contradiction.

In conclusion, we have proved that $B$ is bipartite. Then $G$ is obtained from a bipartite graph $B$ by suspending some small graphs.
We denote \( t:=|V(G)\setminus V(B)|\). By Lemma \ref{lem-k-dense-bi-sub}, we have $t \leq 4k$ and $e(G)\le \lfloor \frac{(n-t)^2}{4} \rfloor + {t+1 \choose 2}$, which together with \( e(G) \ge \lfloor \frac{(n-r+1)^2}{4} \rfloor +\binom{r}{2}\) yield $t\le r-1$. If $t\le r-2$, then $G\in \mathcal{G}_{n,r}$,  a contradiction. Therefore, we have $t=r-1$ and $e(G) = \lfloor \frac{(n-r+1)^2}{4} \rfloor +\binom{r}{2}$.
So $G$ is obtained from the Tur\'{a}n graph
$T_{n-r+1,2}$ by suspending a clique $K_r$.
\end{proof}

\section{Proof of Theorem \ref{thm-chromatic-spectral-version}}
\label{sec-6}

In this section, we are ready to prove Theorem \ref{thm-chromatic-spectral-version}.

\begin{proof}[{\bf Proof of Theorem \ref{thm-chromatic-spectral-version}}]
Let $3\leq r \leq 2k, k\geq 2$ and $n\geq 712k$.
{Let \( G \) be an \( n \)-vertex graph that contains no copy of \( C_{2k+1} \) and $G\notin \mathcal{G}_{n,r}$.
We need to prove that $\lambda (G)\le \lambda (T_{n-r+1, 2}\circ K_r)$,  and the equality holds if and only if
$G=T_{n-r+1,2}\circ K_r$. Without loss of generality, we may assume that $G$ achieves the maximum spectral radius.
Our goal is to show that $G=T_{n-r+1,2}\circ K_r$.
}
 Since $\lambda (G)$ is maximum, we get
\begin{equation} \label{eq-spectral-ge} \lambda(G)\geq \lambda(T_{n-r+1,2}\circ K_r) >
\lambda (T_{n-r+1,2}) =
\sqrt{\left\lfloor \frac{(n-r+1)^2}{4} \right\rfloor}.
\end{equation}
Since $3\le r\le 2k$, we get $\lambda (G)> \lfloor \frac{n-r+1}{2}\rfloor \ge \frac{n}{2} -k$.
Since $G$ is a $C_{2k+1}$-free graph with $m$ edges,
setting $\ell = k- \frac{1}{2}$ in Lemma \ref{lem-ZLS}, we have
$$\frac{n}{2}-k < \lambda(G)\leq \frac{k- 1+\sqrt{4m+(k- 1)^2}}{2}.$$
It follows that
\begin{equation}
    e(G)>  \frac{n^2}{4}-\frac{1}{2}(3k-1)n + k(2k-1)
    \geq \frac{n^2}{4}-2kn+2k^2>\frac{(n-5k)^2}{4}.
\end{equation}
Setting $c=5k$ in Lemma \ref{lem-k-dense-bi-sub},
there exist subgraphs $F\subseteq G'\subseteq  G$ with $|V(F)|\geq n-50k$, $|V(G')|\geq n-10k$, $\delta(F)\geq \frac{2}{5}n$ and $\delta(G')\geq 55k$. Moreover, both $F$ and $G'$ are $k$-dense bipartite subgraph of $G$.
Let $V(F)= U_1\sqcup U_2$ be a partition of $F$, and $V(G')= V_1\sqcup V_2$ be a partition of $G'$.

\begin{claim}\label{Clm-vector-size}
Let $\bm{x} = ( x_1, \ldots , x_n)^{T}$ be the Perron eigenvector of $G$ corresponding to $\lambda (G)$.  \begin{itemize}
\item[\rm (a)]
    For every $u\in V(G)\setminus V(G')$, we have $x_u< \frac{13}{71}$.

\item[\rm (b)]
 For every $v\in V(F)$, we have $x_v\geq\frac{26}{71}$.
    \end{itemize}
\end{claim}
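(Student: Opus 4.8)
The plan is to read off both inequalities directly from the eigenvalue equation $\lambda x_i = \sum_{j\in N(i)}x_j$, feeding in the degree information from Lemma \ref{lem-k-dense-bi-sub} and the normalisation $\max_v x_v = 1$. \textbf{Part (a)} is short: for $u\in V(G)\setminus V(G')$, Lemma \ref{lem-k-dense-bi-sub}(b) gives $d_{V(G')}(u)<55k$, and since $|V(G)\setminus V(G')|\le 10k$ there are fewer than $10k$ remaining neighbours, so $d(u)<65k$. Hence $\lambda x_u=\sum_{w\sim u}x_w\le d(u)<65k$, and using $\lambda>\tfrac n2-k$ this yields $x_u<\frac{65k}{n/2-k}=\frac{130k}{n-2k}\le\frac{130k}{710k}=\frac{13}{71}$, where the last inequality is exactly where $n\ge 712k$ enters.

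\textbf{Part (b)} is the substantive half. First I would locate the vertex $z$ with $x_z=1$: a vertex outside $V(F)$ has degree at most $\tfrac25 n+50k<\lambda$ (using the last sentence of Lemma \ref{lem-k-dense-bi-sub}(a) and $n\ge 712k$), contradicting $\lambda=\lambda x_z\le d(z)$, so $z\in V(F)$; say $z\in U_1$. Since $F$ is an induced bipartite subgraph, all neighbours of $z$ lie in $U_2\cup(V(G)\setminus V(F))$, so $\lambda\le\sigma_2+50k$ where $\sigma_2:=\sum_{w\in U_2}x_w$. For $v$ in the same part $U_1$, the neighbourhood $N_F(v)\subseteq U_2$ omits at most $|U_2|-\tfrac25 n<\tfrac n5$ vertices of $U_2$, so $\lambda x_v\ge\sigma_2-\tfrac n5\ge\lambda-50k-\tfrac n5$, giving
$$x_v\ge\alpha:=1-\frac{n/5+50k}{\lambda}\ge\frac{26}{71}$$
for every $\lambda\ge\tfrac n2-k$ (numerically $\alpha\ge 0.45$ at the threshold), which settles the vertices of $U_1$.

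For the opposite part $U_2$ I would bootstrap using the bound just obtained: every $w\in U_1$ now satisfies $x_w\ge\alpha$, so for $v\in U_2$ we get $\lambda x_v\ge\sum_{w\in N_F(v)}x_w\ge\alpha\,|N_F(v)|\ge\tfrac{2\alpha n}{5}$, i.e. $x_v\ge\tfrac{2\alpha n}{5\lambda}$. To convert this into the constant $\tfrac{26}{71}$ I need an upper bound on $\lambda$, which I would get by comparing $G$ with its bipartite induced subgraph $F$: since $F\subseteq K_{|U_1|,|U_2|}$ we have $\lambda^2(F)\le|U_1||U_2|\le n^2/4$, and deleting the at most $50k$ vertices of $V(G)\setminus V(F)$ one at a time while applying the Sun--Das inequality (Lemma \ref{lem-Sun-Das-deletion}) at each step yields $\lambda^2(G)\le\lambda^2(F)+2\sum_i d(v_i)\le\tfrac{n^2}{4}+47kn$, hence $\lambda\le\tfrac n2+47k$.

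\textbf{The main obstacle} is precisely this $U_2$ estimate, because the two ingredients pull in opposite directions: the same-part constant $\alpha$ shrinks as $\lambda$ decreases, whereas $\tfrac{2\alpha n}{5\lambda}$ shrinks as $\lambda$ grows, so fixing $\lambda$ at either extreme loses the bound. The resolution I would use is to keep $\lambda$ as a free parameter and treat
$$x_v\ge f(\lambda):=\frac{2n}{5\lambda}\Bigl(1-\frac{n/5+50k}{\lambda}\Bigr)$$
as a function of $\lambda$ over the interval $\tfrac n2-k<\lambda\le\tfrac n2+47k$ certified by the previous two steps; $f$ is unimodal there, so it suffices to check the two endpoints. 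A direct computation shows $\min f\ge\tfrac{26}{71}$ exactly when $n\ge 712k$, which is why both the threshold $712k$ and the constant $\tfrac{26}{71}$ are what they are. I expect the genuinely delicate point to be the bookkeeping of the $O(k)$ error terms (the $50k$ coming from $V(G)\setminus V(F)$ and the $47k$ from Sun--Das), since the margin at the left endpoint $\lambda=\tfrac n2-k$ is very thin and must be kept non-negative.
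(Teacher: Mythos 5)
Your proposal is correct, and for part (a) and the $U_1$ half of part (b) it is essentially the paper's own argument: bound $d(u)<65k$ for $u\notin V(G')$, locate the maximum coordinate $x_z=1$ inside $V(F)$ via the degree bound $\tfrac25 n+50k<\lambda$, and compare $N_{U_2}(v)$ with $N_{U_2}(z)$ (equivalently with $\sigma_2$) to get $x_v\ge 1-\frac{n/5+50k}{\lambda}$ on $U_1$. The genuine divergence is the $U_2$ step. The paper closes it in one line: since $G$ is $C_{2k+1}$-free and $n\ge 712k\ge 14k+7$, the spectral Tur\'an theorem for odd cycles (the Zhai--Lin result cited in the introduction) gives $\lambda(G)\le\lambda(T_{n,2})\le n/2$, whence $x_v\ge \frac{13n/71}{\lambda}\ge\frac{26}{71}$ immediately. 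You instead avoid that external theorem and derive the weaker bound $\lambda(G)\le\frac n2+47k$ from Lemma \ref{lem-Sun-Das-deletion} applied to the deletions defining $F$, plus $\lambda(F)\le\sqrt{|U_1||U_2|}\le n/2$; this keeps the step self-contained (Sun--Das is already a lemma of the paper) but forces your two-endpoint optimization of $f(\lambda)$ over $(\tfrac n2-k,\tfrac n2+47k]$. That optimization does go through: $f$ is concave in $1/\lambda$, and at $n=712k$ one gets $f(\tfrac n2-k)\approx 0.3675$ and $f(\tfrac n2+47k)\approx 0.3693$, both above $\tfrac{26}{71}\approx 0.3662$, with both endpoint values increasing in $n$. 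Two caveats. First, your constant $47k$ is not free: each deleted vertex has degree less than $\tfrac{2n}5+50k$ in $G$ by Lemma \ref{lem-k-dense-bi-sub}(a), so $\lambda^2(G)\le\lambda^2(F)+40kn+5000k^2\le\bigl(\tfrac n2+47k\bigr)^2$ once $n\ge 399k$; this justification matters, because with the crude bound $d(v_i)\le n$ you would only get $\lambda\le\tfrac n2+100k$, and then the right endpoint drops to about $0.361<\tfrac{26}{71}$ and the argument fails. Second, ``exactly when $n\ge 712k$'' overstates things --- your chain of estimates holds for all $n\ge 712k$ (which is what is needed), but its true breakdown point is slightly below $712k$, so the threshold is not pinned down by this computation.
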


\begin{proof}[Proof of Claim \ref{Clm-vector-size}]
By Lemma \ref{lem-k-dense-bi-sub}, for every $u\in V(G)\setminus V(G')$, we have
\[ |N_{G}(u)| < 55k +10k =65k.\]
Then
\[ \lambda(G)x_{u} =\sum_{w\in N_G(u)} x_w < 65k. \]
Recall that $\lambda (G)> \frac{n}{2} -k$.
This implies that $x_{u} < \frac{65k}{n/2 -k} \le \frac{13}{71}$ since $n\ge 712 k$.

In what follows, we prove the part (b).
Let $z \in V(G)$ be a vertex such that $x_z$ is the maximum coordinate of the Perron vector $\bm{x}$, i.e.,  $x_z=1$.
 It is easy to verify that \( z \in V(F) \). {Otherwise,
 if $z \in V(G) \setminus V(F)$, then by the construction of $F$, we get $|N_G(z)| < \frac{2n}{5} +50k$.
 It follows that
 \[ \lambda (G) = \lambda(G) x_z = \sum_{w\in N_G(z)} x_w < \frac{2}{5}n + 50k , \]
 which contradicts with \( \lambda(G) > \frac{1}{2}n - k \) since $n\ge 712k$. Recall that $F$ is a bipartite subgraph of $G$ and $V(F)=U_1\sqcup U_2$ is the bipartition of $F$.
Without loss of generality, we may assume that \( z \in U_1 \).

{\bf Step 1.} For every \( v \in U_1 \), we will show that $x_v \ge \frac{65}{142}$.

Since $|N_{U_2}(z)|\ge \frac{2n}{5}$ and $|U_2|< \frac{3n}{5}$, we have
\[ |N_{U_2}(z)| - |N_{U_2}(v) \cap N_{U_2}(z)| =
{-|N_{U_2}(v)|} + |N_{U_2}(v) \cup N_{U_2}(z)| < \frac{n}{5}. \]
Therefore, we get
\begin{align*}
    \lambda(G)x_v - \lambda (G) x_z
    &=\sum_{w\in N_G(v)}x_w - \sum_{w\in N_G(z)}x_w \\
    &\ge -\sum_{w\in N_{U_2}(z)\setminus N_{U_2}(v)} x_w -
    \sum_{w\in V(G)\setminus V(F)} x_w\\
    &\geq - (|N_{U_2}(v)| - |N_{U_2}(v) \cap N_{U_2}(z)|) - 50k \\
    &\geq -\frac{n}{5} - 50k.
\end{align*}
{Since $n\ge 712k$,} it follows that for every $v\in U_1$,
\[ x_v\geq x_z-\frac{n/5 + 50k}{\lambda(G)}\geq
 1- \frac{n+250k}{5(n/2 -k)} \ge  \frac{65}{142}. \]

{\bf Step 2.} For every \( v \in U_2 \), we will prove that $x_v \ge \frac{26}{71}$.

By the definition of $F$, we have
$d_{U_1}(v) \ge \frac{2n}{5}$. By the above argument, we can see that $x_w \ge \frac{65}{142}$ for every $w\in U_1$.
Therefore, we get
$$
\lambda(G)x_v =\sum_{w\in N_G(v)} x_w \geq \sum_{w\in N_{U_1}(v)}x_w\geq \frac{65}{142}\cdot \frac{2n}{5}= \frac{13n}{71}.
$$
Since $G$ is $C_{2k+1}$-free, we have $\lambda (G)\le \lambda (T_{n,2}) \le \frac{n}{2}$.
It follows that $x_v\geq \frac{26}{71}$.}
\end{proof}

 Let $B$ be the block of $G$ that contains $G'$ as a subgraph. Our goal is to prove that $B$ must be bipartite. Suppose on the contrary that $B$ is not bipartite.
By Lemma \ref{lem-bad-for-H}, there exists a bad path $P$ for $G'$. Without loss of generality,
we may assume that $|V(P)|$ is minimal among all such paths. We denote by $V(P)\cap V(G')=\{u,v\}$, $P'=P\setminus\{u,v\}$ and $|V(P)|=h$.

\begin{claim} \label{clm-partition}
 We have $V(G)=V(G')\sqcup V(P')$.
 \end{claim}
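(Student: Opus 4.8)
The plan is to mirror the edge-version argument of Claim~\ref{clm-4-1-part} in the proof of Theorem~\ref{thm-chromatic-edges-version}, but to replace the edge-count comparison by a spectral comparison through the Rayleigh quotient. Suppose for contradiction that there is a vertex $w\in V(G)\setminus(V(G')\cup V(P'))$. Fixing any vertex $z\in U_1$, I would perform the same local rewiring
$$G^*:=G-\{wu:u\in N(w)\}+\{wu:u\in N_{U_2}(z)\},$$
so that the neighbourhood of $w$ becomes exactly $N_{U_2}(z)$. It then suffices to establish three facts: (i)~$\lambda(G^*)>\lambda(G)$; (ii)~$G^*$ is still $C_{2k+1}$-free; and (iii)~$G^*\notin\mathcal{G}_{n,r}$. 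Together with the maximality of $\lambda(G)$ these give a contradiction, forcing $V(G)=V(G')\sqcup V(P')$.

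For (i), I would evaluate the Rayleigh quotient of $G^*$ at the Perron vector $\bm{x}$ of $G$. Since only the edges incident to $w$ are altered,
$$\bm{x}^{T}A(G^*)\bm{x}-\bm{x}^{T}A(G)\bm{x}=2x_w\Big(\sum_{u\in N_{U_2}(z)}x_u-\sum_{u\in N(w)}x_u\Big).$$
Here $\sum_{u\in N_{U_2}(z)}x_u\ge \frac{2n}{5}\cdot\frac{26}{71}$, using $\delta(F)\ge\frac{2}{5}n$ from Lemma~\ref{lem-k-dense-bi-sub} and the lower bound $x_u\ge\frac{26}{71}$ from Claim~\ref{Clm-vector-size}(b); on the other hand $\sum_{u\in N(w)}x_u=\lambda(G)x_w<65k$, since $d(w)<65k$ for $w\notin V(G')$. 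As $\frac{2n}{5}\cdot\frac{26}{71}>65k$ whenever $n\ge 712k$, the bracket is positive. When $x_w>0$ this already yields $\lambda(G^*)>\lambda(G)$. In the degenerate case $x_w=0$ the Rayleigh quotient gives only $\lambda(G^*)\ge\lambda(G)$, but then the eigen-equation of $G^*$ at $w$ would read $\lambda(G^*)x_w=\sum_{u\in N_{U_2}(z)}x_u>0$, so $\bm{x}$ cannot be a Perron vector of $G^*$ and the inequality is again strict.

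For (ii), suppose $G^*$ contained a copy $C$ of $C_{2k+1}$. Since $G$ is $C_{2k+1}$-free and the only changed edges meet $w$, we must have $w\in V(C)$, and its two neighbours $w',w''$ on $C$ lie in $N_{U_2}(z)\subseteq U_2$. Because $d_{U_1}(w'),d_{U_1}(w'')\ge\frac{2n}{5}$, we get $|N_{U_1}(w')\cap N_{U_1}(w'')|\ge\frac{n}{5}>2k+1$, so there is a vertex $w'''\in U_1\setminus V(C)$ adjacent to both; replacing $w$ by $w'''$ produces a copy of $C_{2k+1}$ using only edges of $G$, a contradiction. For (iii), the rewiring leaves $G[V(G')\cup V(P')]$ unchanged, since $w$ lies outside this set. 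Hence it remains a $2$-connected non-bipartite subgraph of $G^*$ — it contains $G'+P$, which is $2$-connected (adding the bad path $P$ as an ear to the $2$-connected graph $G'$) and non-bipartite (the bad path closes an odd cycle together with a suitable path in $G'$) — on at least $n-10k>r-1$ vertices. As every graph in $\mathcal{G}_{n,r}$ has all its non-bipartite blocks on at most $r-1$ vertices, we conclude $G^*\notin\mathcal{G}_{n,r}$.

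The step I expect to be the main obstacle is ensuring the strict spectral gain in (i); the delicate point is the possibility $x_w=0$ (equivalently, that $G$ need not be connected), where the Rayleigh quotient alone is not conclusive. This is precisely where the eigen-equation argument above is needed. The rest consists of direct spectral analogues of the edge-version claim, with the threshold $n\ge 712k$ chosen so that the spectral gain $\frac{2n}{5}\cdot\frac{26}{71}$ dominates the crude bound $65k$ on $\sum_{u\in N(w)}x_u$.
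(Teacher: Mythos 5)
Your proposal is correct and takes essentially the same route as the paper's own proof: the identical rewiring $G^*:=G-\{wu:u\in N(w)\}+\{wu:u\in N_{U_2}(z)\}$, the same Rayleigh-quotient comparison using Claim \ref{Clm-vector-size} together with $d(w)<65k$, the same vertex-swap argument ($w\mapsto w'''$) for $C_{2k+1}$-freeness, and the same observation that $G^*[V(G')\cup V(P')]$ is an unchanged, large, $2$-connected non-bipartite subgraph forcing $G^*\notin\mathcal{G}_{n,r}$. Your additional handling of the degenerate case $x_w=0$ via the eigen-equation is a careful refinement that the paper's proof tacitly skips (its final strict inequality assumes $x_w>0$), but it does not alter the substance of the argument.
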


 \begin{proof}[Proof of Claim \ref{clm-partition}]
If \( V(G) \setminus (V(G')\cup V(P')) \neq \emptyset \), then we can choose a vertex  \( w \in V(G)\setminus  (V(G') \cup V(P')) \) and fix a vertex \( z \in U_1 \).  By Lemma \ref{lem-k-dense-bi-sub}, we have $|N_{G}(w)| < 55k +10k =65k$.
Define
\[ G^* := G - \{wu : u \in N(w)\} + \{wu : u \in N_{U_2}(z)\} . \]
By Claim \ref{Clm-vector-size} and $n\ge 444k$, we have
\begin{align*}
 \lambda(G^*)-\lambda(G) &\ge \frac{2}{{\bm{x}^T}{\bm{x}}}
    \left(\sum_{ij\in E(G^*)} x_ix_j - \sum_{ij\in E(G)} x_ix_j  \right)\\
    &=\frac{2}{{\bm{x}^T}{\bm{x}}}
    \left(\sum_{u\in N_{U_2}(z)} x_wx_u - \sum_{u\in N(w)} x_wx_u  \right)\\
    &\geq \frac{2x_w}{{\bm{x}^T}{\bm{x}}}
    \left(\frac{2}{5}n \cdot \frac{26}{71}-65k  \right)\\
    &>0.
\end{align*}
First, $G^*$ does not contain $C_{2k+1}$. Suppose in contradiction that $G^*$ contains a copy of $C_{2k+1}$, denoted by $C$. Clearly $w \in V(C)$. Let $w', w''$ be the two vertices adjacent to $w$ in $C$, where $w', w'' \in U_2$. Since $d_{U_1}(w'), d_{U_1}(w'') \geq 2n/5$, we have $|N_{U_1}(w') \cap N_{U_1}(w'')| \geq n/5$. Therefore, we can find a vertex $w''' \in U_1$ outside $V(C)$ that is adjacent to both $w'$ and $w''$. Then $(V(C) \setminus \{w\}) \cup \{w'''\}$ would form a $C_{2k+1}$ in $G$, leading to a contradiction. Second, $G^* \notin \mathcal{G}_{n,r}$. Indeed, for any graph in $\mathcal{G}_{n,r}$, the number of vertices of its non-bipartite blocks cannot exceed $r-1$. However, in $G^*$, the vertex set $V(G') \cup V(P')$ forms a $2$-connected non-bipartite subgraph of $G^*$ with $|V(G') \cup V(P')| > n - 10k > r$, which implies $G^* \notin \mathcal{G}_{n,r}$. This contradicts the maximality of \( G \). Therefore, we conclude that $V(G)=V(G')\sqcup V(P')$.
\end{proof}

{\bf{Case 1.}}  $h\leq 2k-2$.

Since $G'$ is $k$-dense bipartite subgraph,  no matter whether $v$ and $u$ are in the same part, there is a path from $v$ to $u$ with order $2k+3-h$ in $G'$. Therefore, there exists a copy of $C_{2k+1}$ in $G$, a contradiction.

 {\bf{Case 2.}} $h=2k-1$.

 We may assume that $v\in V_1$ and $u\in V_2$. Then $e(N_{V_2}(v),N_{V_1}(u))=0$ {(Otherwise, there would be a $C_{2k+1}$.)}. Note that $d_{V_1}(u),d_{V_2}(v)\geq 55k$. By Lemma \ref{lem-Sun-Das-deletion}, we have
$$
e(G-P')\leq \left\lfloor\frac{(n-2k+1)^2}{4}\right\rfloor-d_{V_1}(u)\cdot d_{V_2}(v)+d_{V_1}(u)+d_{V_2}(v),
$$
and
 \[ \sum_{w\in P'}d(w) \le 65k(2k-3)  .\]
Recall that $G'=G-P'$ is a bipartite graph, then $\lambda(G-P')\leq \sqrt{e(G-P')}$. By Lemma \ref{lem-Sun-Das-deletion}, we have
 \begin{align*}
     \lambda^2(G)&\leq \lambda^2(G-P')+2\sum_{w\in P'}d(w)\\
     &\leq \left\lfloor\frac{(n-2k+1)^2}{4}\right\rfloor-d_{V_1}(u)\cdot d_{V_2}(v)+d_{V_1}(u)+d_{V_2}(v)+130k(2k-3)\\
     &\leq \left\lfloor\frac{(n-2k+1)^2}{4}\right\rfloor-(1-55k)^2+130k(2k-3)+1\\
     &<\left\lfloor\frac{(n-2k+1)^2}{4}\right\rfloor,
 \end{align*}
which contradicts with (\ref{eq-spectral-ge}).

In the sequel, we consider the case \( h \geq 2k \).
In this case, we will construct a graph \( G^* \in \mathcal{G}^*_{n,t} \) where \( t \geq 2k \), and we will demonstrate that \( \lambda(G) < \lambda(G^*) \). By Lemmas \ref{lem-spectral-order1} and \ref{lem-spectral-order2}, we have $\lambda(G) < \lambda(G^*) \leq \lambda(T_{n-r+1,2} \circ K_r) $,
which leads to a contradiction with (\ref{eq-spectral-ge}).
We divide the remaining proof into the following cases.

{\bf{Case 3.}} $h=2k$.

 We may assume that $P=v_1v_2\cdots v_{2k-1}v_{2k}$, where $v_1=v$, $v_{2k}=u$ and $v,u\in V_1$. By the minimality of $|V(P)|$, we have $N_{G'}(v_i)\subseteq \{v_1,v_{2k}\}$ and $d_{G'}(v_i)\leq 1$ for each $i\in[3,2k-2]$.

 For $i,j\in[2k]$, if $|i-j|\geq 3$ and $|i-j|$ is odd, then $v_iv_j\notin E(G)$. Observe that $N_{G'}(v_2),N_{G'}(v_{2k-1})\subseteq V_1$ {(Otherwise, there would be a bad path shorter than $P$)}. We denote by $N:=N_{V_1}(v_2)\cup N_{V_1}(v_{2k-1})\setminus\{v_1,v_{2k}\}=\{w_1,w_2,\ldots,w_s\}$.
For any $w\in N_{V_1}(v_2)$ and $w'\in N_{V_1}(v_{2k-1})$, we have $N_{V_2}(w)\cap N_{V_2}(w')=\emptyset$ {(Otherwise, there would be a $C_{2k+1}$.)}.

 Without loss of generality, we assume that $x_{v_1}\geq x_{v_{2k}}$.   Since $d_{G'}(v_1),d_{G'}(v_{2k})\geq 55k$, $N(v_1)\cap F\neq\emptyset$ and $N(v_2)\cap F\neq \emptyset$. By Claim \ref{Clm-vector-size}, we have
 \[ {\min \left\{\sum_{w\in N_{G'}(v_1)}x_w,  \sum_{w\in N_{G'}(v_{2k})}x_w \right\}>x_{v_{2k-1}}+x_{v_{2}}}. \]
 Without loss of generality, we assume that $\min\Big\{\sum\limits_{w\in N_{G'}(v_1)}x_w,  \sum\limits_{w\in N_{G'}(v_{2k})}x_w \Big\}
 =\sum\limits_{w\in N_{G'}(v_{2k})}x_w$.

To deduce a contradiction, we construct the following graph:
 \begin{align*}
     G^*&:=G-\{v_{2k}w: w\in N_{G'}(v_{2k})\}-\{wv':w\in\{v_2,v_{2k-1}\}, \ v'\in N\} \\
 &\quad +\{v'w: w\in N\cup\{v_1\}, \ v'\in N_{G'}(v_1)\cup N_{G'}(v_{2k})\}.
 \end{align*}
Applying the Rayleigh quotient, we get
\begin{align*}
    &\lambda(G^*)-\lambda(G) \ge \frac{2}{{\bm{x}^T}{\bm{x}}}
    \left(\sum_{ij\in E(G^*)} x_ix_j - \sum_{ij\in E(G)} x_ix_j  \right)
    \\ &\geq\frac{2}{{\bm{x}^T}{\bm{x}}}\left(
    \sum_{w\in N}x_w\sum_{w'\in N_{G'}(v_{2k})} x_{w'}+\sum_{w'\in N_{G'}(v_{2k})}x_{v_1}x_{w'}
    - \sum_{w\in N}(x_{v_2}+x_{v_{2k-1}})x_w-\sum_{w'\in N_{G'}(v_{2k})}x_{v_{2k}}x_{w'} \right)\\
    &=\frac{2}{{\bm{x}^T}{\bm{x}}}\left(
    \sum_{w\in N}x_w\left(\sum_{w'\in N_{G'}(v_{2k})} x_{w'}-(x_{v_2}+x_{v_{2k-1}})\right)+\sum_{w'\in N_{G'}(v_{2k})}(x_{v_1}-x_{v_{2k}})x_{w'} \right)\\
    &>0.
\end{align*}
We conclude that $\lambda (G)< \lambda (G^*)$.
From the definition, we see that $G^*\in \mathcal{G}^*_{n,2k+1}$.
By Lemmas \ref{lem-spectral-order1} and \ref{lem-spectral-order2}, we have $\lambda(G^*)<\lambda(T_{n-2k+1,2}\circ K_{2k})\leq\lambda(T_{n-r+1,2}\circ K_r)$.
Therefore, we get $\lambda (G)< \lambda(T_{n-r+1,2}\circ K_r)$, which is a contradiction.

 {\bf{Case 4.}} $h=2k+1$.

We may assume that $P=v_1v_2\cdots v_{2k+1}$, where $v_1=v$, $v_{2k+1}=u$ and $v\in V_1$, $u\in V_2$. By the minimality of $|V(P)|$, we have $N_{G'}(v_i)\subseteq \{v_1,v_{2k+1}\}$ and $d_{G'}(v_i)\leq 1$ for each $i\in[3,2k-1]$.
Clearly, we have $N_{G'}(v_2)\in V_1$, $N_{G'}({v_{2k}})\subseteq V_2$ and $e(N_{V_1}(v_2), N_{V_2}(v_{2k}))=0$.

 We claim that, for each $v'\in N_{V_2}({v_{2k}})$ and $u'\in N_{V_1}(v_2)$,  we have $V_1\setminus N_{V_1}(v_2)\subseteq N_{V_1}(v')$ and $V_2\setminus N_{V_2}(v_{2k})\subseteq N_{V_2}(u')$. Otherwise, there exists a vertex $w'\in N_{V_1}(v_2)$, and $V_2\setminus N_{V_2}(v_{2k})\not\subseteq N_{V_2}(w')$. Let $w\in V_2\setminus N_{V_2}(v_{2k})$ and $w\not\sim w'$. Connect $ww'$, the resulting graph $G'$ satisfies $\lambda(G') > \lambda(G)$, and $G \notin \mathcal{G}_{n,r}$. If $G'$ contains $C_{2k+1}$, this implies that there exists a path of length $2k+1$ from $w$ to $w'$ in graph $G$. Since $w \notin N_{V_2}(v_{2k})$, it means there must exist a shorter bad path {for $G'$}, which contradicts the minimality assumption of $P$. Therefore, $G'$ does not contain $C_{2k+1}$, but this contradicts the maximality assumption of $\lambda(G)$.

 For each $v'\in N_{V_2}({v_{2k}})$ and $u'\in N_{V_1}(v_2)$, the above argument yields
 \[ \lambda(G)x_{v'}\geq \sum_{w\in V_1 \setminus N_{V_1}(v_2)} x_w \ge  \sum_{w\in V_1}x_w-55k. \]
 Similarly, we have $\lambda(G'){x_{u'}}\geq \sum_{w\in V_2}x_w-55k$. Let $z\in U_1$ and $z'\in U_2$. By Claim \ref{Clm-vector-size}, we have
 $x_z\ge \frac{26}{71} $ and $ x_{z'}\ge \frac{26}{71}$.
Moreover, we observe that $\lambda(G) x_{z'} \le \sum_{w\in V_1} x_w + 10k$. Then
 $$
 \lambda(G)x_{v'}\geq  \lambda(G)x_{z'}-{65k},
 $$
 and similarly, we have
$$
\lambda(G)x_{u'}\geq \lambda(G)x_z-{65k}.
$$
It follows that $x_{v'},x_{u'}> \frac{15}{71}>\frac{13}{71}$. Therefore, for each $v'\in N_{V_2}(v_{2k})\cup N_{V_1}(v_2)$ and $w'\in\{v_2,v_{2k}\}$, we have $x_{v'}>x_{w'}$. We may further assume that $x_{v_1}\geq x_{2k+1}$.  We denote by $P'=P\setminus \{v,u\}$. Let
\begin{align*}
     G^*&:=G-\{v_{2k}w: w\in N_{V_2}(v_{2k})\}-\{v_2w:w\in N_{V_1}(v_2)\setminus\{v_1\}\}-\{v_{2k+1}w:w\in N_{P'}(v_{2k+1})\} \\
 &\quad +\{v_{1}w:w\in N_{P'}(v_{2k+1})\}+\{v'w:v'\in N_{V_1}(v_2), \ w\in N_{V_2}(v_{2k})\}.
 \end{align*}
Similarly, we have
\begin{align*}
    &\lambda(G^*)-\lambda(G) \ge \frac{2}{{\bm{x}^T}{\bm{x}}}
    \left(\sum_{ij\in E(G^*)} x_ix_j - \sum_{ij\in E(G)} x_ix_j  \right)
    \\ &\geq\frac{2}{{\bm{x}^T}{\bm{x}}}\left( \sum_{w\in N_{P'}(v_{2k+1})}x_w(x_{v_1}-x_{v_{2k+1}})+\sum_{w\in N_{V_2}(v_{2k})}x_w(x_{v_1}-x_{v_{2k}})+\sum_{w\in N_{V_1}(v_2)\setminus\{v_1\}}x_w(x_{v_{2k+1}}-x_{v_2})\right)\\
    &>0.
\end{align*}
On the other hand, we have $G^*\in \mathcal{G}^*_{n,2k+1}$. By Lemmas \ref{lem-spectral-order1} and \ref{lem-spectral-order2}, we get $\lambda(G)<\lambda(G^*)<\lambda(T_{n-2k+1,2}\circ K_{2k})\leq \lambda(T_{n-r+1,2}\circ K_r)$, which leads to a contradiction.

{\bf{Case 5.} $2k+2\leq h\leq 10k+2$}.

We denote $s=h-2k$, then $2\leq s\leq 8k+2$. Let  $P=v_1v_2\cdots v_{2k+s-1}v_{2k+s}$, where $v_1=v$, $v_{2k+s}=u$. We now only consider the case $s$ is even and $v,u\in V_1$. The case for odd $s$ follows analogously. By the minimality of $|P|$,  we have $N_{G'}(v_i)\subseteq \{v_1,v_{2k+s}\}$ and $d_{G'}(v_i)\leq 1$ for each $i\in[3,2k+s-2]$. Let $v^*\in P'$ be a vertex such that $x_{v^*}=\max\{x_w: w\in P'\}$. Let
\begin{align*}
     G^*&=G-\{v'w: \ v'\in P', w\in N_{G'}(v') \}+\{v^*w:w\in V_1\}.
 \end{align*}
Similarly, we have
\begin{align*}
    \lambda(G^*)-\lambda(G) &\ge \frac{2}{{\bm{x}^T}{\bm{x}}}
    \left(\sum_{ij\in E(G^*)} x_ix_j - \sum_{ij\in E(G)} x_ix_j  \right)
    \\ &\geq\frac{2}{{\bm{x}^T}{\bm{x}}}\left( \sum_{w\in V_1}x_wx_{v^*}-{\sum_{v'\in P'}\sum_{w\in N_{G'}(v')}x_{v'}x_w}\right)\\
     &\geq\frac{2x_{v^*}}{{\bm{x}^T}{\bm{x}}}\left( \sum_{w\in V_1}x_w-\sum_{v'\in P'}d_{G'}(v')\right)\\
      &\geq\frac{2x_{v^*}}{{\bm{x}^T}{\bm{x}}}\left( \lambda(G)-130k\right)\\
    &>0.
\end{align*}
On the other hand, we have $G^*\in \mathcal{G}^*_{n,h-2}$. By Lemmas \ref{lem-spectral-order1} and \ref{lem-spectral-order2}, we have $\lambda(G)<\lambda(G^*)\le\lambda(T_{n-r+1,2}\circ K_{r})$, which leads to a contradiction.

In summary, we have proved that $B$ is bipartite. Then $G$ is obtained from a bipartite graph $B$ by suspending some small graphs. We denote \( t:=|V(G)\setminus V(B)|\). By Lemma \ref{lem-k-dense-bi-sub},
we have $t \leq 10k$ and $\lambda(G)\le \lambda(T_{n-t}\circ K_{t+1})$, which together with assumptions $G\notin \mathcal{G}_{n,r}$ and \( \lambda(G) \ge \lambda(T_{n-r+1,2}\circ K_r) \) yields $t=r-1$, then $\lambda(G)=\lambda(T_{n-r+1,2}\circ K_{r})$ and $G=T_{n-r+1,2}\circ K_r$. This completes the proof.
\end{proof}

 \section{Concluding remarks}

In this paper, we have investigated a structural stability result for $n$-vertex graphs without an odd cycle $C_{2k+1}$.
Consequently,
we have studied the maximum size and the maximum spectral radius over all $C_{2k+1}$-free graphs with chromatic number $\chi (G)\ge r$, where $r\le 2k$.
We proved that $T_{n-r+1,2}\circ K_r$ is the unique spectral extremal graph. This spectral stability  extends a result of Guo, Lin and Zhao \cite{GLZ2021} as well as a result of Zhang and Zhao \cite{ZZ2023}.
Moreover, our result reveals an interesting phenomenon that the spectral extremal graph for this problem is contained in
the classical edge-extremal graphs.
It would be very nice if it could be extended to the corresponding extremal problem when we forbid a general color-critical graphs.
By a result of Simonovits \cite{Sim1966},
we know that if $F$ is a color-critical graph with $\chi (F)=3$, then for sufficiently large $n$, the bipartite Tur\'{a}n graph $T_{n,2}$ is the unique $F$-free graph with the maximum number of edges. For example, we can consider the specific color-critical graphs $F$, say, the book graphs and theta graphs.
The spectral Tur\'{a}n problems of these two graphs were recently studied by Zhai and Lin \cite{ZL2022jgt}. We refer the readers to \cite{LM2025,FL2025} and references therein.
In general, for a color-critical graph $F$ with $\chi (F)=r+1$,
we can naturally study the spectral Tur\'{a}n type problem for the $F$-free graphs $G$ with chromatic number $\chi (G)\ge k$ for every integer $k\ge r+1$.

\appendix
\section{Appendix}

\label{App-A}

Given a graph G, the vertex partition $\Pi: V(G)=V_1\cup V_2\cup\cdots\cup V_k$ is said to be an equitable partition if, for each $u\in V_i,|V_j\cap N(u)|=b_{ij}$ is a constant depending only on $i,j$ $(1\leq i,j\leq k)$. The matrix $B_\Pi=[b_{ij}]_{i,j=1}^k$ is called the quotient matrix of $G$ with respect to $\Pi$.

\begin{lemma}[See \cite{CRS}]\label{lem-eq-partition}
    Let $\Pi\colon V(G) = V_1\cup V_2\ldots\cup V_k$ be an equitable partition of $G$ with quotient matrix $B_{\Pi}$. Then the largest eigenvalue of $B_{\Pi}$ is the spectral radius of $G$.
\end{lemma}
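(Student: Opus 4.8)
The plan is to establish the two inequalities $\rho(B_\Pi)\le \lambda(G)$ and $\lambda(G)\le \rho(B_\Pi)$ separately, where $\rho(B_\Pi)$ denotes the largest (Perron) eigenvalue of $B_\Pi$. Since $B_\Pi$ is entrywise non-negative (each $b_{ij}=|N(u)\cap V_j|\ge 0$), the Perron--Frobenius theorem guarantees that $\rho(B_\Pi)$ is a genuine real eigenvalue admitting a non-negative eigenvector, so this reading of ``largest eigenvalue'' is legitimate. The central device throughout is a \emph{lifting} operation: given any vector $\bm{y}=(y_1,\dots,y_k)^T\in\mathbb R^k$, I define $\bm{z}\in\mathbb R^n$ by setting $z_v=y_i$ for every $v\in V_i$. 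The equitability of $\Pi$ is precisely the hypothesis that makes $A(G)$ interact cleanly with this lift.

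First I would record the key computation. Fix $i$ and take any $u\in V_i$. Grouping the neighbours of $u$ according to the parts they lie in and using $|N(u)\cap V_j|=b_{ij}$ (a constant on $V_i$ by the definition of an equitable partition), one gets
\[
(A(G)\bm{z})_u=\sum_{v\in N(u)}z_v=\sum_{j=1}^k b_{ij}\,y_j=(B_\Pi\bm{y})_i .
\]
Hence if $\bm{y}$ is an eigenvector of $B_\Pi$ with eigenvalue $\mu$, then $(A(G)\bm{z})_u=\mu y_i=\mu z_u$ for every $u$, so $\bm{z}$ is an eigenvector of $A(G)$ for the same eigenvalue $\mu$. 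In particular every eigenvalue of $B_\Pi$ lies in the spectrum of $A(G)$; applying this to $\mu=\rho(B_\Pi)$ yields $\rho(B_\Pi)\le\lambda(G)$, since $\lambda(G)$ is the largest eigenvalue of $A(G)$.

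For the reverse inequality I would exploit positivity. Assuming $G$ connected (the graphs of interest here are connected, and the general case reduces to the components), the matrix $B_\Pi$ is irreducible: the identity $b_{ij}|V_i|=e(V_i,V_j)=b_{ji}|V_j|$ shows $b_{ij}>0\iff b_{ji}>0$, so the associated ``quotient graph'' is undirected and inherits connectedness from $G$. By Perron--Frobenius the Perron eigenvector $\bm{y}>0$ of $B_\Pi$ for $\rho(B_\Pi)$ is strictly positive, whence its lift $\bm{z}$ is a strictly positive eigenvector of the irreducible non-negative matrix $A(G)$. Since an irreducible non-negative matrix admits a positive eigenvector only for its spectral radius, the eigenvalue of $\bm{z}$ must equal $\lambda(G)$, giving $\rho(B_\Pi)=\lambda(G)$.

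The step I expect to be the only genuine obstacle is this reverse inequality, i.e.\ certifying that $\lambda(G)$ is actually \emph{attained} by $B_\Pi$ rather than merely bounding it from below. The clean route above hinges on the irreducibility of $B_\Pi$ together with the Perron--Frobenius uniqueness of positive eigenvectors. An equally valid alternative, which sidesteps discussing irreducibility, is to start instead from the Perron vector $\bm{x}$ of $G$, form the part-averages $y_i=|V_i|^{-1}\sum_{v\in V_i}x_v$, and verify directly, using $b_{ij}|V_i|=b_{ji}|V_j|$, that $B_\Pi\bm{y}=\lambda(G)\bm{y}$ with $\bm{y}\ne 0$; then $\lambda(G)$ is an eigenvalue of $B_\Pi$ and so $\lambda(G)\le\rho(B_\Pi)$, which combined with the first inequality forces equality. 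Either way the substance is identical, and nothing beyond the Perron--Frobenius theorem is needed.
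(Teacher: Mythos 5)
The paper does not actually prove this lemma: it is imported verbatim from the textbook of Cvetkovi\'{c}, Rowlinson and Simi\'{c} \cite{CRS}, so there is no in-paper argument to compare yours against. Your proof is correct, and it is essentially the standard argument behind the cited result. The lifting computation $(A(G)\bm{z})_u=(B_\Pi\bm{y})_i$ is right and immediately gives that every eigenvalue of $B_\Pi$ is an eigenvalue of $A(G)$, hence $\rho(B_\Pi)\le\lambda(G)$; for the converse both of your routes are sound. Two small remarks. In Route A, the phrase ``the general case reduces to the components'' deserves one more line: a part of $\Pi$ may straddle several components, but its intersection with each component $G_c$ is again a part of an equitable partition of $G_c$ whose quotient matrix is the principal submatrix of $B_\Pi$ indexed by the parts meeting $G_c$, so $\rho(B_\Pi)\ge\rho\bigl(B^{(c)}\bigr)=\lambda(G_c)$ for every $c$, which is what the reduction needs. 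Route B is cleaner precisely because it requires no connectivity at all: taking for $\bm{x}$ a non-negative eigenvector of $A(G)$ associated with $\lambda(G)$ (which exists even when $G$ is disconnected), the part-averages $y_i$ are non-negative and not all zero, and the identity $b_{ij}|V_i|=b_{ji}|V_j|$ yields $B_\Pi\bm{y}=\lambda(G)\bm{y}$, so $\lambda(G)\le\rho(B_\Pi)$. In the paper's application the graphs in question are connected, so either route suffices; nothing beyond Perron--Frobenius is needed, exactly as you say.
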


\subsection{Proof of Lemma \ref{lem-spectral-order1}}

\begin{proof}[{\bf Proof of Lemma \ref{lem-spectral-order1}}]
    Recall that $\mathcal{G}_{n,r}^*$ is the family of graphs that are obtained from a bipartite graph $B$ by suspending graphs $G_1,G_2,\ldots ,G_s$ for some $s\in \mathbb{N}$ such that $\sum_{i=1}^s |V(G_i)\setminus V(B)| = r-2$.
    We may assume that \( G \) is a graph that  achieves the maximum spectral radius over all graphs of $\mathcal{G}_{n,r}^*$.
    Our goal is to prove that $G$ is the desired extremal graph $T_{n-r+2,2} \circ K_{r-1}$.

    {We claim that $s=1$.
    Indeed, we can emerge the suspensions into a large suspension, which increases the spectral radius of $G$.
    Assume on the contrary that $G_1$ and $G_2$ are two graphs that are suspended on a bipartite graph $B$.
    Let $v_1$ and $v_2$ be the intersection vertices of $G_1$ and $G_2$, respectively.
    Let $\bm{x}=(x_1,x_2,\ldots ,x_n)^T$ be the Perron vector corresponding to $\lambda (G)$.
    Without loss of generality, we may assume that $x_{v_1}\le x_{v_2}$. We consider the following graph:
    \[  G':= G- \{v_1u: u\in N_{G_1}(v_1)\} + \{v_2u : u\in N_{G_1}(v_1)\}.  \]
    In other words, we transfer the neighbors of $v_1$ in $G_1$ to the vertex $v_2$, i.e., emerging $G_1$ to $G_2$. Clearly, we have $G'\in \mathcal{G}_{n,r}^*$.
    By Lemma \ref{lem-WXH}, we get $\lambda (G)< \lambda (G')$, which is a contradiction. Therefore, there is exactly one graph $G_1$ that is suspended to $B$.
    By the maximality of the spectral radius of $G$,
    we find that $G_1$ is a clique of order $r-1$ and $B$ is a complete bipartite graph. }

    Next, we show that $B=T_{n-r+2,2}$.
   Let $V(B):= B_1\sqcup B_2$ be a bipartition  of vertices of $B$, and $u^*=V(B)\cap V(G_1)$. We may assume that $u^*\in B_1$.  Let $B_1'=B_1\setminus\{u^*\}$, $G_1'=G_1\setminus\{u^*\}$, $|B_1'|=a$ and $|B_2|=b$. Moreover, we have $a+b=n-r+1$.
   Let $\Pi:V(G)=B_1'\cup B_2\cup V(G_1')\cup \{u^*\}$ be an vertex partition of $G$. Obviously, $\Pi$ is an equitable partition of $G$ with quotient matrix
$$B_{\Pi}=\begin{bmatrix}0&b&0&0\\a&0&0&1\\0&0&r-3&1\\ 0&b&r-2&0\end{bmatrix}.
   $$
By Lemma \ref{lem-eq-partition}, we know that $\lambda(G)$ is  the largest root of
\[ f_{a,b}(x):=x^4-(r-3)x^3-((a+1)b+r-3)x^2+(a+1)b(r-3)x+ab(r-2). \]
Similarly, we obtain that $\lambda(T_{n-r+2,2}\circ K_{r-1})$ is the largest root of $f_{\lfloor\frac{n-r+2}{2}\rfloor-1,\lceil\frac{n-r+2}{2}\rceil}(x)$.
For notational convenience,
we denote $\theta = \lambda (T_{n-r+2,2}\circ K_{r-1})$ and $g(x):=f_{\lfloor\frac{n-r+2}{2}\rfloor-1,\lceil\frac{n-r+2}{2}\rceil}(x)$. Then
\begin{align*}
    f_{a,b}(x)-g(x)&=\left(\left\lfloor\frac{(n-r+2)^2}{4}\right\rfloor-(a+1)b\right)x^2-\left(\left\lfloor\frac{(n-r+2)^2}{4}\right\rfloor-(a+1)b\right) (r-3) x\\
    & \ \ \ \ -\left(\left\lfloor\frac{(n-r+2)^2}{4}\right\rfloor-\left\lceil\frac{n-r+2}{2}\right\rceil -ab\right) (r-2)\\
    &=\left(\left\lfloor\frac{(n-r+2)^2}{4}\right\rfloor-(a+1)b\right)(x-r+2)(x+1)+\left(\left\lceil\frac{n-r+2}{2}\right\rceil-b\right) (r-2).
\end{align*}
Since $a+1+b=n-r+2$, we get $\lfloor \frac{(n-r+2)^2}{4}\rfloor \ge (a+1)b$. Note that $\theta = \lambda (T_{n-r+2,2}\circ K_{r-1}) \ge  \lfloor \frac{n-r+2}{2}\rfloor > r-1$. Then
$(\theta -r+2)(\theta +1)> r-2$.
Therefore, we get
\begin{align*}
    f_{a,b}(\theta) - g(\theta)
    &=\left(\left\lfloor\frac{(n-r+2)^2}{4}\right\rfloor-(a+1)b\right)(\theta -r+2)(\theta +1)+
    \left(\left\lceil\frac{n-r+2}{2}\right\rceil-b\right) (r-2) \\
    &\ge \left(\left\lfloor\frac{(n-r+2)^2}{4}\right\rfloor-(a+1)b +\left\lceil\frac{n-r+2}{2}\right\rceil-b \right) (r-2) \\
    &\ge 0.
\end{align*}
It follows that $f_{a,b}(\theta)\ge 0$ and
$\lambda(G) \leq \lambda(T_{n-r+2,2}\circ K_{r-1})$. Moreover, the equality holds if and only if $a=\lfloor\frac{n-r+2}{2}\rfloor-1$ and $b=\lceil\frac{n-r+2}{2}\rceil$, that is, $G=T_{n-r+2,2}\circ K_{r-1}$. This completes the proof.
\end{proof}

\subsection{Proof of Lemma \ref{lem-spectral-order2}}

\begin{proof}[{\bf Proof of Lemma \ref{lem-spectral-order2}}]
We denote by $G=T_{n-s+2,2}\circ K_{s-1}$, which is
the graph obtained by identifying a vertex of $K_{s-1}$ and a vertex of $T_{n-s-2,2}$ belonging to the partite set with smaller size. Let $B=T_{n-s+2,2}$ and $A=K_{s-1}$.  Let $V(B)= B_1\sqcup B_2$ be a bipartition  of vertices of $B$, where $|B_1|\leq |B_2|$.
    Let $\bm{x} = ( x_1, \ldots , x_n)^{T}$ be the Perron vector of $G$. By scaling, we may assume that $\max\{x_v:v\in V(G)\}=1$.
    Let $v^*\in B_1$ be the vertex of $V(A)\cap V(B)$.
    It is easy to see that $x_{v^*}=\max\{x_v:v\in V(G)\}=1$.
    We now select \( s - r \) distinct vertices from the set \( A \setminus \{v^*\} \), which we denote by \( T' \). Let
    \[ G' := G - \{vu : v \in T', u \in N(v)\} + \{vu : v \in T', u \in B_2\}. \]
    Note that $\lambda(G)>\frac{n-s}{2}\geq 45k$ and
    \[ \sum_{u\in B_2}x_u\geq \lambda(G)x_{v^*}- (s-2)\ge \lambda(G)-10k. \]
     Therefore, we get
\begin{align*}
    \lambda(G')-\lambda(G)&\geq \frac{2}{{\bm{x}^T}{\bm{x}}}\left(\sum_{v\in T'}\sum_{u\in B_2}x_vx_u-\sum_{v\in T'}\sum_{u\in N(v)}x_vx_u\right)\\
    &\geq \frac{2}{{\bm{x}^T}{\bm{x}}}\left((\lambda(G)-10k)\sum_{v\in T'}x_v-10k\sum_{v\in T'}x_v\right)\\
    &=\frac{2}{{\bm{x}^T}{\bm{x}}}\left((\lambda(G)-20k)\sum_{v\in T'}x_v\right)\\
    &>0.
\end{align*}
Moreover, we see that \( G' \in \mathcal{G}^*_{n,r} \). By Lemma \ref{lem-spectral-order1}, we conclude
\[
\lambda(G) < \lambda(G') \leq \lambda(T_{n-r+2,2} \circ K_{r-1}),
\]
which completes the proof.

\end{proof}

\end{document}